\def\a{\alpha}
\def\e{\epsilon}
\def\d{\delta}
\def\g{\gamma}
\def\o{\omega}
\def\s{\sigma}
\def\n{\nabla}
\def\wh{\widehat}
\def\wt{\widetilde}
\def\tsy{\textstyle}
\newcommand{\tsum}{\textstyle{\sum}}
\def\sm{\textstyle{\tfrac{1}{m}\sum_{i=1}^m}}
\def\smn{\textstyle{\sum}_{i=1}^m}
\def\sk{\textstyle{\sum}_{t=1}^s}
\def\skt{\textstyle{\sum}_{t=2}^s}
\def\sl{\textstyle{\sum}_{\ell=1}^k}
\def\sll{\textstyle{\sum}_{\ell=2}^k}
\def\slk{\textstyle{\sum}_{\ell=1}^{k-1}}
\def\R{\mathbb{R}}
\def\M{\mathcal{M}}
\def\O{\mathcal{O}}
\def\la{\langle}
\def\ra{\rangle}
\def\lla{\left\langle}
\def\rra{\right\rangle}
\def\ux{\underline{x}}
\def\bx{\bar{x}}
\def\Bx{{\textbf{x}}}
\def\bBx{\bar{\textbf{x}}}
\def\eBx{\emph{\textbf{x}}}
\def\by{\bar{y}}
\def\l{\ell}
\def\lbd{\lambda}
\def\Eb{\mathbb{E}}
\def\bfA{\mathbf{A}}
\def\bfb{\mathbf{b}}
\DeclareMathOperator*{\argmin}{arg\,min}
\DeclareMathOperator*{\argmax}{arg\,max}
\newcommand\myeqa{\stackrel{\mathclap{\scriptsize\mbox{(a)}}}{=}}
\newcommand\myeqb{\stackrel{\mathclap{\scriptsize\mbox{(b)}}}{=}}
\newcommand\myeqc{\stackrel{\mathclap{\scriptsize\mbox{(c)}}}{=}}
\newcommand\myleqa{\stackrel{\mathclap{\scriptsize\mbox{(a)}}}{\le}}
\newcommand\myleqb{\stackrel{\mathclap{\scriptsize\mbox{(b)}}}{\le}}
\newcommand\myleqc{\stackrel{\mathclap{\scriptsize\mbox{(c)}}}{\le}}
\newcommand\mygeqa{\stackrel{\mathclap{\scriptsize\mbox{(a)}}}{\ge}}
\newcommand\mygeqb{\stackrel{\mathclap{\scriptsize\mbox{(b)}}}{\ge}}
\newcommand\mygeqc{\stackrel{\mathclap{\scriptsize\mbox{(c)}}}{\ge}}
\newcommand{\beq}{\begin{equation}}
\newcommand{\eeq}{\end{equation}}
\newcommand{\nn}{\nonumber}
\def\vgap{\vspace*{.1in}}
\newtheorem{remark}[theorem]{Remark}
\title{ Accelerated  Stochastic Algorithms for Nonconvex\\
 Finite-sum and Multi-block  Optimization\thanks{
This work was funded by National Science Foundation grants CMMI-1637473, CMMI-1637474 and Army Research Office W911NF-18-1-0223.}}
\author{
	Guanghui Lan
	\thanks{H. Milton Stewart School of Industrial \& Systems Engineering, Georgia Institute of Technology, Atlanta, GA, 30332 .
		(email: {\tt george.lan@isye.gatech.edu}).}
	\and
	Yu Yang
	\thanks{H. Milton Stewart School of Industrial \& Systems Engineering, Georgia Institute of Technology, Atlanta, GA, 30332 .
		(email: {\tt yangyu@gatech.edu}).} 
}
\begin{document}
\maketitle  
\begin{abstract}
		In this paper, we present new stochastic methods for solving two important classes of nonconvex optimization
		problems. We first introduce a randomized accelerated proximal gradient (RapGrad) method for 
		solving a class of nonconvex optimization problems whose objective function consists of the summation of $m$ components, 
		and show that it can significantly
		reduce the number of gradient computations especially when the condition number $L/\mu$ (i.e.,
		the ratio between the Lipschitz constant and negative curvature) is large. More specifically,
		RapGrad can save up to ${\cal O}(\sqrt{m})$ gradient computations than existing batch nonconvex accelerated gradient methods.
		Moreover, the number of gradient computations required by RapGrad can be ${\cal O}(m^\frac{1}{6} L^\frac{1}{2} / \mu^\frac{1}{2})$ (at least ${\cal O}(m^\frac{2}{3})$) 
		times smaller than the best-known randomized nonconvex gradient methods when $L/\mu \ge m$.
		Inspired by RapGrad, we also develop a new randomized accelerated proximal dual (RapDual) method
		for solving a  class of multi-block nonconvex optimization problems coupled with linear constraints and some special structural 
		properties. 
		We demonstrate that RapDual can also save up to a factor of ${\cal O}(\sqrt{m})$ block updates than its
		batch counterpart, where $m$ denotes the number of blocks.
		To the best of our knowledge, all these complexity results associated with RapGrad and RapDual seem
		to be new in the literature. We also illustrate potential advantages of these algorithms through our preliminary numerical experiments.
		
		\vspace{.1in}
		
		\noindent {\bf Keywords:} nonconvex optimization, stochastic algorithms, acceleration, finite-sum optimization, multi-block optimization.
	\end{abstract}	 
	\vspace{.07in}

\noindent {\bf AMS 2000 subject classification:} 90C25, 90C06, 90C22, 49M37
\section{Introduction}   
Nonconvex optimization plays a fundamental role in modern statistics and machine learning, e.g.,
for empirical risk minimization with either nonconvex loss (\cite{tibshirani2013introduction}) or regularization 
(\cite{fan2001variable,zhang2010nearly,zhang2012general}), as well as the training of deep neural networks (\cite{Goodfellow-et-al-2016}). 
In this paper, we consider two classes of nonconvex optimization problems that are widely used in
statistical learning.
The first class of problems intends to minimize the summation of many terms:
\beq\label{problem_non}
\min_{x\in X} \{ f(x):=\sm f_i(x)\},
\eeq
where $X\subseteq\R^n$ is a closed convex set, and
$f_i:X \to \mathbb{R}$, $i=1,\ldots,m$, 
are nonconvex smooth functions with $L$-Lipschitz
continuous gradients over $X$, i.e., for some $L\geq 0$,
\beq
\|\n f_i(x_1)-\n f_i(x_2)\|\leq L\|x_1-x_2\|, \quad \forall x_1, x_2\in X.\label{assumptions_non1}
\eeq
Moreover, we assume that there exists $0<\mu\leq L$ such that (s.t.)
\beq
f_i(x_1)-f_i(x_2)-\la\n f_i(x_2),x_1-x_2\ra \geq -\tfrac{\mu}{2}\|x_1-x_2\|^2, \quad \forall x_1, x_2\in X.\label{assumptions_non2}
\eeq
Clearly, \eqref{assumptions_non1} implies \eqref{assumptions_non2} (with $\mu = L$).
While in the classical nonlinear programming setting one only assumes \eqref{assumptions_non1},
by using both conditions \eqref{assumptions_non1} and \eqref{assumptions_non2} we can explore more structural
information for the design of solution methods of problem~\eqref{problem_non}. In particular, we intend to develop more 
efficient algorithms to solve  problems where the condition number $L/\mu$ associated with problem \eqref{problem_non} is large.
As an example, consider the nonconvex composite problem arising from variable selection in statistics \cite{fan2001variable,ghadimi2016accelerated}: 
$
f(x) = \sm h_i(x) + \rho p(x),
$
where $h_i$'s are smooth convex functions, $p$
is a nonconvex function, and $\rho > 0$ is a relatively small penalty parameter. 
Note that some examples of the nonconvex penalties are given by
minimax concave penalty (MCP) or smoothly clipped absolute deviation (SCAD) (see  \cite{fan2001variable}). It can be shown that
the condition number for these problems is usually larger than $m$ (see Section~\ref{sec-num} for more details).
%

In addition to \eqref{problem_non}, we consider an important class of nonconvex multi-block optimization problems with linearly coupled constraints, i.e.,
\begin{align}\label{c:problem_non}
\min_{x_i\in X_i} &~ \smn f_i(x_i)\nn\\
\text{s.t. } &~\smn A_ix_i = b.
\end{align}
Here $X_i\subseteq\R^{d_i}$ are closed convex sets, $A_i \subseteq \R^{n\times d_i}$, $b \subseteq \R^n$, $f_i:X_i \to \mathbb{R}$
satisfy, for some $\mu\geq0$,
\begin{align}\label{c:assumptions_non1}
f_i(x)-f_i(y)-\la\nabla f_i(y),x-y\ra &\geq -\tfrac{\mu}{2}\|x-y\|^2, \quad \forall x, y\in X_i,
\end{align}
and $f_m:\R^n \to \mathbb{R}$ has $L$-Lipschitz continuous gradients, i.e.,  $\exists L\geq 0$ s.t.
\beq
\|\nabla f_m(x)-\nabla f_m(y)\| \leq  L\|x-y\|, \quad \forall x, y\in \R^n.\label{c:assumptions_non2}
\eeq
Moreover, we assume that $X_m=\R^{n}$ and $A_{m}$ is invertible. 
In other words,  throughout this paper we make the structural assumption that one of the blocks equals the dimension of the variable.
Problem of this type arises 
naturally in compressed sensing and distributed optimization.  For instance, consider the 
compressed sensing problem via nonconvex shrinkage penalties:
$\min_{x_i\in X_i} ~ \left\{p(x):
Ax = b\right\},
$
where $A \in \R^{n \times d}$ is a big sensing matrix with $d >> n$, and $p(x)=\smn p_i(x_i)$ is a nonconvex and separable penalty function.
Since it is easy to find an invertible submatrix in $A$, w.l.o.g, we assume that the last $n$ columns of $A$ forms  an invertible matrix. 
We can then view this problem as a special case of \eqref{c:problem_non} by grouping the last $n$ components of $x$ into block $x_m$, 
and dividing the remaining $d-n$ components into another $m-1$ blocks. 

 Much recent research effort has been directed to efficient solution algorithms for the aforementioned nonconvex finite-sum
or multi-block problems.
Let us start with reviewing a few complexity results associated with existing first-order methods for solving 
the finite-sum problem \eqref{problem_non}.
For simplicity, let us assume that $X = \R^n$ for now.
It is well-known (see, e.g., \cite{Nest04}) that the simple gradient descent (GD) method applied to problem~\eqref{problem_non} requires
${\cal O}(L/\epsilon)$ iterations to find an $\epsilon$-stationary solution, i.e., a point $\bar x$ s.t. $\|\nabla f(\bar x)\|^2 \le \epsilon$.
Since each GD iteration 
requires a full gradient computation, i.e., $m$ gradient computations for $f_i$'s, totally this algorithm needs ${\cal O}(m L / \epsilon)$ gradient computations 
for all the component functions $f_i$'s. 
Ghadimi and Lan~\cite{ghadimi2013stochastic} (see also ~\cite{GhadimiLanZhang2016}) show that by using the stochastic gradient descent (SGD) method, one only needs to compute
the gradient of one randomly selected component function at each iteration, resulting in totally ${\cal O}(L\sigma^2 /\epsilon^2)$ gradient computations
to find a stochastic $\epsilon$-stationary solution of \eqref{problem_non}, i.e., a point $\bar x$ s.t. $\Eb[\|\nabla f(\bar x)\|^2] \le \epsilon$.
Here the expectation is taken w.r.t. some random variables used in the algorithm and $\sigma^2$ denotes their variance. 
Although this complexity bound does not depend on $m$, it 
has a much worse dependence on $\epsilon$ than the GD method for solving problem~\eqref{problem_non}.
Inspired by
the variance reduction techniques originated in convex optimization \cite{johnson2013accelerating}, 
Reddi et al. \cite{reddi2016stochastic,reddi2016fast}, and Allen-Zhu and Hazan \cite{allen2016variance} recently show 
that one only needs $\O\left(m^{2/3}L/\e\right)$ gradient evaluations to find an $\epsilon$-stationary point of \eqref{problem_non},
which significantly improves the bound in \cite{ghadimi2013stochastic} in terms of the dependence on $\epsilon$ and also
dominates the one for GD by a factor of $m^{1/3}$.
However, it remains unknown whether one can further improve this bound in terms of its dependence on $m$
for nonconvex finite-sum optimization especially when $L/\mu$ is large. 

A different line of research aims to incorporate Nesterov's acceleration (momentum) \cite{nesterov1983method} 
into nonconvex optimization. Ghaidmi and Lan \cite{ghadimi2016accelerated}~first established the convergence of
the accelerated gradient method for nonconvex optimization and show that it can improve the complexity of GD if the problem has a large condition number (i.e., $L/\mu$ is large).
Their results were further improved in~\cite{ghadimi2015generalized}, \cite{carmon2016accelerated}, \cite{paquette2017catalyst} and \cite{kong2018complexity}. Currently the best
complexity result, in terms
of total gradient computations, for these methods is given by  $\O\left(m\sqrt{L\mu}/\e\right)$
for unconstrained problems~\cite{kong2018complexity} . However,
it remains unknown if the complexity of such accelerated algorithms can
be further improved in terms of the dependence on $m$, especially when one needs to maintain the  ${\cal O}(1/\epsilon)$ complexity bound
on gradient computations. Note that some nonconvex stochastic accelerated gradient methods have been discussed in \cite{ghadimi2016accelerated}
but they all exhibit a worse ${\cal O}(L\sigma^2/\epsilon^2)$ complexity bounds.


While stochastic and randomized methods are being intensively explored for solving problem~\eqref{problem_non},
most existing studies for the nonconvex multi-block problem in \eqref{c:problem_non} 
have been mainly focused on deterministic batch methods only. Many of these studies aim at the   
generalization of the alternating direction method of multipliers (ADMM) method for nonconvex optimization. For example,
In \cite{hong2016decomposing}, Hong et al. established the complexity for a variant of ADMM for nonconvex multi-block problems, 
see also \cite{hong2016convergence} and \cite{wang2015global} for some previous work on the asymptotic analysis of ADMM for nonconvex optimization.  
In \cite{melo2017iteration-linearized}, Melo and Monteiro presented a linearized proximal multiblock ADMM 
with complexity $\O\left(1/\e\right)$ to attain a nearly feasible $\e$-stationary solution, but all the blocks have to be updated in each iteration.
Later, they proposed a Jacobi-type ADMM in \cite{melo2017iteration-Jacobi} with similar complexity bound, which shows benefits if parallel computing is available. 
While the idea of randomly selecting blocks in nonconvex ADMM has been explored recently, 
these studies focus on the asymptotical convergence of these schemes (e.g., in \cite{hong2016convergence,Ye2018}).
To the best of our knowledge, there does not exist any complexity analysis regarding randomized methods
for solving the nonconvex multi-block problem in \eqref{c:problem_non} in the literature and as a consequence, it remains unclear whether
stochastic or randomized methods are more advantageous over batch ones or not.

Our contribution in this paper mainly exists in the following several aspects.
Firstly, we develop a new randomized algorithm, namely the randomized accelerated proximal gradient (RapGrad) method
for solving problem~\eqref{problem_non} and show that it can significantly improve the
complexity of existing algorithms especially for problems with a large condition number. 
More specifically, we show that RapGrad requires totally
$\O( \mu(m+\sqrt{m L/\mu}) /\e)$ gradient computations in order to find a stochastic $\e$-stationary point.
For  problems with $L/\mu \ge m$, this bound reduces to $\O( \sqrt{mL\mu}/\e)$,
which dominates the best-known batch accelerated gradient methods by a factor of $\sqrt{m}$~\cite{kong2018complexity},
and outperforms those variance-reduced stochastic algorithms~\cite{reddi2016stochastic,reddi2016fast,allen2016variance} by a 
factor of $m^\frac{1}{6} L^\frac{1}{2} / \mu^\frac{1}{2}$ (at least $m^\frac{2}{3}$). In fact, our complexity bound will be better than
the latter algorithms as long as $L/\mu \log (L/\mu)  >  m^{\frac13}$.
Therefore, we provide some affirmative answers regarding whether the complexity bounds of  variance reduced algorithms
and accelerated gradient methods for nonconvex optimization
can be further improved, especially in terms of their dependence on $m$.
To the best of our knowledge, all these complexity results seem to be new in the literature for nonconvex finite-sum optimization.
It is worth noting that some improvement over variance-reduced stochastic algorithms under the region $m \ge L/\mu$ (i.e.,
$L/\mu$ is small)
has been presented recently in \cite{2017arXiv170200763A}.
RapGrad is  a proximal-point type method which iteratively transforms the original nonconvex problem into  
a series of convex subproblems.
In RapGrad, we incorporate a modified optimal randomized incremental gradient method, namely 
the randomized primal-dual gradient (see \cite{lan2017optimal}) to solve these convex subproblems, and
as a consequence, each iteration of RapGrad requires gradient computation for only one randomly selected component function.
In comparison with existing nonconvex proximal-point type methods, the design and analysis of RapGrad 
appear to be more complicated. In particular, RapGrad does not require the computation of full gradients throughout its
entire procedure by properly initializing a few intertwined search points and gradients using information obtained from the previous subproblems. 
This comes with the price of requiring additional storage (memory) for maintaining $\O(m)$ variables (e.g., $\ux^t$).
Moreover, the analysis of RapGrad requires
us to show the convergence for some auxiliary sequences where the gradients are
computed, which has not been established for the original randomized primal-dual gradient method.

Secondly, inspired by RapGrad, we develop a new randomized proximal-point type method, namely
the randomized accelerated proximal dual (RapDual) method, for solving
the nonconvex multi-block problem in \eqref{c:problem_non}. Similarly to RapGrad, this method
solves a series of strongly convex subproblems iteratively generated by adding strongly convex terms, 
via a novel randomized dual
method developed in this paper for solving linearly constrained problems. Each iteration of RapDual requires access to only one randomly selected block,  and
the solution of a relatively easy primal block updating operator.
Note that in order to
guarantee the strong concavity of the Lagrangian dual of the subproblem, we need to assume that $X_m = \R^n$ and the last block $A_m$ is
invertible. Moreover, we assume that it is relatively easy to compute $A_m^{-1}$ (e.g., $A_m$ is the identity matrix, sparse or symmetric diagonally dominant)
to simplify the statement and analysis of the algorithm (see Remark~\ref{remarkinverse} for more discussions).
Let us consider for now the case when
$X_i\equiv \R^{d_i}$ and $A_m = I$. We can show that RapDual can find a solution $(\bx_1,\ldots,\bx_m)$
s.t. $\exists \lambda \in \R^n$, $\Eb[\sum_{i=1}^m\|\nabla f(\bar x_i)+ A_i^{\top}\lbd\|^2] \le \epsilon$ and $\Eb[\|\sum_{i=1}^m A_i \bar x_i - b\|^2] \le \s$
in at most 
\[N(\e,\s):=\tsy{\O\left( m\bar A\sqrt{L\mu}\log\left(\tfrac{L}{\mu}\right)\cdot \max\left\{\tfrac{1}{\e}, ~\tfrac{\|A\|^2}{\s L^2}\right\}\mathcal{D}^0\right)}\]
primal block updates, where  $\bar{A} = \max_{i\in[m-1]} \|A_{i}\|$, $\|\bfA\|^2 = \sum_{i=1}^{m-1} \| A_{i}\|^2$, and $\mathcal{D}^0:=\sum_{i=1}^{m} [f_i(\bx_i^{0})-f_i(x_i^{*})]$. 
Moreover, we demonstrate that the total number primal block updates that RapGrad requires 
can be much smaller, up to a factor of ${\cal O}(\sqrt{m})$, than its batch counterpart.
To the best of our knowledge, this is the first time that the complexity of randomized methods for solving this special class of nonconvex multi-block optimization
 has been established and their possible advantages over batch methods are quantified in the literature.

Thirdly, we perform some numerical experiments on both RapGrad and RapDual for solving nonconvex
finite-sum and multi-block problems in \eqref{problem_non} and \eqref{c:problem_non}
and demonstrate their potential advantages over some existing algorithms.

This paper is organized as follows. In Section \ref{Sec 2}, we present our algorithm RapGrad, and its convergence properties for solvinwg 
the nonconvex finite-sum  problem in \eqref{problem_non}.  RapDual for nonconvex finite-sum  optimization with linear constraints \eqref{c:problem_non} and its  
 convergence analysis are included in  Section \ref{Sec 3}. Section \ref{sec-num} is devoted to some  numerical experiments of our algorithms 
 for the above two types of problems. Finally some concluding remarks are made in Section~\ref{conclusion}.

\subsection{Notation and terminology}Let $\mathbb{R}$ denote the set of real numbers.
All vectors are viewed as column vectors, and
for a vector $x \in \mathbb{R}^d$, we use $x^{\top}$ to denote its transpose.
For any $n \ge 1$, the set of integers $\{1,\ldots,n\}$ is denoted by $[n]$.
We use $\Eb_s[X]$ to denote the expectation of a random variable $X$ on $i_1,\ldots, i_s$.
For a given strongly convex function $\o$, we define the prox-function associated with $\o$ as
\[V_{\o}(x,y): = \o(x)-\o(y)-\la \o^{\prime}(y),x-y\ra, \quad \forall x,y \in X.\]
where $\o^{\prime}(y)\in \partial \o(y)$  is an arbitrary subgradient of $\o$ at $y$.  For any $s\in \R$,  $\lceil s \rceil$ denotes
the nearest integer to $s$ from above.

\section{Nonconvex finite-sum optimization}\label{Sec 2}
In this section, we develop a randomized accelerated proximal gradient (RapGrad) method
for solving the nonconvex finite-sum optimization problem in \eqref{problem_non}
and demonstrate that it can significantly improve the existing rates of convergence
for solving these problems, especially when their objective functions have large condition numbers.
We will describe this algorithm and establish its convergence in Subsections~\ref{sec_SGD_alg}
and \ref{sec_SGD_analysis}, respectively.

\subsection{The Algorithm} \label{sec_SGD_alg}
The basic idea of RapGrad is to solve problem~\eqref{problem_non} iteratively by using the proximal-point type method. 
More specifically, given a current search point  $\bx^{\l-1}$ at the $l$-th iteration, we will employ a randomized accelerated gradient (RaGrad) obtained by properly modifying the
randomized primal-dual gradient method in \cite{lan2017optimal}, to approximately solve
\beq \label{subprob_rand}
\min_{x\in X} ~ \sm f_i(x)+ \frac{3\mu}{2} \|x - \bx^{\l-1}\|^2
\eeq
to compute a new search point $\bx^\l$. Similar idea can be found in \cite{lin2015universal}, where proximal-point method and randomized method are combined to minimize a convex objective.

The algorithmic schemes for RapGrad and RaGrad are described in Algorithm~\ref{alg_non} and Algorithm~\ref{alg:rand}, respectively.
While it seems that we can directly apply the randomized primal-dual gradient method in \cite{lan2017optimal} (or other
fast randomized incremental gradient method) to solve \eqref{subprob_rand}
since it is strongly convex due to  \eqref{assumptions_non2}, a direct application of these methods
would require us to compute the full gradient from time to time whenever a new subproblem needs to be solved.
In fact, if one applies a variance reduced incremental gradient method to solve \eqref{subprob_rand},
the algorithmic scheme would involve three loops (or epochs) and each epoch requires a full gradient computation. 
Moreover, a direct application of these existing first-order methods to solve \eqref{subprob_rand}
would result in some extra logarithmic factor ($\log (1/\epsilon)$) in the final complexity bound as shown in~\cite{carmon2016accelerated}.
Therefore, we 
employed the RaGrad method to solve \eqref{subprob_rand}, which differs from the original randomized primal-dual gradient method
in the following several aspects. Firstly, 
different from the randomized primal-dual gradient method, the design and
analysis of RaGrad does not involve the conjugate functions of $f_i$'s, but only first-order information (function values and gradients). 
Such an analysis
enables us to build a relation between successive search points $\bx^{\l}$, as well as the
convergence of the sequences $\bar\ux^{\l}_i$ where the gradients $\by^{\l}_i$ are computed. 
With these relations at hand, we can determine the number of iterations $s$ required by Algorithm~\ref{alg:rand} to ensure
the overall RapGrad Algorithm to achieve an accelerated rate of convergence.

Second, the original randomized primal-dual gradient method 
requires the computation of  only one randomly
selected gradient at each iteration, and does not require the computation of full gradients from time to time. 
However, it is unclear whether a full pass of all component functions is required 
whenever we solve a new proximal subproblem (i.e., $\bx^{\l-1}$ changes at each iteration). 
It turns out that by properly initializing a few intertwined primal and gradient sequences in RaGrad
using information obtained from previous subproblems,  we will compute full gradient only once for the very first time when 
this method is called, and do not need to compute full gradients any more when solving all other subproblems 
throughout the RapGrad method. 
Indeed, the output $y^i_s$ of RaGrad (Algorithm~\ref{alg:rand})  represent the gradients of $\psi_i$ at the search points $\underline x_i^s$. By using the strong convexity of the objective functions,
we will be able to show that all the search points $\underline x^i_s$, $i = 1, \ldots, m$, will converge,
similarly to the search point $x^s$,  to the optimal solution of the subproblem in \eqref{problem_convex} (see Lemma~\ref{lemma_dist} below).
Therefore, we can use $y^i_s$ to approximate $\nabla \psi_i(x^s)$ and thus remove the necessity of computing the full gradient of $x^s$ when solving the next subproblem.

\begin{algorithm}[H]  \caption{RapGrad for nonconvex finite-sum  optimization}  
\label{alg_non}
\begin{algorithmic} 
\STATE{Let $\bx^0\in X$, and set $\bar\ux^0_i =\bx^0$, $\by^0_i = \n f_i(\bx^0)$, $i=1,\ldots,m$.}
\FOR{$\ell = 1, \ldots, k$}
\STATE{Set $x^{-1}=x^0 = \bx^{\l-1}$, $\ux_i^{0}=\bar\ux^{\l-1}_i$, and $y_i^{0}=\by^{\l-1}_i$},  $i=1,\ldots,m$.
\STATE{Run RaGrad (c.f., Algorithm \ref{alg:rand}) with input $x^{-1}$, $x^0$, $\ux_i^{0}$, $y_i^{0}$, $i=1,\ldots,m$, and $s$  to solve the following subproblem}
\beq \label{problem_convex}
\min_{x\in X} ~ \sm \psi_i(x)+\varphi(x)
\eeq
to obtain output $x^s$, $\underline{x}_i^s$, $y_i^s$ ,  $i=1,\ldots,m$,
where $\psi_i(x) \equiv \psi_i^{\l}(x):= f_i(x)+ \mu\|x-\bx^{\l-1}\|^2$, $i=1,\ldots,m$, and $\varphi(x) \equiv \varphi^{\l}(x):= \tfrac{\mu}{2}\|x-\bx^{\l-1}\|^2$.
\STATE{Set $\bx^{\l}= x^s$, $\bar\ux^{\l}_i=\ux_i^s$ and $\by^{\l}_i=y_i^{s}+2\mu(\bx^{\l-1}-\bx^{\l})$, $i=1,\ldots,m$ (note $\by^{\l}_i=\n \psi_i^{\l+1}(\bar\ux^{\l}_i)$  always holds).}
\ENDFOR
\RETURN $\bx^{\hat\l}$ for some random $\hat \l\in [k]$.
\end{algorithmic}
\end{algorithm}

\begin{algorithm}[H]
\caption{RaGrad for iteratively solving subproblem \eqref{problem_convex}}\label{alg:rand}
\begin{algorithmic}
\STATE{Input $x^{-1}=x^0\in X$,  $\ux_i^0\in X$, $y_i^0$, $i=1,\ldots,m$, number of iterations $s$.
Assume nonnegative parameters $\{\a_t\}$, $\{\tau_t\}$, $\{\eta_t\}$ are given.} 
\FOR{$t = 1, \ldots, s$}
\STATE{1. Generate a random variable $i_t$ uniformly distributed over $[m]$.}
\STATE{2. Update $x^t$ and $y^t$ according to}
\begin{align}
\tilde{x}^t =&~ \a_t (x^{t-1}-x^{t-2}) + x^{t-1}.\label{def_tx}\\
\ux_i^t = &~
\left\{
\begin{array}{ll}
(1+\tau_t)^{-1}(\tilde{x}^t+\tau_t\ux_i^{t-1}), & i = i_t, \\
\ux_i^{t-1}, & i \neq i_t,
\end{array}
\right.\label{def_tux}\\
y_i^t =&~
\left\{
\begin{array}{ll}
\n \psi_i(\ux_i^t), & i = i_t, \\
y_i^{t-1}, & i \neq i_t,
\end{array}
\right.\label{def_ty}\\
\tilde{y_i}^t =&~m(y_i^t-y_i^{t-1})+y_i^{t-1}, \quad \forall i=1,\ldots,m\\
x^t = &~
\argmin_{x\in X}~  \varphi(x) + \lla \sm \tilde y_i^t,x\rra + {\eta_t}V_{\varphi}(x,x^{t-1}).\label{def_xt}
\end{align}
\ENDFOR
\RETURN $x^s$, $\ux_i^s$, and $y_i^s$, $i=1,\ldots,m$.
\end{algorithmic}
\end{algorithm}

Before establishing the convergence of the RapGrad method, we first need to define
an approximate stationary point for problem~\eqref{problem_non}. 
A point $x \in X$ is called an approximate stationary point if it
sits within a small neighborhood of a point $\hat x \in X$ which approximately satisfies the first-order optimality condition.

\begin{definition} \label{def_solution_acc}
A point $x \in X$ is called an  $(\e,\d)$-solution of \eqref{problem_non} if there exists some $\hat x\in X$ such that
\[[d\left( \n f(\hat x),-N_X(\hat x)\right)]^2\leq \e \quad \text{and}\quad \|x-\hat x\|^2\leq \d.\] 
A stochastic $(\e,\d)$-solution of \eqref{problem_non} is one such that 
\[\Eb[d\left( \n f(\hat x),-N_X(\hat x)\right)]^2\leq \e \quad \text{and}\quad \Eb\|x-\hat x\|^2\leq \d.\] 
Here, $d(x,Z):=\inf_{z\in Z} \|x-z\|$ denotes the distance from $x$ to set $Z$, and
$N_{X} (\hat x):=\{x\in\R^n| ~\la x, y- \hat x\ra\leq 0  \text{~for all~}y \in X\}$ denotes the normal cone of $X$ at $\hat x$.  
\end{definition}

To have a better understanding of the above definition,  let us consider the unconstrained problem \eqref{problem_non}, i.e., $X=\R^n$. 
Suppose that $x \in X$ is an $(\e,\d)$-solution with $\d=\e/L^2$. Then
there exists $\hat x \in X$ s.t.
$\|\n f(\hat x)\|^2\leq \e$ and  $\| x-\hat x\|^2\leq \e/L^2$, which implies that
\begin{align}
\|\n f(x)\|^2&=\|\n f(x)-\n f(\hat x)+\n f(\hat x)\|^2
\leq  2 \|\n f(x)-\n f(\hat x)\|^2+2\|\n f(\hat x)\|^2\nn\\
&\leq  2 L^2\|x-\hat x\|^2+2\|\n f(\hat x)\|^2
\leq 4\e.\label{quality}
\end{align}
Moreover, if $X$ is a compact set and $x \in X$ is an $(\e,\d)$-solution, we can bound strong gap as follows:

\beq \label{quality1}
\begin{aligned}
{\rm gap}(x) 
&:= \max_{z \in X} \langle \n f(x), x - z\rangle \\ 
&= \max_{z \in X} \langle \n f(x)- \n f(\hat x), x - z\rangle 
+\max_{z \in X} \langle \n f(\hat x) , x- \hat x \rangle
+\max_{z \in X} \langle \n f(\hat x) , \hat x- z\rangle \\
&\le L\sqrt{\d} D_X + \sqrt{\d} \|\n f(\hat x)\| +  \sqrt{\e} D_X,
 \end{aligned}
\eeq
where $D_X := \max_{x_1, x_2 \in X} \|x_1 - x_2\|$. In comparison with the two well-known criterions in \eqref{quality} and \eqref{quality1}, 
the criterion given in Definition~\ref{def_solution_acc}
seems to be applicable to a wider class of problems and is particularly
suitable for proximal-point type methods (see \cite{davis2017proximally} for a related notion).

We are now ready to state the main convergence properties for RapGrad.

\begin{theorem}\label{Thm_main}
 Let the iterates $\bx^{\l}$, $\l= 1, \ldots, k$, be generated by Algorithm \ref{alg_non}
 and $\hat \l$ be randomly selected from $[k]$. 
Suppose that in Algorithm \ref{alg:rand}, the number of iterations $s=\lceil-\log \wt M / \log \a\rceil$ with
\beq\label{parameters1}
\wt M:=\tsy{6\left(5+\tfrac{2L}{\mu}\right)}\max\left\{\tfrac{6}{5},\tfrac{L^2}{\mu^2}\right\}, \quad \a=1-\tfrac{2}{m\left(\sqrt{1+16c/m}+1\right)}, \quad c = 2+\tfrac{L}{\mu},
\eeq
and other parameters are set to 
\beq\label{parameters2}
\a_t =\a,\quad \g_t = \a^{-t},  
\quad \tau_t =  \tfrac{1}{m(1-\a)}-1,\quad \mbox{and} \quad \eta_t = \tfrac{\a}{1-\a},\quad \forall t = 1, \ldots, s.
\eeq
Then we have
\begin{align*}
\Eb\left[d\left( \n f(x^{\hat\l}_*),-N_X(x_*^{\hat\l})\right)\right]^2
&\leq \tfrac{36\mu}{k}[f(\bx^{0})-f(x^{*})],\\
\Eb\|\bar x^{\hat\l}-x^{\hat\l}_*\|^2&\leq \tfrac{4\mu}{kL^2}[f(\bx^{0})-f(x^{*})],
\end{align*}
where $x^*$ and $x^{\l}_*$ denote the optimal solutions to problem \eqref{problem_non} and the $\l$-th subproblem \eqref{subprob_rand}, respectively.
\end{theorem}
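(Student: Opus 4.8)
The plan is to treat RapGrad as an inexact proximal-point method and show that the per-iteration proximal displacement $\|x_*^\ell - \bx^{\l-1}\|^2$ telescopes against the decrease of $f$. Write $F^\ell(x) := \sm f_i(x) + \tfrac{3\mu}{2}\|x - \bx^{\l-1}\|^2$ for the $\ell$-th subproblem \eqref{subprob_rand}, so that $x_*^\ell = \argmin_{x\in X} F^\ell(x)$. Condition \eqref{assumptions_non2} gives that $\sm f_i$ has curvature at least $-\mu$, whence $F^\ell$ is $2\mu$-strongly convex and $(L+3\mu)$-smooth; note also $F^\ell(\bx^{\l-1}) = f(\bx^{\l-1})$ because the prox term vanishes there. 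First I would record the stationarity estimate: the optimality condition $-\n F^\ell(x_*^\ell)\in N_X(x_*^\ell)$ reads $-\n f(x_*^\ell) - 3\mu(x_*^\ell - \bx^{\l-1})\in N_X(x_*^\ell)$, so the element $-3\mu(x_*^\ell - \bx^{\l-1})$ lies in $\n f(x_*^\ell)+N_X(x_*^\ell)$ and therefore
\[
[d(\n f(x_*^\ell), -N_X(x_*^\ell))]^2 \le 9\mu^2\|x_*^\ell - \bx^{\l-1}\|^2.
\]

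Next I would establish an outer descent inequality. Setting $Q^\ell := F^\ell(\bx^\l) - F^\ell(x_*^\ell)\ge 0$ for the subproblem optimality gap at the RaGrad output $\bx^\l = x^s$, expanding the prox terms gives $f(\bx^\l) = Q^\ell + F^\ell(x_*^\ell) - \tfrac{3\mu}{2}\|\bx^\l - \bx^{\l-1}\|^2$, and $2\mu$-strong convexity yields $F^\ell(x_*^\ell)\le f(\bx^{\l-1}) - \mu\|\bx^{\l-1} - x_*^\ell\|^2$. Discarding the nonpositive $-\tfrac{3\mu}{2}\|\bx^\l-\bx^{\l-1}\|^2$ term, this produces
\[
\mu\|x_*^\ell - \bx^{\l-1}\|^2 \le f(\bx^{\l-1}) - f(\bx^\l) + Q^\ell.
\]
Summing over $\ell = 1,\ldots,k$, taking expectations, and using $f(\bx^k)\ge f(x^*)$ gives $\mu\sum_{\ell=1}^k \Eb\|x_*^\ell - \bx^{\l-1}\|^2 \le [f(\bx^0)-f(x^*)] + \sum_{\ell=1}^k \Eb Q^\ell$.

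The third ingredient is the linear convergence of the inner solver, which I would import from the RaGrad analysis (Lemma~\ref{lemma_dist}). The essential point, enabled by warm-starting the intertwined sequences $\ux_i^0 = \bar\ux^{\l-1}_i$ and $y_i^0 = \by^{\l-1}_i$, is that after $s$ iterations both $\bx^\l = x^s$ and all auxiliary points $\bar\ux^\l_i$ contract toward $x_*^\ell$ at the geometric rate $\a^s$. With $s = \lceil -\log\wt M/\log\a\rceil$ we have $\a^s \le 1/\wt M$; since $\wt M = 6(5 + 2L/\mu)\max\{6/5, L^2/\mu^2\}$ is chosen precisely so that the condition-number prefactor in the RaGrad estimate is absorbed, this delivers two consequences: $\sum_{\ell=1}^k \Eb Q^\ell \le 3[f(\bx^0)-f(x^*)]$ (hence $\sum_{\ell=1}^k \Eb\|x_*^\ell - \bx^{\l-1}\|^2 \le \tfrac{4}{\mu}[f(\bx^0)-f(x^*)]$), and $\Eb\|\bx^\l - x_*^\ell\|^2 \le \tfrac{\mu^2}{L^2}\Eb\|\bx^{\l-1} - x_*^\ell\|^2$. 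Combining the first with the stationarity estimate and averaging over the uniform index $\hat\l$ gives
\[
\Eb[d(\n f(x_*^{\hat\l}), -N_X(x_*^{\hat\l}))]^2 = \tfrac{1}{k}\sum_{\ell=1}^k 9\mu^2\,\Eb\|x_*^\ell - \bx^{\l-1}\|^2 \le \tfrac{36\mu}{k}[f(\bx^0)-f(x^*)],
\]
while combining the second with the same telescoped sum gives $\Eb\|\bx^{\hat\l} - x_*^{\hat\l}\|^2 \le \tfrac{4\mu}{kL^2}[f(\bx^0)-f(x^*)]$, as claimed.

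The main obstacle is this third ingredient: proving that the warm-started auxiliary sequences $\bar\ux^\l_i$ (and the stored gradients $\by^\l_i$) genuinely converge to the new optimum $x_*^\ell$ without any full-gradient recomputation. Because these quantities were driven toward the previous optimum $x_*^{\ell-1}$, the inner analysis must control the mismatch through terms like $\|x_*^{\ell-1} - x_*^\ell\|^2$ and fold them into a composite Lyapunov function that still contracts; this cross-subproblem bookkeeping, together with threading the constants in $\wt M$ so that $\sum_\ell \Eb Q^\ell \le 3[f(\bx^0)-f(x^*)]$, is the delicate part. By contrast, the constrained-case cross term $\langle \n f(x_*^\ell), \bx^\l - x_*^\ell\rangle$ never needs to be bounded, since working with the subproblem values $F^\ell$ rather than with $f$ directly sidesteps the normal-cone contribution.
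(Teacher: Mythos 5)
Your outer-loop skeleton is partially sound, and the stationarity estimate ($[d(\n f(x_*^\ell),-N_X(x_*^\ell))]^2\le 9\mu^2\|x_*^\ell-\bx^{\l-1}\|^2$) is identical to the paper's. Your telescoping of $f$ over the iterates $\bx^\ell$, with the subproblem optimality gaps $Q^\ell:=F^\ell(\bx^\ell)-F^\ell(x_*^\ell)$ as error terms, is however a genuinely different decomposition from the paper's, which telescopes over the subproblem optima via the single comparison $F^\ell(x_*^\ell)\le F^\ell(x_*^{\ell-1})$ and keeps every inexactness term in squared-distance form (Lemma~\ref{lemma_final}). This difference is where your proof breaks. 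The inner-solver guarantee you can import (Theorem~\ref{Thm_convex}) is stated purely in squared distances, so to get $\sum_\ell\Eb Q^\ell\le 3[f(\bx^0)-f(x^*)]$ you must convert $\Eb\|\bx^\ell-x_*^\ell\|^2=O(\a^s)$ into $\Eb[F^\ell(\bx^\ell)-F^\ell(x_*^\ell)]=O(\a^s)$. Smoothness gives $F^\ell(\bx^\ell)-F^\ell(x_*^\ell)\le\la\n F^\ell(x_*^\ell),\bx^\ell-x_*^\ell\ra+\tfrac{L+3\mu}{2}\|\bx^\ell-x_*^\ell\|^2$, and when $X\neq\R^n$ the cross term is exactly the normal-cone contribution: it is nonnegative, so it cannot be discarded in an upper bound, and it is generically of \emph{first} order in the distance (take $X=[0,\infty)$, $F(x)=x$, $x_*=0$: then $F(\bx)-F(x_*)=\|\bx-x_*\|$). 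Hence $Q^\ell$ decays only like $\sqrt{\a^s}$, the choice of $\wt M$ no longer absorbs the condition number, and the constants collapse. Your closing claim --- that working with the subproblem values $F^\ell$ ``sidesteps the normal-cone contribution'' --- is exactly backwards: it is the paper's distance-only bookkeeping that sidesteps it, and your function-value bookkeeping that runs into it; the paper's inequality $F^\ell(x_*^\ell)\le F^\ell(x_*^{\ell-1})$ uses no smoothness and produces no cross term, which is why it covers general closed convex $X$.

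Even in the unconstrained case, where $\n F^\ell(x_*^\ell)=0$ and the conversion is legitimate, both consequences you import are asserted rather than derived. Theorem~\ref{Thm_convex} bounds $\Eb\|\bx^\ell-x_*^\ell\|^2$ by $\a^s(1+2\tfrac{\hat L}{\mu})$ times $\Eb[\|\bx^{\l-1}-x_*^\ell\|^2+\sm\|\bar\ux_i^{\l-1}-\bx^{\l-1}\|^2]$; the auxiliary-sequence term does not vanish, and taming it requires the induction across subproblems that the paper carries out in \eqref{iteration}. Moreover, $\sum_\ell\Eb Q^\ell$ cannot be bounded independently of your descent inequality: its natural bound is a multiple of $\sum_\ell\Eb\|x_*^\ell-\bx^{\l-1}\|^2$, which is precisely the quantity you are trying to control, so one must bootstrap (e.g.\ establish $\sum_\ell\Eb Q^\ell\le\tfrac{3\mu}{4}\sum_\ell\Eb\|x_*^\ell-\bx^{\l-1}\|^2$ and absorb it into the left-hand side). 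You flag both items as ``the delicate part'' but execute neither, and the first issue --- the constrained cross term --- is not merely delicate: as written, it makes your route fail for the general closed convex $X$ that the theorem claims to handle.
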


Theorem \ref{Thm_main} guarantees, in expectation, the existence of an approximate stationary point $x_*^{\hat\l}$, which is the optimal solution to the $\hat\l$-th subproblem. 
Though $x_*^{\hat\l}$ is unknown to us, we can output the computable solution $\bar x^{\hat\l}$ since it is close enough to $x_*^{\hat\l}$. Moreover,
its quality can be directly measured by \eqref{quality} and \eqref{quality1} under certain important circumstances.

In view of Theorem \ref{Thm_main}, we can bound the total number of gradient evaluations required by RapGrad to yield a stochastic $(\e,\d)$-solution of \eqref{problem_non}. 
Indeed, observe that the full gradient is computed only once  in the first outer loop,
and that for  each subproblem \eqref{problem_non}, 
we only need to compute $s$ gradients with
\[\tsy{s= \left\lceil-\tfrac{\log \wt M}{\log {\a}}\right\rceil\sim\O\left(\left(m+\sqrt{m\tfrac{ L}{\mu}}\right)\log \left(\tfrac{ L}{\mu}\right)\right)}.\]
Hence, the total number of gradient evaluations performed by RapGrad can be bounded by

\[
N(\e,\d):=\tsy{\O\left(m + \left (m + \sqrt{\tfrac{mL}{\mu}} \right) \log\tfrac{L}{\mu} +
\mu\left(m+\sqrt{m\tfrac{L}{\mu}}\right)\log \left(\tfrac{ L}{\mu}\right)\cdot\max\left\{\tfrac{1}{\d L^2},~\tfrac{1}{\e}\right\}D^0\right)},
\]where  $D^0:=f(\bx^{0})-f(x^{*})$.  As a comparison,
the batch version of this algorithm, obtained by viewing $\sm f_i(x)$ as a single component, would update 
all the $\ux_i^t$ and $y_i^t$ for $i=1,\ldots,m$, in \eqref{def_tux} and \eqref{def_ty} at each iteration, 
and hence would require 

\[\hat N(\e,\d):=\tsy{\O\left(m\sqrt{\tfrac{L}{\mu}} \log\tfrac{L}{\mu}   +  m\sqrt{L\mu}\log \left(\tfrac{L}{\mu}\right)\cdot \max\left\{\tfrac{1}{\d L^2},~\tfrac{1}{\e}\right\}D^0\right)}\]
gradient evaluations to compute an $(\e,\d)$-solution of \eqref{problem_non}. 


It is worth noting that both complexity bounds $N(\e,\d)$ and $\hat N(\e,\d)$
will go to $+\infty$ as $\mu$ tends to $0$. This indicates that 
the proximal-point method
should not be applied to the situation when $\mu$ is too small, no matter whether the randomized or batch version is used.
However, if $\mu$ is indeed small, say $\mu < \epsilon$,
we can see that the assumption in \eqref{assumptions_non2} also holds for $\mu = \epsilon$.
Therefore, we can assume that $\mu \ge \epsilon$ when applying RapGrad.
A closer examination to this issue reveals that
the algorithm (i.e., RaGrad) that we used to solve the strongly convex subproblems
is not optimal if the strongly convex modulus $\mu$ is too small.
Hence, a different  remedy would be to apply an optimal randomized
incremental gradient method which can yield the best possible complexity bound
even if $\mu$ is small. Using this latter approach, we possibly do not need to fix $\epsilon$ a priori as in the former one.  
However,  this type of optimal
randomized incremental gradient method has not been developed  until
 recently (see \cite{LanLiZhou19-1}), about one year later after our paper was initially released.

For problems with $L/\mu \ge m$,  RapGrad can potentially save the total number of
gradient computations up to a factor of ${\cal O}(\sqrt{m})$ gradient evaluations than its batch counterpart as well as
other deterministic batch methods reported in \cite{paquette2017catalyst,kong2018complexity}. 
It is also interesting to compare RapGrad with those variance-reduced stochastic algorithms~\cite{reddi2016stochastic,reddi2016fast,allen2016variance}.
For simplicity, consider for the case when $\d=\e/L^2$ and $X \equiv \R^n$. In this case, the complexity bound of RapGrad, given by $\O( \sqrt{mL\mu}/\e)$,
is smaller than those of variance-reduced stochastic algorithms~\cite{reddi2016stochastic,reddi2016fast,allen2016variance} by a 
factor of ${\cal O}(m^\frac{1}{6} L^\frac{1}{2} / \mu^\frac{1}{2})$, which must be greater that
${\cal O}(m^\frac{2}{3})$ due to $L/\mu \ge m$. In fact, our complexity bound minorizes
those for variance-reduced stochastic algorithms as long as $L/\mu \log (L/\mu)  >  m^{\frac13}$.

Theorem~\ref{Thm_main} only shows the convergence of RapGrad in expectation. 
Similarly to the nonconvex SGD methods in \cite{ghadimi2013stochastic,GhadimiLanZhang2016}, we can 
establish and then further improve the convergence of RapGrad with overwhelming probability  by using a two-phase procedure, where
one computes a short list of candidate solutions in the optimization phase by  either taking a few independent runs of RapGrad 
or randomly selecting a few solutions from the trajectory of RapGrad, and then chooses the best solution, e.g.,
in terms of either \eqref{quality} and \eqref{quality1}, in the post-optimization phase.
  
\subsection{Convergence analysis for RapGrad}\label{sec_SGD_analysis}
In this section, we will first develop the convergence results for Algorithm \ref{alg:rand} applied to the convex finite-sum subproblem~\eqref{problem_convex},
and then using them to establish the convergence of RapGrad.
Observe that the component functions $\psi_i$ and $\varphi$ in ~\eqref{problem_convex} satisfy:
\begin{enumerate}[label=(\Alph*)]
\item
$\tfrac{\mu}{2}\|x-y\|^2\leq \psi_i(x)-\psi_i(y)-\la\n \psi_i(y),x-y\ra \leq   \tfrac{\hat L}{2}\|x-y\|^2,\ \ \forall x, y\in X, \quad i=1,\ldots,m,$
\item
$\varphi(x)-\varphi(y)-\la\n \varphi(y),x-y\ra \geq \tfrac{\mu}{2}\|x-y\|^2, \ \ \forall x, y\in X,$
\end{enumerate}
where $\hat L = L+2\mu$.

We first state some simple relations about the iterations generated by Algorithm \ref{alg:rand}.
\begin{lemma}\label{lemma_full}
Let $\hat{\ux}_i^t = (1+\tau_t)^{-1}(\tilde{x}^t+\tau_t\ux_i^{t-1})$, for $i = 1,\ldots, m,~ t=1,\ldots, s$. 
\begin{align}
\Eb_{i_t}  [\psi(\hat{\ux}_i^t)] &= m \psi(\ux_i^t)-(m-1) \psi(\ux_i^{t-1}) \label{eqn:full1},\\
\Eb_{i_t} [\n \psi(\hat{\ux}_i^t)] &= m\n \psi(\ux_i^t)-(m-1)\n \psi(\ux_i^{t-1})=\Eb_{i_t} [\tilde y_i^t]\label{eqn:full2}.
\end{align}
\begin{proof} 
By the definition of  $\hat{\ux}_i^t $, it is easy to see that $\Eb_{i_t} [\ux_i^t]= \tfrac{1}{m}\hat{\ux}_i^t+\tfrac{m-1}{m}\ux_i^{t-1}$, 
thus $\Eb_{i_t} [\psi_i(\ux_i^t)]= \tfrac{1}{m}\psi_i(\hat{\ux}_i^t)+\tfrac{m-1}{m}\psi_i(\ux_i^{t-1})$, 
and $\Eb_{i_t} [\n \psi_i(\ux_i^t)]= \tfrac{1}{m} \n \psi_i(\hat{\ux}_i^t)+\tfrac{m-1}{m}\n \psi_i(\ux_i^{t-1})$, 
which combined with the fact $\tilde{y}_i^t =~m(y_i^t-y_i^{t-1})+y_i^{t-1}$, gives us the desired relations.
\end{proof}
\end{lemma}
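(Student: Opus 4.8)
The plan is to reduce both identities to the elementary observation that, conditioned on all randomness through iteration $t-1$, the auxiliary point $\hat\ux_i^t=(1+\tau_t)^{-1}(\tilde x^t+\tau_t\ux_i^{t-1})$ is deterministic: it is assembled from $\tilde x^t$ (a function of $x^{t-1},x^{t-2}$) and $\ux_i^{t-1}$, none of which depends on the freshly drawn index $i_t$. First I would record the two-point law of $\ux_i^t$ induced by \eqref{def_tux}: since $i_t$ is uniform on $[m]$, the event $\{i=i_t\}$ has probability $1/m$, on which $\ux_i^t=\hat\ux_i^t$, and its complement has probability $(m-1)/m$, on which $\ux_i^t=\ux_i^{t-1}$. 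Consequently $\Eb_{i_t}[\ux_i^t]=\tfrac1m\hat\ux_i^t+\tfrac{m-1}{m}\ux_i^{t-1}$, and --- this is the key point --- the same convex combination holds after applying \emph{any} fixed function of $\ux_i^t$, because $\ux_i^t$ takes only the two values $\hat\ux_i^t$ and $\ux_i^{t-1}$.

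Applying this with the function $\psi_i$ gives $\Eb_{i_t}[\psi_i(\ux_i^t)]=\tfrac1m\psi_i(\hat\ux_i^t)+\tfrac{m-1}{m}\psi_i(\ux_i^{t-1})$, and multiplying through by $m$ and rearranging yields \eqref{eqn:full1}; applying it instead with $\n\psi_i$ produces the first equality in \eqref{eqn:full2} in exactly the same way. I would emphasize here that this is precisely the unbiased-estimator (variance-reduction) structure: the deterministic ``full'' quantity $\psi_i(\hat\ux_i^t)$ is recovered as an appropriate affine combination of the cheaply computed randomized quantities.

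For the remaining equality $\Eb_{i_t}[\tilde y_i^t]=m\n\psi_i(\ux_i^t)-(m-1)\n\psi_i(\ux_i^{t-1})$ I would first verify the invariant $y_i^{t-1}=\n\psi_i(\ux_i^{t-1})$. This propagates along the iteration: it holds at $t=0$ because Algorithm~\ref{alg_non} initializes the inputs so that $y_i^0=\n\psi_i(\ux_i^0)$ (the relation $\by_i^{\l}=\n\psi_i^{\l+1}(\bar\ux_i^{\l})$ noted there), and rule \eqref{def_ty} preserves it, since on $\{i=i_t\}$ the gradient is recomputed at the new point and on the complement both $y_i$ and $\ux_i$ are frozen. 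Granting this, the same two-point argument gives $\Eb_{i_t}[y_i^t]=\tfrac1m\n\psi_i(\hat\ux_i^t)+\tfrac{m-1}{m}y_i^{t-1}$. Taking expectation in the definition $\tilde y_i^t=m(y_i^t-y_i^{t-1})+y_i^{t-1}$ and substituting then collapses the $y_i^{t-1}$ terms, leaving exactly $\n\psi_i(\hat\ux_i^t)$, which matches the rearranged right-hand side.

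There is no serious analytic obstacle; the statement is a conditional-expectation bookkeeping identity. The only points that require care are recognizing that $\hat\ux_i^t$ is measurable with respect to the history (so that the expectation acts only through the indicator $\mathbf{1}\{i=i_t\}$), and checking that the gradient-bookkeeping invariant $y_i^{t-1}=\n\psi_i(\ux_i^{t-1})$ is maintained across iterations --- the latter being exactly the property that lets $\tilde y_i^t$ serve as an unbiased surrogate for the full gradient without any periodic full-gradient recomputation.
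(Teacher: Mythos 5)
Your proof is correct and follows essentially the same route as the paper's: the two-point conditional law of $\ux_i^t$ given the history, applied to $\psi_i$ and $\n \psi_i$, then combined with the definition $\tilde y_i^t = m(y_i^t-y_i^{t-1})+y_i^{t-1}$. The only deviation is your detour through the invariant $y_i^{t-1}=\n\psi_i(\ux_i^{t-1})$, which is in fact unnecessary: since $\Eb_{i_t}[\tilde y_i^t]=m\,\Eb_{i_t}[y_i^t]-(m-1)y_i^{t-1}$ and $\Eb_{i_t}[y_i^t]=\tfrac{1}{m}\n\psi_i(\hat{\ux}_i^t)+\tfrac{m-1}{m}y_i^{t-1}$, the $y_i^{t-1}$ terms cancel for \emph{any} value of $y_i^{t-1}$, so the identity holds for arbitrary inputs to Algorithm~\ref{alg:rand} without appealing to the initialization in Algorithm~\ref{alg_non}.
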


Lemma~\ref{lemma_dist} below describes an important result about Algorithm \ref{alg:rand}, which
improves Lemma 7 of \cite{lan2017optimal} by showing the convergence of ${\ux}^{s}_i$.
The proof of this result is more involved and will be deferred  in Appendix~\ref{pf_lemma2}.
\begin{lemma}\label{lemma_dist}
Let the iterates $x^t$ and $y^t$, for $t = 1, \ldots, s$, be generated by Algorithm \ref{alg:rand}
and $x^*$ be an optimal solution of \eqref{problem_convex}.
If the parameters in Algorithm \ref{alg:rand} satisfy for all $t = 1, \ldots, s-1$,
\begin{align}
\a_{t+1}\g_{t+1}= &~ \g_{t}, \label{eqn:ss1}\\
\g_{t+1}[m(1+\tau_{t+1})-1] \leq &~ m\g_{t}(1+\tau_{t}),\label{eqn:ss2}\\
\g_{t+1}\eta_{t+1}\leq &~ \g_{t}(1+\eta_{t}),\label{eqn:ss3}\\
 \tfrac{\eta_s \mu}{4}\geq &~ \tfrac{(m-1)^2\hat L}{m^2\tau_s},\label{eqn:ss4}\\
\tfrac{\eta_{t}\mu}{2}\geq &~\tfrac{\a_{t+1}\hat L}{\tau_{t+1}}+\tfrac{(m-1)^2\hat L}{m^2\tau_{t}}  ,\label{eqn:ss5}\\
\tfrac{\eta_s\mu}{4} \geq &~\tfrac{\hat L}{m(1+\tau_s)}, \label{eqn:ss6}
\end{align}
then we have
\begin{align*}
\Eb_s\left[\g_s(1+\eta_s)V_\varphi(x^*,x^s)+\smn\tfrac{\mu\g_s (1+\tau_s)}{4}\|{\ux}^{s}_i-x^*\|^2\right] &\leq 
\g_1\eta_1  \Eb_sV_\varphi(x^*,x^0)
+\smn\tfrac{\g_1[(1+\tau_1)-1/m]\hat L}{2}\Eb_s\|{\ux}^{0}_i-x^*\|^2.
\end{align*}
\end{lemma}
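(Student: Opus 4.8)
The plan is to establish the stated one-step-to-horizon energy inequality by first deriving a \emph{single-iteration} recursion of the form
\begin{align*}
\g_{t+1}(1+\eta_{t+1})\,\Eb_{i_{t+1}} V_\varphi(x^*,x^{t+1})
&+\smn \tfrac{\mu\g_{t+1}(1+\tau_{t+1})}{4}\,\Eb_{i_{t+1}}\|\ux_i^{t+1}-x^*\|^2
\\
&\le \g_t\eta_t\, V_\varphi(x^*,x^t)
+\smn \tfrac{\mu\g_t(1+\tau_t)}{4}\,\|\ux_i^t-x^*\|^2 + (\text{cross terms}),
\end{align*}
and then telescoping from $t=0$ to $s-1$ so that the cross/gradient-inner-product terms cancel under conditions \eqref{eqn:ss1}--\eqref{eqn:ss6}. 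The starting point is the optimality condition for the prox-subproblem \eqref{def_xt} defining $x^t$: since $\varphi$ is $\mu$-strongly convex (property (B)), the three-point inequality for $V_\varphi$ gives, for every $x\in X$,
\[
\varphi(x^t)+\lla \sm \tilde y_i^t, x^t\rra + \eta_t V_\varphi(x^t,x^{t-1})
\le \varphi(x) + \lla \sm \tilde y_i^t, x\rra + \eta_t V_\varphi(x,x^{t-1}) - (1+\eta_t)V_\varphi(x,x^t).
\]
Setting $x=x^*$ produces the $(1+\eta_t)V_\varphi(x^*,x^t)$ term on the correct side together with the gradient inner product $\la \sm \tilde y_i^t, x^t-x^*\ra$, which I would rewrite using the definition $\tilde y_i^t = m(y_i^t-y_i^{t-1})+y_i^{t-1}$ and Lemma~\ref{lemma_full}.

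The core of the argument is controlling the term $\la \sm \tilde y_i^t, x^t-x^*\ra$. Using \eqref{eqn:full2}, taking $\Eb_{i_t}$, I would replace $\tilde y_i^t$ by $\n\psi_i(\hat\ux_i^t)$ in expectation, and then invoke convexity/strong-convexity of $\psi_i$ (property (A)) to bound $\la \n\psi_i(\hat\ux_i^t),\hat\ux_i^t-x^*\ra$ from below by $\psi_i(\hat\ux_i^t)-\psi_i(x^*)+\tfrac{\mu}{2}\|\hat\ux_i^t-x^*\|^2$, and the smoothness side ($\hat L$) to absorb the discrepancy between $x^t$ and the search points $\hat\ux_i^t$. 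The quadratic terms $\tfrac{\mu}{2}\|\hat\ux_i^t-x^*\|^2$ feed the $\|\ux_i-x^*\|^2$ potential, while the $\hat L$-smoothness errors are the source of the $(m-1)^2\hat L/(m^2\tau_t)$ and $\hat L/(m(1+\tau_s))$ terms appearing in \eqref{eqn:ss4}--\eqref{eqn:ss6}; the $\tau_t$-weighted convex combination defining $\hat\ux_i^t$ in \eqref{def_tux} is precisely what lets me relate $\|\hat\ux_i^t-x^*\|^2$ back to $\|\ux_i^{t-1}-x^*\|^2$ and $\|\tilde x^t-x^*\|^2$ with the right coefficients. The momentum step \eqref{def_tx} with coefficient $\a_t$ and the relation \eqref{eqn:ss1} are what couple consecutive inner products so the telescoping is clean.

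The main obstacle — and the reason this lemma is harder than Lemma~7 of \cite{lan2017optimal} — is that I must simultaneously track the convergence of the gradient-evaluation points $\ux_i^s$, not merely the primal iterate $x^s$. Concretely, the difficulty is that $\ux_i^t$ is updated only on the coordinate $i=i_t$, so the per-coordinate potential $\|\ux_i^t-x^*\|^2$ evolves stochastically and asynchronously; the conditional-expectation identities in Lemma~\ref{lemma_full} are essential to convert the randomly-updated $\ux_i^t$ into the deterministic surrogate $\hat\ux_i^t$ that the convexity estimates can be applied to. Managing the bookkeeping so that the ``old-gradient'' memory terms $y_i^{t-1}$ (carried in $\tilde y_i^t$) telescope against the corresponding terms at step $t-1$, and verifying that conditions \eqref{eqn:ss2}, \eqref{eqn:ss3}, \eqref{eqn:ss5} exactly guarantee nonnegativity of the leftover coefficients after summation, is the delicate part. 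Once the recursion is summed, conditions \eqref{eqn:ss1}--\eqref{eqn:ss3} collapse the intermediate weights, the boundary conditions \eqref{eqn:ss4} and \eqref{eqn:ss6} handle the final step-$s$ terms, and the tower property over $i_1,\dots,i_s$ yields the stated bound; I would defer the lengthy verification to Appendix~\ref{pf_lemma2} as the authors do.
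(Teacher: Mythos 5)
Your plan is, in outline, the same route the paper takes in Appendix~\ref{pf_lemma2}: start from the optimality (three-point) inequality for the prox step \eqref{def_xt} evaluated at $x=x^*$ (the paper's \eqref{opt_x}), use Lemma~\ref{lemma_full} to pass between $\ux_i^t$, $\hat{\ux}_i^t$ and $\tilde y_i^t$ in expectation, keep the function-value information in Bregman form so that at the end strong convexity gives $\Psi(\ux_i^s,x^*)\ge \tfrac{\mu}{2}\|\ux_i^s-x^*\|^2$ while smoothness gives $\Psi(\ux_i^0,x^*)\le \tfrac{\hat L}{2}\|\ux_i^0-x^*\|^2$, and let \eqref{eqn:ss1}--\eqref{eqn:ss3} collapse the weights while \eqref{eqn:ss4}--\eqref{eqn:ss6} absorb the momentum-coupled cross terms. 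The roles you assign to each condition match the paper's proof exactly.

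However, one concrete step in your sketch is invalid as stated, and it is precisely the point where the paper has to work hardest. For the term $\la \sm \tilde y_i^t, x^t-x^*\ra$ you propose to ``take $\Eb_{i_t}$ and replace $\tilde y_i^t$ by $\n\psi_i(\hat{\ux}_i^t)$ in expectation'' via \eqref{eqn:full2}. But $x^t$ is itself a function of $i_t$ (it is the prox point computed from $\sm\tilde y_i^t$), so $\Eb_{i_t}\la \tilde y_i^t, x^t\ra$ does not factor, and the identity $\Eb_{i_t}[\tilde y_i^t]=\Eb_{i_t}[\n\psi_i(\hat{\ux}_i^t)]$ may only be applied against vectors that are deterministic given the history, namely $\tilde x^t$ and $x^*$. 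This is exactly why the paper decomposes $Q_t$ so that $\d_2^t$ consists of three inner products arranged so that $\Eb_{i_t}\d_2^t=\sm\Eb_{i_t}\la \tilde y_i^t-\n\psi_i(x^*),\tilde x^t-x^t\ra$: the pairing with the random $x^t$ is never averaged away but carried along as a residual, which is then expanded through the momentum relation \eqref{def_tx} (the paper's \eqref{Delta_r1}--\eqref{Delta_r2}) and killed by Young's inequality under \eqref{eqn:ss4}--\eqref{eqn:ss6}. Your sketch correctly anticipates the momentum coupling and which conditions pay for it, so the plan is reparable, but as written the expectation step asserts a false identity, and repairing it forces the three-term decomposition and residual analysis that constitute the bulk of the paper's argument; ``absorbing the discrepancy between $x^t$ and $\hat{\ux}_i^t$ by smoothness'' understates this, since the residual involves $\tilde x^t - x^t$ paired with gradient differences across iterations $t-2,t-1,t$, not a per-iteration smoothness error.
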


With the help of Lemma \ref{lemma_dist},
we now establish the main convergence properties of Algorithm \ref{alg:rand}.

\begin{theorem}\label{Thm_convex}
Let $x^*$ be an optimal solution of  \eqref{problem_convex}, and suppose that the parameters $\{\a_t\}$, $\{\tau_t\}$, $\{\eta_t\}$ and $\{\g_t\}$ are set as in \eqref{parameters1} and  \eqref{parameters2}. 
If $\varphi(x)=\tfrac{\mu}{2}\|x-z\|^2$, for some $z\in X$, then, for any $s \ge 1$, we have
\begin{align*}
\Eb_s\left[\|x^*-x^s\|^2\right]&~\leq \tsy{\a^s(1+2\tfrac{\hat L}{\mu})~\Eb_s\left[\|x^*-x^0\|^2+\sm\|{\ux}^{0}_i-x^0\|^2\right]},\\
\Eb_s\left[\sm\|{\ux}^{s}_i-x^s\|^2\right]&~\leq\tsy{6\a^s(1+2\tfrac{\hat L}{\mu})~\Eb_s\left[\|x^*-x^0\|^2+\sm\|{\ux}^{0}_i-x^0\|^2\right]}.
\end{align*}
\end{theorem}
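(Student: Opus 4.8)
The whole statement reduces to Lemma~\ref{lemma_dist}, so the plan is to specialize that lemma to the constant parameter choice in \eqref{parameters1}--\eqref{parameters2} and then peel off the two desired estimates. The first task is to verify that \eqref{parameters2} satisfies the six conditions \eqref{eqn:ss1}--\eqref{eqn:ss6}. Since $\a_t\equiv\a$, $\g_t=\a^{-t}$, and $\tau_t,\eta_t$ are constant, the three equality-type conditions \eqref{eqn:ss1}--\eqref{eqn:ss3} hold with equality once one records $1+\eta=\tfrac{1}{1-\a}$ and $m(1+\tau)=\tfrac{1}{1-\a}$. The three inequality conditions \eqref{eqn:ss4}--\eqref{eqn:ss6} are where the precise value of $\a$ enters; here I would first isolate the algebraic identities that drive everything, namely $\hat L/\mu=c$, $\tau(1+\tau)=4c/m$, and $\tfrac{\tau}{1-\a}=4c$, all of which follow from $c=2+L/\mu$ and the definition of $\a$. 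Using $\tfrac{\tau}{1-\a}=4c=4\hat L/\mu$, conditions \eqref{eqn:ss4} and \eqref{eqn:ss5} both collapse to the single requirement $\a\ge(1-\tfrac1m)^2$, while \eqref{eqn:ss6} reduces to $\a\ge\tau(1-\a)$; since $1-\a=\tfrac{2}{m(\sqrt{1+16c/m}+1)}\le\tfrac1m$, both are immediate. I expect this verification to be the main obstacle, since it is the only place that needs the precise (and somewhat opaque) form of $\a$.

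With the conditions in hand I would substitute $V_\varphi(x,y)=\tfrac{\mu}{2}\|x-y\|^2$ (valid because $\varphi=\tfrac{\mu}{2}\|\cdot-z\|^2$) together with the constant parameter values into the conclusion of Lemma~\ref{lemma_dist}. Cancelling the common factor $\tfrac{\mu}{2}\a^{-s}$ turns the lemma into
\[
(1+\eta)\,\Eb_s\|x^*-x^s\|^2+\tfrac{1+\tau}{2}\smn\Eb_s\|\ux^s_i-x^*\|^2
\le \a^{s-1}\Big(\eta\,\Eb_s\|x^*-x^0\|^2+\big[(1+\tau)-\tfrac1m\big]\tfrac{\hat L}{\mu}\smn\Eb_s\|\ux^0_i-x^*\|^2\Big),
\]
in which both terms on the left are nonnegative. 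The key simplifying identities are $\tfrac{\eta}{1+\eta}=\a$ and $(1+\tau)-\tfrac1m=\tfrac{\a}{m(1-\a)}$, which are exactly what makes the stray factor $\a^{s-1}$ telescope into the advertised $\a^s$.

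To obtain the first bound I would discard the $\ux^s_i$ term on the left and divide by $1+\eta$. The $x^*$-$x^0$ term then contributes $\a^{s-1}\cdot\tfrac{\eta}{1+\eta}=\a^s$, and the $\ux^0_i$-$x^*$ term contributes $\a^s\tfrac{\hat L}{\mu}\sm$ after using $\tfrac{(1+\tau)-1/m}{1+\eta}=\tfrac{\a}{m}$. Applying $\|\ux^0_i-x^*\|^2\le 2\|\ux^0_i-x^0\|^2+2\|x^0-x^*\|^2$ and collecting terms (with $2\hat L/\mu\le 1+2\hat L/\mu$) yields $\Eb_s\|x^*-x^s\|^2\le\a^s(1+2\tfrac{\hat L}{\mu})\,\Eb_s[\|x^*-x^0\|^2+\sm\|\ux^0_i-x^0\|^2]$.

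For the second bound I would instead discard the $\|x^*-x^s\|^2$ term and divide by $\tfrac{1+\tau}{2}$. Here $\tfrac{\eta}{1+\tau}=\a m$ and $\tfrac{(1+\tau)-1/m}{1+\tau}=\a$, producing $\smn\Eb_s\|\ux^s_i-x^*\|^2\le 2m\a^s\Eb_s\|x^*-x^0\|^2+2\a^s\tfrac{\hat L}{\mu}\smn\Eb_s\|\ux^0_i-x^*\|^2$; dividing by $m$ (so the $m$ cancels against $\sm$) and again using the triangle inequality gives $\sm\Eb_s\|\ux^s_i-x^*\|^2\le 2\a^s(1+2\tfrac{\hat L}{\mu})R$, where $R:=\Eb_s[\|x^*-x^0\|^2+\sm\|\ux^0_i-x^0\|^2]$. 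Finally I would pass from $\ux^s_i-x^*$ to $\ux^s_i-x^s$ via $\|\ux^s_i-x^s\|^2\le 2\|\ux^s_i-x^*\|^2+2\|x^*-x^s\|^2$, bounding the first piece by $2\a^s(1+2\tfrac{\hat L}{\mu})R$ and the second by the already-proved first bound, so that $2\cdot2+2\cdot1=6$ delivers the stated factor. Beyond the parameter verification, the only non-routine aspect is keeping the constant identities straight so the coefficients collapse exactly to $\a^s(1+2\hat L/\mu)$.
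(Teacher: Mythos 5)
Your proposal is correct and follows essentially the same route as the paper: specialize Lemma~\ref{lemma_dist} with the constant parameters, use $V_\varphi(x,y)=\tfrac{\mu}{2}\|x-y\|^2$, peel off each term on the left separately, apply $\|u-w\|^2\le 2\|u-v\|^2+2\|v-w\|^2$ to replace $\ux_i^0-x^*$ by $\ux_i^0-x^0$ and $x^0-x^*$, and finally combine the two resulting bounds to get the factor $6=2\cdot 2+2\cdot 1$. The only difference is that you carry out the verification of conditions \eqref{eqn:ss1}--\eqref{eqn:ss6} explicitly (via the identities $\hat L/\mu=c$, $\tau/(1-\a)=4c$, and $1-\a\le 1/m$), which the paper dismisses as ``easy to check''; your identities and reductions there are all correct.
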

\begin{proof}
It is easy to check that \eqref{parameters1} and  \eqref{parameters2} satisfy conditions \eqref{eqn:ss1}, \eqref{eqn:ss2}, \eqref{eqn:ss3} \eqref{eqn:ss4}, \eqref{eqn:ss5}, and \eqref{eqn:ss6}.
Then by Lemma \ref{lemma_dist}, we have 
\begin{align}\label{thepx}
\Eb_s\left[V_\varphi(x^*,x^s)+\smn\tfrac{\mu}{4m}\|{\ux}^{s}_i-x^*\|^2\right]\leq \a^s\Eb_s\left[V_\varphi(x^*,x^0)+\smn\tfrac{\hat L}{2m}\|{\ux}^{0}_i-x^*\|^2\right].
\end{align}
Since $\varphi(x)=\tfrac{\mu}{2}\|x-z\|^2$, we have $V_\varphi(x^*,x^s) = \tfrac{\mu}{2}\|x^*-x^s\|^2$, and $V_\varphi(x^0,x^s) =\tfrac{\mu}{2}\|x^*-x^0\|^2$. Plugging into \eqref{thepx}, we obtain the following two relations: 
\begin{align*}
\Eb_s\left[\|x^*-x^s\|^2\right]&~\leq \a^s\Eb_s\left[\|x^*-x^0\|^2+\smn\tfrac{\hat L}{mr}\|{\ux}^{0}_i-x^*\|^2\right]\nn\\
&~\leq\a^s\Eb_s\left[\|x^*-x^0\|^2+\smn\tfrac{\hat L}{mr}(2\|{\ux}^{0}_i-x^0\|^2+2\|x^0-x^*\|^2)\right]\nn\\
&~=\tsy{\a^s~\Eb_s\left[(1+2\tfrac{\hat L}{\mu})\|x^*-x^0\|^2+\smn\tfrac{2\hat L}{m\mu}\|{\ux}^{0}_i-x^0\|^2\right]}\nn\\
&~\leq \tsy{\a^s(1+2\tfrac{\hat L}{\mu})~\Eb_s\left[\|x^*-x^0\|^2+\smn\tfrac{1}{m}\|{\ux}^{0}_i-x^0\|^2\right]},\\
\Eb_s\left[\sm\|{\ux}^{s}_i-x^*\|^2\right]
&~\leq \tsy{2\a^s\Eb_s\left[\|x^*-x^0\|^2+\smn\tfrac{\hat L}{m\mu}\|{\ux}^{0}_i-x^*\|^2\right]}\nn\\
&~\leq \tsy{2\a^s(1+\frac{2\hat L}{\mu})\Eb_s\left[\|x^*-x^0\|^2+\sm\|{\ux}^{0}_i-x^0\|^2\right]}.
\end{align*}
In view of the above two relations, we have
\begin{align*}
\Eb_s\left[\sm\|{\ux}^{s}_i-x^s\|^2\right]
&~\leq \tsy{\Eb_s\left[\sm2(\|{\ux}^{s}_i-x^*\|^2+\|x^*-x^s\|^2)\right]}\nn\\
&~=2\Eb_s\left[\sm\|{\ux}^{s}_i-x^*\|^2\right]+2\Eb_s\|x^*-x^s\|^2\nn\\
&~\leq \tsy{6\a^s(1+2\tfrac{\hat L}{\mu})~\Eb_s\left[\|x^*-x^0\|^2+\sm\|{\ux}^{0}_i-x^0\|^2\right]}.
\end{align*}
\end{proof}

In view of Theorem \ref{Thm_convex}, Algorithm \ref{alg:rand} applied to subproblem~\eqref{problem_convex} exhibits a fast linear rate of convergence. 
Actually, as shown below we do not need to solve the subproblem too accurately, and a constant number of iteration of Algorithm \ref{alg:rand} 
for each subproblem is enough to guarantee the convergence of Algorithm \ref{alg_non}.

\begin{lemma}\label{lemma_final}
Let the number of inner iterations $s\geq \left\lceil-\log (7M/6)/\log \a\right\rceil$ with $M:=\tsy{6(5+2L/\mu)}$ be given.
Also let the iterates $\bx^{\l}$, $\l = 1, \ldots, k$, be generated by Algorithm \ref{alg_non}, 
and $\hat \l$ be randomly selected from $[k]$.
Then 
\begin{align*}
\Eb \|x^{\hat\l}_*-\bx^{\hat\l-1}\|^2&\leq \tfrac{4(1-M\a^s)}{k\mu(6-7M\a^s)}[f(\bx^{0})-f(x^{*})],\\
\Eb\|x^{\hat\l}_*-\bx^{\hat\l}\|^2&\leq \tfrac{2M\a^s}{3k\mu(6-7M\a^s)}[f(\bx^{0})-f(x^{*})],
\end{align*}
where $x^*$ and $x^{\l}_*$ are the optimal solutions to problem \eqref{problem_non} and the $\l$-th subproblem \eqref{subprob_rand}, respectively.
\end{lemma}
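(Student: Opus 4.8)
The plan is to exploit the uniform choice of $\hat\l$ to turn both claims into bounds on sums. Writing $a_\l:=\|x_*^\l-\bx^{\l-1}\|^2$, $c_\l:=\|x_*^\l-\bx^\l\|^2$, and $b_\l:=\sm\|\bar\ux^{\l-1}_i-\bx^{\l-1}\|^2$, the random index gives $\Eb\|x_*^{\hat\l}-\bx^{\hat\l-1}\|^2=\tfrac1k\sum_{\l=1}^k\Eb[a_\l]$ and $\Eb\|x_*^{\hat\l}-\bx^{\hat\l}\|^2=\tfrac1k\sum_{\l=1}^k\Eb[c_\l]$, so it suffices to bound $A:=\sum_\l\Eb[a_\l]$ and $C:=\sum_\l\Eb[c_\l]$ in terms of $D^0:=f(\bx^0)-f(x^*)$, where I also set $B:=\sum_\l\Eb[b_\l]$.

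First I would translate the inner convergence into a recursion on these quantities. Subproblem $\l$ is precisely \eqref{problem_convex} started at $x^0=\bx^{\l-1}$, $\ux^0_i=\bar\ux^{\l-1}_i$, with minimizer $x_*^\l$, and $b_{\l+1}=\sm\|\bar\ux^\l_i-\bx^\l\|^2$ is exactly the second output quantity in Theorem~\ref{Thm_convex}. Hence, conditioning on the history $\mathcal F_{\l-1}$ (with respect to which $a_\l,b_\l$ are deterministic) and using $\tfrac M6=1+2\hat L/\mu$ with $\hat L=L+2\mu$, Theorem~\ref{Thm_convex} gives $\Eb[c_\l\mid\mathcal F_{\l-1}]\le\tfrac M6\a^s(a_\l+b_\l)$ and $\Eb[b_{\l+1}\mid\mathcal F_{\l-1}]\le M\a^s(a_\l+b_\l)$. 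Taking total expectations, summing over $\l$, and using the initialization $\bar\ux^0_i=\bx^0$ (so that $b_1=0$), the second inequality telescopes to $B\le M\a^s(A+B)$, i.e.\ $B\le\tfrac{M\a^s}{1-M\a^s}A$; substituting into the summed first inequality yields the single relation $C\le\tfrac{M\a^s}{6(1-M\a^s)}A$. The hypothesis $s\ge\lceil-\log(7M/6)/\log\a\rceil$ is exactly what forces $M\a^s\le\tfrac67<1$, keeping all these denominators positive.

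The crux is a sharp per-iteration descent $f(\bx^{\l-1})-f(\bx^\l)\ge\tfrac{3\mu}2 a_\l-\tfrac{3\mu}2 c_\l$. For the gain I would use that $x_*^\l$ minimizes $\Psi_\l(x):=\sm\psi_i(x)+\varphi(x)=f(x)+\tfrac{3\mu}2\|x-\bx^{\l-1}\|^2$ over $X$, so $\Psi_\l(x_*^\l)\le\Psi_\l(\bx^{\l-1})=f(\bx^{\l-1})$ gives $f(\bx^{\l-1})-f(x_*^\l)\ge\tfrac{3\mu}2 a_\l$ ($2\mu$-strong convexity even gives $\tfrac{5\mu}2 a_\l$). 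For the loss caused by solving only inexactly I would bound $f(\bx^\l)-f(x_*^\l)$ from above using the $L$-smoothness of $f$ together with the first-order optimality of $x_*^\l$ — writing $\n f(x_*^\l)=3\mu(\bx^{\l-1}-x_*^\l)-g$ with $g\in N_X(x_*^\l)$ and invoking $\la g,\bx^\l-x_*^\l\ra\le0$ since $\bx^\l\in X$ — and then combining with the prox slack term $\tfrac{3\mu}2\|\bx^\l-\bx^{\l-1}\|^2$ to absorb the resulting cross term $\la\bx^{\l-1}-x_*^\l,\bx^\l-x_*^\l\ra$, aiming to land both coefficients at $\tfrac{3\mu}2$. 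Summing the descent over $\l=1,\dots,k$ and using $f(\bx^k)\ge f(x^*)$ gives $\tfrac{3\mu}2 A-\tfrac{3\mu}2 C\le D^0$.

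Finally I would solve the two-inequality linear system $\tfrac{3\mu}2 A-\tfrac{3\mu}2 C\le D^0$ and $C\le\tfrac{M\a^s}{6(1-M\a^s)}A$. Eliminating $C$ gives $\tfrac{3\mu}2\bigl(1-\tfrac{M\a^s}{6(1-M\a^s)}\bigr)A\le D^0$, i.e.\ $A\le\tfrac{4(1-M\a^s)}{\mu(6-7M\a^s)}D^0$, and back-substitution gives $C\le\tfrac{2M\a^s}{3\mu(6-7M\a^s)}D^0$; dividing by $k$ produces the two claimed bounds. I expect the main obstacle to be establishing the descent with exactly the constants $\tfrac{3\mu}2$: the inexactness term $f(\bx^\l)-f(x_*^\l)$ genuinely requires an upper bound on $f$ (hence $L$-smoothness) and, in the constrained case, carries a normal-cone contribution and a cross term $\sqrt{a_\l c_\l}$, so calibrating these against the prox slack so that no stray $L$ survives and the coefficient lands at exactly $\tfrac{3\mu}2$ (which is what makes $6-7M\a^s$, rather than an $L$-dependent quantity, appear in the denominators) is the delicate step on which the clean final bounds rest.
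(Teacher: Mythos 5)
Your reduction to the two sums $A=\sum_\l\Eb[a_\l]$, $C=\sum_\l\Eb[c_\l]$, the recursion $C\le\tfrac{M\a^s}{6(1-M\a^s)}A$ obtained from Theorem~\ref{Thm_convex} (this is exactly the paper's inequality \eqref{iteration}, derived by summation instead of induction), and the final linear-system step are all correct and match the paper. The genuine gap is in your ``crux'' descent inequality $f(\bx^{\l-1})-f(\bx^\l)\ge\tfrac{3\mu}{2}a_\l-\tfrac{3\mu}{2}c_\l$, which telescopes $f$ along the \emph{iterates} $\bx^\l$ and therefore forces you to upper bound the inexactness term $f(\bx^\l)-f(x_*^\l)$ by a multiple of $c_\l$. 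This cannot be done. In the constrained case the first-order expansion of $f$ at $x_*^\l$ contains $-\la g,\bx^\l-x_*^\l\ra$ with $g\in N_X(x_*^\l)$, and this term is \emph{nonnegative} (it has the wrong sign for an upper bound) and of unbounded magnitude, so $f(\bx^\l)-f(x_*^\l)$ scales like $\|\n\Psi_\l\text{-multiplier}\|\sqrt{c_\l}$ rather than $c_\l$; no inequality of the form $f(\bx^\l)-f(x_*^\l)\le K c_\l$ with $K$ depending only on $L,\mu$ holds. Even in the unconstrained case, smoothness plus the optimality condition $\n f(x_*^\l)=3\mu(\bx^{\l-1}-x_*^\l)$ gives at best $f(\bx^\l)-f(x_*^\l)\le 3\mu\sqrt{a_\l c_\l}+\tfrac{L}{2}c_\l$, so after Young's inequality your per-iteration descent has a loss coefficient of order $\mu/\epsilon+L$ on $c_\l$, not $\tfrac{3\mu}{2}$. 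Summing and inserting $C\le\rho A$ with $\rho=\tfrac{M\a^s}{6(1-M\a^s)}$ then requires $\rho\lesssim\mu/L$ to keep the coefficient of $A$ positive; but the stated hypothesis on $s$ only guarantees $M\a^s\le\tfrac{6}{7}$, i.e.\ $\rho\le 1$, so the stray $L$ cannot be absorbed and the claimed constants (and denominators $6-7M\a^s$) are unreachable by this route.

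The paper avoids this entirely by telescoping $f$ along the \emph{subproblem minimizers} $x_*^\l$ rather than the iterates: since $x_*^{\l-1}\in X$ is feasible for the $\l$-th subproblem, optimality of $x_*^\l$ gives
\begin{equation*}
f(x_*^\l)+\tfrac{3\mu}{2}\|x_*^\l-\bx^{\l-1}\|^2\;\le\; f(x_*^{\l-1})+\tfrac{3\mu}{2}\|x_*^{\l-1}-\bx^{\l-1}\|^2 ,
\end{equation*}
i.e.\ $\tfrac{3\mu}{2}a_\l\le f(x_*^{\l-1})-f(x_*^\l)+\tfrac{3\mu}{2}c_{\l-1}$ (with $x_*^0:=\bx^0$, $c_0=0$). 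This is an exact comparison of subproblem objective values: no smoothness of $f$, no normal-cone term, and no cross term ever appear, and the slack enters with coefficient exactly $\tfrac{3\mu}{2}$. Summing over $\l$, bounding the telescoped part by $f(\bx^0)-f(x^*)$, and combining with the same recursion $C\le\tfrac{M\a^s}{6(1-M\a^s)}A$ yields precisely the two claimed bounds. If you replace your descent step by this comparison (keeping everything else in your argument), the proof goes through.
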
	
\begin{proof}
According to Theorem \ref{Thm_convex} (with $\hat L = 2\mu+ L$), we have,  for $\l\geq 1$, 
\begin{align}
 \Eb\|x_*^{\l}-\bx^{\l}\|^2 &\leq \tsy{\a^s(5+\tfrac{2L}{\mu})\Eb\left[\|x^{\l}_*-\bx^{\l-1}\|^2+\smn\tfrac{1}{m}\|\bar{\ux}^{\l-1}_i-\bx^{\l-1}\|^2\right]}\nn\\
 &\leq \tsy{\tfrac{M\a^s}{6}~\Eb\left[\|x^{\l}_*-\bx^{\l-1}\|^2+\smn\tfrac{1}{m}\|\bar{\ux}^{\l-1}_i-\bx^{\l-1}\|^2\right]},\label{starkk}\\
\Eb\left[\sm\|\bar{\ux}^{\l}_i-\bx^{\l}\|^2\right]&\leq \tsy{4\a^s(5+\tfrac{2L}{\mu})~\Eb\left[\|x_*^{\l}-\bx^{\l-1}\|^2+\smn\tfrac{1}{m}\|{\bar\ux}^{\l-1}_i-\bx^{\l-1}\|^2\right]}\nn\\
&\leq\tsy{M\a^s~\Eb\left[\|x_*^{\l}-\bx^{\l-1}\|^2+\smn\tfrac{1}{m}\|{\ux}^{\l-1}_i-\bx^{\l-1}\|^2\right]}.\label{hatkk}
\end{align}
By induction on \eqref{hatkk} and noting $\bar{\ux}^{0}_i=\bx^0$, $i=1,\ldots,m$, we have 
\[\Eb\left[\sm\|\bar{\ux}^{\l}_i-\bx^{\l}\|^2\right]\leq \tsy{\sum_{j=1}^{\l}} (M\a^s)^{\l-j+1}\Eb \|x_*^{j}-\bx^{j-1}\|^2.\]
In view of the above relation and \eqref{starkk}, for $\l\geq 2$, we have 
\[\Eb\|x_*^{\l}-\bx^{\l}\|^2 \leq \tsy{\tfrac{M\a^s}{6}~\Eb\left[\|x^{\l}_*-\bx^{\l-1}\|^2+ \sum_{j=1}^{\l-1} (M\a^s)^{\l-j}\|x_*^{j}-\bx^{j-1}\|^2 \right]   }.\]
Summing up both sides of the above inequality from $\l=1$ to $k$, we then obtain
\begin{align}\label{iteration}
\sl\Eb\|x_*^{\l}-\bx^{\l}\|^2 &\leq \tsy{\tfrac{M\a^s}{6}\Eb\left[\|x^{1}_*-\bx^{0}\|^2+\sll\left(\|x^{\l}_*-\bx^{\l-1}\|^2+ \sum_{j=1}^{\l-1} (M\a^s)^{\l-j}\|x_*^{j}-\bx^{j-1}\|^2 \right)\right]    }\nn\\
& = \tsy{\tfrac{M\a^s}{6}~\Eb\left[\|x^k_*-\bx^{k-1}\|^2+\slk\left(\tfrac{1}{1-M\a^s}-\tfrac{(M\a^s)^{k+1-\l}}{1-M\a^s}\right)\|x^{\l}_*-\bx^{\l-1}\|^2\right]    }\nn\\
& \leq \tsy{\tfrac{M\a^s}{6(1-M\a^s)}\sl\Eb\|x^{\l}_*-\bx^{\l-1}\|^2    }.
\end{align}
Using the fact that $x^{\l}_*$ is optimal to the $\l$-th subproblem, and letting $x^{0}_*$ = $\bx^0$ ($x^0_*$ is a free variable), we have
\[\sl [\psi^{\l}(x_*^{\l})+\varphi^{\l}(x_*^{\l})]\leq\sl [\psi^{\l}(x_*^{\l-1})+\varphi^{\l}(x_*^{\l-1})],\]
which, in view of the definition of $\psi^{\l}$ and $\varphi^{\l}$, then implies that 
\beq\label{rand_non1}
\sl \Eb[f(x^{\l}_*)+\tfrac{3\mu}{2} \|x^{\l}_*-\bx^{\l-1}\|^2]
 \le
 \sl \Eb[f(x^{\l-1}_*)+\tfrac{3\mu}{2} \|x_{*}^{\l-1}-\bx^{\l-1}\|^2].
\eeq
Combining \eqref{iteration} and \eqref{rand_non1}, we obtain
\begin{align}\label{rand_non2}
\tsy{\tfrac{3\mu}{2}} \sl \Eb\|x^{\l}_*-\bx^{\l-1}\|^2
&\le\sl \Eb\{f(x^{\l-1}_*)-f(x^{\l}_*)\}+ \tsy{\tfrac{3\mu}{2}}\sl\Eb\|x_{*}^{\l-1}-\bx^{\l-1}\|^2\nn\\
&\le\sl \Eb\{f(x^{\l-1}_*)-f(x^{\l}_*)\}+ \tsy{\tfrac{3\mu}{2}}\sl\Eb\|x_{*}^{\l}-\bx^{\l}\|^2\nn\\
&\le\sl \Eb\{f(x^{\l-1}_*)-f(x^{\l}_*)\}+ \tsy{\tfrac{3\mu}{2}\tfrac{M\a^s}{6(1-M\a^s)}\sl\Eb\|x^{\l}_*-\bx^{\l-1}\|^2 }.
\end{align}
Using \eqref{rand_non2},  \eqref{iteration} and the condition on $s$, we have
\begin{align*}
\sl\Eb\|x^{\l}_*-\bx^{\l-1}\|^2&\leq \tfrac{4(1-M\a^s)}{\mu(6-7M\a^s)}[f(\bx^{0})-f(x^{*})],\\
\sl\Eb\|x^{\l}_*-\bx^{\l}\|^2&\leq \tfrac{2M\a^s}{3\mu(6-7M\a^s)}[f(\bx^{0})-f(x^{*})].
\end{align*}
Our results then immediately follow since
$\hat\l$ is chosen randomly in $[k]$.
\end{proof}

\vgap

Now we are ready to prove Theorem \ref{Thm_main} using all the previous results we have developed.

\noindent {\bf Proof of Theorem \ref{Thm_main}.}
By the optimality condition of the $\hat\l$-th subproblem \eqref{subprob_rand},  
\beq            
\n \psi^{\hat\l}(x_*^{\hat\l})+\n \varphi^{\hat\l}(x_*^{\hat\l})\in -N_X(x_*^{\hat\l}). 
\eeq
From the definition of $\psi^{\hat\l}$ and $\varphi^{\hat\l}$, we have 
\begin{align}
\n f(x_*^{\hat\l})+3\mu(x_*^{\hat\l}-\bx^{\hat{\l}-1})\in -N_X(x_*^{\hat\l}).
\end{align}
From the optimality condition of \eqref{def_xt}, we obtain
\begin{align}\label{opt_x}
\varphi(x^t)-\varphi(x^*)+\lla \sm \tilde y_i^t, x^t-x^*\rra \leq \eta_t V_\varphi(x^*,x^{t-1})-(1+\eta_t)V_\varphi(x^*,x^t)-\eta_t V_\varphi(x^t,x^{t-1}).
\end{align}
Using the above relation and Lemma 
\ref{lemma_final}, we have
\begin{align*}
\Eb\|\bar x^{\hat\l-1}-x^{\hat\l}_*\|^2&\leq \tfrac{4(1-M\a^s)}{k\mu(6-7M\a^s)}[f(\bx^{0})-f(x^{*})] \leq \tfrac{4}{k\mu}[f(\bx^{0})-f(x^{*})],\\
\Eb\left[d\left( \n f(x^{\hat\l}_*),-N_X(x_*^{\hat\l})\right)\right]^2
&\leq \Eb\|3\mu(\bx^{\hat\l-1}-x^{\hat\l}_*)\|^2 \leq \tfrac{36\mu}{k}[f(\bx^{0})-f(x^{*})],\\
\Eb\|\bar x^{\hat\l}-x^{\hat\l}_*\|^2&\leq \tfrac{2M\a^s}{3k\mu(6-7M\a^s)}[f(\bx^{0})-f(x^{*})]
\leq \tfrac{4M\a^s}{k\mu}[f(\bx^{0})-f(x^{*})]\\
&\leq \tfrac{4\mu}{kL^2}[f(\bx^{0})-f(x^{*})].
\end{align*}
\endproof

\section{Nonconvex multi-block optimization with linear constraints}\label{Sec 3}
In this section, we present a randomized accelerated proximal dual (RapDual) algorithm for solving the nonconvex multi-block optimization problem 
in \eqref{c:problem_non} and show the potential advantages in terms of the total number of block updates.  

As mentioned in Section 1, we assume the inverse of the last block of the constraint matrix is easily computable. Hence, 
denoting $\bfA_i = A_m^{-1} A_i$, $i = 1, \ldots, m-1$ and $\bfb = A_m^{-1}$, we can reformulate  
problem \eqref{c:problem_non} as
\begin{align}\label{c:problem_convex} 
\min_{\Bx\in X, ~x_m\in \R^{n}} &~ f(\Bx)+f_m(x_m),\nn\\
\text{s.t. } &~\bfA \Bx +x_m= \bfb,
\end{align}
where $f(\Bx):=\sum_{i=1}^{m-1} f_i(x_i)$,  $X= X_1 \times \ldots \times X_{m-1}$, $\bfA =[\bfA_{1},\ldots, \bfA_{m-1}]$, and ${\textbf{x}} = (x_1,\ldots,x_{m-1})$.
It should be noted that except for some special cases, the computation of $A_m^{-1}$
requires up to ${\cal O}(n^3)$ arithmetic operations, which will be a one-time computational cost added on top of
the overall computational cost of our algorithm  (see Remark~\ref{remarkinverse} below for more discussions). 
%

One may also reformulate  problem \eqref{c:problem_convex} in the form of  \eqref{problem_non} and directly apply
Algorithm \ref{alg_non}  to solve it.  More specifically, substituting $x_m$ with $\bfb-\bfA\Bx$ in the objective function of \eqref{c:problem_convex}, we obtain
\begin{equation} \label{reformulation}
\min_{\Bx\in X} ~ \tsum_{i=1}^{m-1}f_i(B_i\Bx)+f_m(\bfb-\bfA\Bx),
\end{equation}
where $B_i=(\textbf{0},\ldots,I,\ldots, \textbf{0})$ with the $i$-th block given a $d_i\times d_i$ identity matrix and hence $x_i = B_i\Bx$. 
However, this method will be inefficient since we enlarge the dimension of each $f_i$ from $d_i$ to $\sum_{i=1}^{m-1}d_i$ and as a result, every block has to be updated in each iteration.
One may also try to apply a nonconvex randomized block coordinate descent method~\cite{DangLan2015} to solve the above reformulation. However,
such methods do not apply to the case when $f_i$ are both nonconex and nonsmooth.
This motivates us to design the new RapDual method which
requires to update only a single block at a time, applies to the case when $f_i$ is nonsmooth
and achieves an accelerated rate of convergence when $f_i$ is smooth.

\subsection{The Algorithm}\label{sec_cSGD_alg}
The main idea of RapDual is similar to the one used to design the RapGrad method introduced in Section \ref{sec_SGD_alg}.
Given the proximal points $\bBx^{\l-1}$ and $\bx_m^{\l-1}$ from the previous iteration, we define a new
proximal subproblem as 
\begin{align}\label{csubprob}
\min_{\Bx\in X,x_m\in \R^{n}} &~  \psi(\Bx)+ \psi_m(x_m)\nn\\
\text{s.t. } &~\bfA\Bx+x_m = \bfb,
\end{align}
where $\psi(\Bx):=f(\Bx)+ \mu\|\Bx-\bBx^{\l-1}\|^2$ and $\psi_m(x_m):=f_m(x_m)+ \mu\|x_m- x_m^{\l-1}\|^2$.
Obviously,  RaGrad does not apply directly to this type of subproblem. 
In this subsection, 
we present a new randomized algorithm, named the randomized accelerated dual (RaDual) method
to solve the subproblem in \eqref{csubprob}, which will be iteratively called by the RapDual method 
to solve problem~\eqref{c:problem_convex}. 

RaDual (c.f. Algorithm \ref{calg:rand}) can be viewed as a randomized primal-dual type method. Indeed, by the method of multipliers and Fenchel conjugate duality, we have
\begin{align}\label{eqn:saddle}
&\min_{\Bx\in X,x_m\in \R^{n}}   \{ \psi(\Bx)+\psi_m(x_{m}) + \max_{y\in \mathbb{R}^{n}} \lla \tsum_{i=1}^{m}\bfA_ix_i -\bfb , y\rra \}
\nn \\
=&\min_{\Bx\in X}  \{\psi(\Bx) + \max_{y\in \mathbb{R}^{n}} [\lla \bfA\Bx-\bfb , y\rra+ \min_{x_m\in\R^{n}}\left\{\psi_m(x_m)+\la x_m,y\ra\right\}]\}\nn\\
=&\min_{\Bx\in X}  \{ \psi(\Bx) + \max_{y\in \mathbb{R}^{n}} [\lla \bfA\Bx-\bfb , y\rra-h(y) ]\},
\end{align}
where $h(y):=-\min_{x_m\in\R^{ n}} \{\psi_m(x_m)+\la x_m,y \ra\}=\psi_m^*(-y)$.
Observe that the above saddle point problem 
is both strongly convex in $\Bx$  and strongly concave in $y$.  Indeed, $\psi(\Bx)$ is strongly convex due to the added proximal term. Moreover,
since $\psi_m$ has $\hat L$-Lipschitz continuous gradients, $h(y)=\psi_m^*(-y)$ is $1/\hat L$-strongly convex. 
Using the fact that $h$ is strongly convex,
we can see that \eqref{c:algo2-0}-\eqref{c:algo2} in Algorithm~\ref{calg:rand} is equivalent to a dual mirror-descent step with a properly chosen distance generating function $V_{  h}(y,y^{t-1})$.
Specifically,
\begin{align*}
y^t &=\textstyle\argmin\limits_{y \in \R^n}~  h(y)+\la -\bfA\tilde{\Bx}^t+\bfb ,y\ra + \tau_t V_{  h}(y,y^{t-1})\\
&=\textstyle\argmax\limits_{y \in \R^n}~  \la (\bfA\tilde{\Bx}^t-\bfb +\tau_t\nabla   h(y^{t-1}))/(1+\tau_t),y\ra -h(y)\\
&=\nabla   h^*[(\bfA\tilde{\Bx}^t-\bfb +\tau_t\nabla   h(y^{t-1}))/(1+\tau_t)].
\end{align*}
If we set $g^0=\nabla   h(y^0)=-x_m^0$, then it is easy to see by induction that
$g^t = (\tau_t g^{t-1}+\bfA\tilde{\Bx}^t-\bfb )/(1+\tau_t)$, 
and $y^t = \nabla   h^*(g^t)$ for all $t\geq 1$. Moreover, $  h^*(g) = \max_{y \in \R^n} \la g,y\ra - h(y)= \max_{y \in\R^n} \la g,y\ra - \psi_m^*(-y)=  \psi_m(-g)$, 
thus $y^t=-\nabla \psi_m(-g^t)$ is the negative gradient of $\psi_m$ at  point $-g^t$. Therefore, Algorithm \ref{calg:rand} does not explicitly depend on the function $h$, even though the
above analysis does.

Each iteration of Algorithm~\ref{calg:rand} updates only a randomly selected block $i_t$ in \eqref{c:algo3}, making it especially favorable 
when the number of blocks $m$ is large. However, similar difficulty as mentioned in  Section \ref{sec_SGD_alg} also appears 
when we integrate this algorithm with proximal-point type method to yield the final RapDual method in Algorithm \ref{calg_non}. Firstly, 
Algorithm~\ref{calg:rand} also keeps a few intertwined primal and dual sequences, thus we need to carefully decide the input and output of 
Algorithm~\ref{calg:rand} so that information from previous iterations of RapDual is fully used. Secondly,
the number of iterations performed by Algorithm~\ref{calg:rand} to solve each subproblem plays a vital role in the convergence rate of RapDual, which should be carefully predetermined.
 
Algorithm \ref{calg_non} describes the basic scheme of RapDual. At the beginning, all the blocks are initialized using the output from solving 
the previous subproblem. Note that $x_m^0$ is used to initialize $g$, which further helps compute the dual variable $y$ without using the conjugate function $h$ of $\psi_m$. 
We will derive 
the convergence result for Algorithm~\ref{calg:rand} in terms of primal variables and  construct  relations between successive search points $(\Bx^{\l},x_m^l)$,
which will be used to prove the final convergence of RapDual. 
\begin{algorithm}[H]  
\caption{RapDual for nonconvex multi-block  optimization}\label{calg_non}
\begin{algorithmic}
\STATE Compute $A_m^{-1}$ and reformulate problem~\eqref{c:problem_non} as \eqref{c:problem_convex}. 
\STATE{Let $\bBx^0\in X$, $\bx_m^0 \in\R^n$, such that $\bfA\bBx^0+\bx_m^0=\bfb $, and $\bar y^0=-\n f_m(\bar x^0_m)$.}
\FOR{$\ell = 1, \ldots, k$}
\STATE{Set $\Bx^{-1}=\Bx^0 = \bBx^{\l-1}$, $x_m^0=\bx_m^{\ell-1}$.}
\STATE{Run Algorithm \ref{calg:rand} with input $\Bx^{-1}$, $\Bx^{0}$, $x_m^0$ and $s$ to solve the following subproblem
}
\begin{align}\label{convex}
\min_{\Bx\in X,x_m\in \R^{n}} &~ \psi(\Bx)+\psi_m(x_m)\nn\\
\text{s.t. } &~\bfA\Bx+x_m =\bfb ,
\end{align}
to compute output $(\Bx^s, x_m^s)$,
where $\psi(\Bx)\equiv\psi^{\l}(\Bx):= f(\Bx)+ \mu\|\Bx-\bBx^{\l-1}\|^2$ and $\psi_m(x)\equiv\psi_m^{\l}(x):= f_m(x_m)+ \mu\|x_m-\bx_m^{\l-1}\|^2$.
\STATE{Set $\bBx^{\l}= \Bx^s$, $\bx_m^{\l}=x_m^{s}$.}
\ENDFOR
\RETURN $(\bBx^{\hat\l}, \bx_m^{\hat\l})$ for some random $\hat\l\in [k]$.
\end{algorithmic}
\end{algorithm}

\begin{algorithm}[H]
\caption{RaDual for solving subproblem \eqref{csubprob}}
\label{calg:rand}
\begin{algorithmic}
\STATE{Let $\Bx^{-1}=\Bx^0\in X$,  $x_m\in\R^{n}$, number of iterations $s$
and nonnegative parameters $\{\a_t\}$, $\{\tau_t\}$, $\{\eta_t\}$ be given. Set $g^0 = -x_m^0$.}
\FOR{$t = 1, \ldots, s$}
\STATE{1. Generate a random variable $i_t$ uniformly distributed over $[m-1]$.}
\STATE{2. Update $x^t$ and $y^t$ according to}
\begin{align}
\tilde{\Bx}^t =&~ \a_t (\Bx^{t-1}-\Bx^{t-2}) + \Bx^{t-1},\label{c:algo1}\\
g^t=&~(\tau_t g^{t-1}+\bfA\tilde{\Bx}^t-\bfb )/(1+\tau_t), \label{c:algo2-0}\\
y^t =&~\textstyle\argmin\limits_{y \in \mathbb{R}^n}~ h(y)+\la -\bfA\tilde{\Bx}^t+\bfb ,y\ra + \tau_t V_{h}(y,y^{t-1})=-\nabla \psi_m(-g^t),\label{c:algo2}\\
x_i^t = &~
\left\{
\begin{array}{ll}
\argmin_{x_i\in X_i}~  \psi_i(x_i) + \la \bfA_{i}^{\top} y^t,x_i\ra + \tfrac{\eta_t}{2}\|x_i-x_i^{t-1}\|^2, & i = i_t,\\\label{c:algo3}
x_i^{t-1}, & i \neq i_t.
\end{array}
\right.
\end{align}
\ENDFOR
\STATE{Compute $x_m^s=\argmin_{x_m\in{\R^{n}}} \{\psi_m(x_m)+\la x_m,y^s \ra\}$.}
\RETURN $(\Bx^s, x_m^s)$.
\end{algorithmic}
\end{algorithm}

We first define an approximate stationary point for problem \eqref{c:problem_non} before establishing the convergence of RapDual.
\begin{definition}\label{def 2}
A point $(\eBx,x_m) \in X\times\R^{n}$ is called an $(\e,\d,\s)$-solution of \eqref{c:problem_non} if there exists some $\hat \eBx\in X$, and $\lbd\in \R^n$ such that
\begin{eqnarray*}
\left[d(\nabla f(\hat \eBx)+ \bfA^{\top}\lbd, -N_{X} (\hat \eBx))\right]^2\leq \e, &\|\nabla f_m(x_m)+\lbd\|^2\leq \e , \\
\|\eBx-\hat \eBx\|^2\leq \d, &\|\bfA\eBx+x_m-\bfb \|^2\leq \s.
\end{eqnarray*}
A stochastic counterpart is one that satisfies 
\begin{eqnarray*}
\Eb\left[d(\nabla f(\hat \eBx)+ \bfA^{\top}\lbd, -N_{X} (\hat \eBx))\right]^2\leq \e, &\Eb\|\nabla f_m(x_m)+ \lbd\|^2\leq \e , \\
\Eb\|\eBx-\hat \eBx\|^2\leq \d, &\Eb \|\bfA\eBx+x_m-\bfb \|^2\leq \s.
\end{eqnarray*}
\end{definition}
Consider the unconstrained problem with $X=\R^{\sum_{i=1}^{m-1}d_i}$. If $(\Bx,x_m) \in X\times\R^{n}$ is an $(\e,\d,\s)$-solution with $\delta = \e/L^2$, then  exists some $\hat\Bx\in X$ such that $\|\nabla f(\hat \Bx)\|^2\leq \e$ and $\|\Bx-\hat \Bx\|^2\leq \d$. By similar argument in \eqref{quality}, we obtain $\|\nabla f(\Bx)\|^2\leq 4\e$. Besides, the definition of a $(\e,\d,\s)$-solution guarantees  $\|\nabla f_m(x_m)+\lbd\|^2\leq \e$ and $\|\bfA\Bx+x_m-\bfb \|^2\leq \s$, which altogether justify that $(\Bx,x_m)$ is a reasonably good solution.
\begin{theorem} \label{Thm_cmain}
Let the iterates $(\eBx^{\l},x_m^{\l})$ for $\l = 1, \ldots, k$ be generated by Algorithm \ref{calg_non} and $\hat \l$ be randomly selected from $[k]$. Suppose in Algorithm \ref{calg:rand}, number of iterations $s=\left\lceil-\log \wh\M/\log \a\right\rceil$ with
\beq\label{cparameters1}
\wh \M=(2+\tfrac{L}{\mu})\cdot\max\left\{2,\tfrac{L^2}{\mu^2}\right\}, \quad \a=1-\tfrac{2}{(m-1)(\sqrt{1+8c}+1)},\quad  c = \tfrac{\bar A^2}{\mu\bar{\mu}}=\tfrac{(2\mu+L)\bar A^2}{\mu}, \quad \bar{A} = \max_{i\in[m-1]} \|\bfA_{i}\|, 
\eeq
and other parameters are set to   
\beq\label{cparameters2}
\tsy{\a_t = (m-1)\a,\quad \g_t = \a^{-t},  
\quad \tau_t =  \tfrac{\a}{1-\a},\quad\eta_t = \tfrac{\left(\a-\tfrac{m-2}{m-1}\right)\mu}{1-\a},\quad \forall t = 1, \ldots, s .}
\eeq
Then there exists some $\lbd^*\in \R^n$ such that 
\begin{align*}
\Eb\left[d(\nabla f(\eBx_*^{\hat\l})+\bfA^{\top}\lbd^*,-N_{X}(\eBx_*^{\hat\l}))\right]^2&~\leq \tfrac{8\mu}{k}\left\{f(\bar\eBx^{0})+f_m(\bx_{m}^{0})-[f(\eBx^{*})+f_m(x_m^{*})]\right\},\\
\Eb\|\nabla f_m(x_m^{\hat\l})+\lbd^*\|^2&~\leq \tfrac{34\mu}{k}\left\{f(\bar\eBx^{0})+f_m(\bx_{m}^{0})-[f(\eBx^{*})+f_m(x_m^{*})]\right\},\\
\Eb\|\eBx^{\hat\l}-\eBx_*^{\hat\l}\|^2	&~\leq \tfrac{2\mu}{kL^2}\left\{f(\bar\eBx^{0})+f_m(\bx_{m}^{0})-[f(\eBx^{*})+f_m(x_m^{*})]\right\},\\
\Eb\|\bfA\eBx^{\hat\l}+x_m^{\hat\l}-\bfb \|^2&~ \leq \tfrac{2(\|\bfA\|^2+1)\mu}{kL^2} \left\{f(\bar\eBx^{0})+f_m(\bx_{m}^{0})-[f(\eBx^{*})+f_m(x_m^{*})]\right\},\end{align*}
where $(\eBx^*,x_m^*)$ and  $(\eBx^{\l}_*,x_{m^*}^{\l})$ denote the  optimal solutions to \eqref{c:problem_non} and the $\l$-th subproblem \eqref{csubprob},respectively.
\end{theorem}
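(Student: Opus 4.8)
The plan is to follow the same three-stage template used to prove Theorem~\ref{Thm_main}. First I would establish a linear convergence result for the inner solver RaDual (Algorithm~\ref{calg:rand}) applied to the strongly-convex--strongly-concave subproblem~\eqref{csubprob}, in direct analogy with Lemma~\ref{lemma_dist} and Theorem~\ref{Thm_convex}. Second I would prove an outer descent lemma, analogous to Lemma~\ref{lemma_final}, showing that a \emph{constant} number $s$ of inner iterations suffices for the proximal-point outer loop to drive the relevant residuals down at rate $\O(1/k)$. Third I would convert the optimality conditions of the $\hat\l$-th subproblem into the four stationarity/feasibility estimates claimed, taking $\lbd^*$ to be the optimal Lagrange multiplier of that subproblem (which exists since the coupling constraint is linear and $X_m=\R^n$).

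For Stage~1, the key observation (recorded around~\eqref{eqn:saddle}) is that the saddle function $\la \bfA\Bx - \bfb, y\ra + \psi(\Bx) - h(y)$ is $\mu$-strongly convex in $\Bx$ from the added proximal term and, since $\psi_m$ has $\hat L$-Lipschitz gradient, $h(y)=\psi_m^*(-y)$ is $1/\hat L$-strongly convex in $y$. I would set up a one-step energy inequality combining the primal distance $\|\Bx^t-\Bx^*\|^2$, the dual prox-term $V_h(y^*,y^t)$, and the block residuals $\|x_i^t - x_i^*\|^2$, take conditional expectation over the random block $i_t$ (uniform on $[m-1]$), then sum against the weights $\g_t=\a^{-t}$ using the parameter relations implied by~\eqref{cparameters1}--\eqref{cparameters2} (the analogues of~\eqref{eqn:ss1}--\eqref{eqn:ss6}). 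This should telescope into a bound $\Eb_s[\|\Bx^*-\Bx^s\|^2] \le \a^s C\,\Eb_s[\text{initial residuals}]$, together with a matching bound for the auxiliary block sequences and, via $y^s=-\n\psi_m(-g^s)$ and strong convexity of $\psi_m$, for $\|x_m^* - x_m^s\|^2$. This is the technical heart and \emph{the main obstacle}: unlike RaGrad, there is a genuine dual variable maintained only implicitly through $g^t$, so I must verify $y^t=-\n\psi_m(-g^t)$ holds throughout and carry the dual prox-term through the telescoping while the randomized single-block update touches only the primal side.

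For Stage~2, I would use optimality of $(\eBx^\l_*,x_{m,*}^\l)$ for the $\l$-th subproblem, tested against the previous solution $(\eBx^{\l-1}_*,x_{m,*}^{\l-1})$ (feasible since the coupling constraint is the same across subproblems), to obtain $\psi^\l(\eBx^\l_*)+\psi_m^\l(x_{m,*}^\l)\le \psi^\l(\eBx^{\l-1}_*)+\psi_m^\l(x_{m,*}^{\l-1})$. Expanding the proximal terms (coefficient $\mu$ here, not $\tfrac{3\mu}{2}$) telescopes $f+f_m$ and produces $\mu\sum_\l \Eb\,(\|\eBx^\l_*-\bBx^{\l-1}\|^2 + \|x_{m,*}^\l - \bx_m^{\l-1}\|^2)$ on the left. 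The Stage~1 bounds control the inexactness $\|\bBx^\l-\eBx^\l_*\|^2$ and $\|\bx_m^\l - x_{m,*}^\l\|^2$ from stopping RaDual after $s$ steps; inducting on the block-residual recursion exactly as in the passage leading to~\eqref{iteration} absorbs the accumulated initialization error, and the stated choice of $s$ makes the contraction factor small enough for the geometric series to close, giving $\Eb\|\eBx^{\hat\l}_*-\bBx^{\hat\l-1}\|^2$ and $\Eb\|x_{m,*}^{\hat\l}-\bx_m^{\hat\l-1}\|^2$ of order $\tfrac{\mu}{k}\mathcal{D}^0$.

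For Stage~3, the optimality conditions of~\eqref{csubprob} give $\n f(\eBx^{\hat\l}_*)+2\mu(\eBx^{\hat\l}_*-\bBx^{\hat\l-1})+\bfA^\top\lbd^* \in -N_X(\eBx^{\hat\l}_*)$ and $\n f_m(x_{m,*}^{\hat\l})+2\mu(x_{m,*}^{\hat\l}-\bx_m^{\hat\l-1})+\lbd^*=0$, so the two stationarity residuals are bounded by $4\mu^2$ times the Stage~2 distances. The primal inexactness $\Eb\|\eBx^{\hat\l}-\eBx^{\hat\l}_*\|^2$ comes directly from Stage~1, and the constraint violation follows by writing $\bfA\eBx^{\hat\l}+x_m^{\hat\l}-\bfb = \bfA(\eBx^{\hat\l}-\eBx^{\hat\l}_*)+(x_m^{\hat\l}-x_{m,*}^{\hat\l})$ (the exact subproblem solution is feasible) and applying Cauchy--Schwarz, which explains the factor $(\|\bfA\|^2+1)$. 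Substituting the parameter values from~\eqref{cparameters1}--\eqref{cparameters2} and $\hat L=L+2\mu$ into all constants then yields the explicit coefficients stated in the theorem.
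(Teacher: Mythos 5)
Your three-stage plan is the same route the paper takes: your Stage~1 is Lemma~\ref{lemma 5} together with Theorem~\ref{theo3} (the saddle-point energy inequality telescoped with weights $\g_t=\a^{-t}$, then converted into primal distances via the strong convexity and gradient-Lipschitz continuity of $h$), your Stage~2 is Lemma~\ref{clemma_final}, and your Stage~3 matches the paper's final argument, including the decomposition $\bfA\eBx^{\hat\l}+x_m^{\hat\l}-\bfb=\bfA(\eBx^{\hat\l}-\eBx_*^{\hat\l})+(x_m^{\hat\l}-x_{m^*}^{\hat\l})$ that produces the factor $\|\bfA\|^2+1$. There is, however, one concrete gap in Stage~3. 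The theorem's second estimate bounds $\Eb\|\nabla f_m(x_m^{\hat\l})+\lbd^*\|^2$ at the \emph{computed} iterate $x_m^{\hat\l}=\bx_m^{\hat\l}$, not at the exact subproblem solution $x_{m^*}^{\hat\l}$; only the first estimate (the normal-cone distance) is stated at the exact solution $\eBx_*^{\hat\l}$. So your claim that ``the two stationarity residuals are bounded by $4\mu^2$ times the Stage~2 distances'' is valid only for the first one. For the second you must decompose $\nabla f_m(x_m^{\hat\l})+\lbd^* = [\nabla f_m(x_m^{\hat\l})-\nabla f_m(x_{m^*}^{\hat\l})]+[\nabla f_m(x_{m^*}^{\hat\l})+\lbd^*]$, bound the first bracket by $L\|x_m^{\hat\l}-x_{m^*}^{\hat\l}\|$ via Lipschitz continuity, and control it with the \emph{inexactness} bound of Lemma~\ref{clemma_final}; the choice of $s$, through $\wh\M \ge (2+\tfrac{L}{\mu})\tfrac{L^2}{\mu^2}$, guarantees $L^2\M\a^s = \O(\mu^2)$, so this extra term is again of order $\tfrac{\mu}{k}\mathcal{D}^0$. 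This transfer step is not cosmetic: it is exactly where the constant $34$ (rather than $8$) in the theorem comes from, and without it you prove a statement about $x_{m^*}^{\hat\l}$ rather than the one claimed.

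A second, minor point: in Stage~2 you plan to ``induct on the block-residual recursion exactly as in the passage leading to~\eqref{iteration}.'' That induction is needed in RapGrad because the inner solver's contraction there is measured partly in terms of the auxiliary sequences $\bar\ux^{\l-1}_i$, whose deviation accumulates across outer iterations and must be unrolled. RaDual carries no such auxiliary sequences between subproblems: Theorem~\ref{theo3} contracts directly from $(\bBx^{\l-1},\bx_m^{\l-1})$ to the subproblem optimum, so the contraction \eqref{c_non1} and the telescoped optimality inequality \eqref{c_non2} combine in a single step with no induction. Carrying out your induction would not be wrong, merely vacuous; recognizing this structural difference is precisely why the RapDual outer-loop analysis is shorter than the RapGrad one.
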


Theorem \ref{Thm_cmain} ensures that our output solution $(\Bx^{\hat \l},x_m^{\hat \l})$ is close enough to 
an unknown approximate stationary point $(\Bx_*^{\hat \l},x_{m^*}^{\hat \l})$. According to Theorem~\ref{Thm_cmain},
we can bound the complexity of RapDual to compute a 
stochastic $(\e,\d,\s)$-solution of \eqref{c:problem_non}  in terms of block updates in \eqref{c:algo3}. 
Note that for  each subproblem \eqref{csubprob}, we only need to update $s$ primal blocks with
\[\tsy{s= \left\lceil-\tfrac{\log \wh\M}{\log \a}\right\rceil \sim\O\left( m\bar A\sqrt{\tfrac{L}{\mu}}\log\left(\tfrac{L}{\mu}\right)\right)}.\]
Let $\mathcal{D}^0:=f(\bBx^{0})+f_m(\bx_m^0)-[f(\Bx^{*})+f_m(x_m^{*})]$. It can be seen that 
the total number of primal block updates required to obtain a stochastic 
$(\e,\d,\s)$-solution can be bounded by 
\begin{equation} \label{comp_RapDual}
N(\e,\d,\s):=\tsy{\O\left( m\bar A\sqrt{L\mu}\log\left(\tfrac{L}{\mu}\right)\cdot \max\left\{\tfrac{1}{\e}, ~\tfrac{1}{\d L^2},~\tfrac{\|\bfA\|^2}{\s L^2}\right\}\mathcal{D}^0\right)}.
\end{equation}
As a comparison, the batch version of this algorithm would update all the $x_i^t$ for $i=1,\ldots,m$, in \eqref{c:algo3}, 
and thus would require
\[\hat N(\e,\d,\s):=\tsy{\O\left( m\|\bfA\|\sqrt{L\mu}\log\left(\tfrac{L}{\mu}\right)\cdot \max\left\{\tfrac{1}{\e}, ~\tfrac{1}{\d L^2},~\tfrac{\|\bfA\|^2}{\s L^2}\right\}\mathcal{D}^0\right)}.\]
primal block updates to obtain an $(\e,\d,\s)$-solution of \eqref{c:problem_non}.
Therefore, the benefit of  randomization comes from the difference between  $\|\bfA\|$ and $\bar A$. 
Obviously we always have $\|\bfA\|> \bar A$,  and the relative gap between $\|\bfA\|$ and $\bar A$ can be large when all the matrix blocks have close norms. 
 In the case when all the blocks are identical, i.e. $\bfA_1 = \bfA_2=\ldots=\bfA_{m-1}$, we immediately have $\|\bfA\| = \sqrt{m-1}\bar{A}$, which means 
 that RapDual can 
 potentially save the number of primal block updates by a factor of ${\cal O}(\sqrt{m})$ than its batch counterpart.
 
 It is also interesting to compare RapDual with the nonconvex randomized block coordinate descent method in \cite{DangLan2015}.
 To compare these methods, let us assume that $f_i$ is smooth with $\bar L$-Lipschitz continuous gradient 
 for some $\bar L \ge \mu$ for any $i = 1, \ldots, m$.
 Also let us assume that $\s > \|\bfA\|^2 \e / L^2$, $\d = \e / L^2$, and $X=\R^{\sum_{i=1}^{m-1}d_i}$.  Then,
  after disregarding some constant factors, the bound in \eqref{comp_RapDual} reduces
 to ${\cal O}(m \bar A \sqrt{L \mu} \mathcal{D}^0 / \epsilon)$, which is always smaller than
 the ${\cal O}(m (\bar L + L \bar A^2) \mathcal{D}^0 / \epsilon)$ complexity bound 
 implied by Corollary 4.4 of \cite{DangLan2015}.


\begin{remark} \label{remarkinverse}
In this paper, we assume that $A_m^{-1}$ is easily computable. One natural question is whether we can avoid the computation of $A_m^{-1}$ 
by directly solving \eqref{c:problem_non} instead of its reformulation \eqref{c:problem_convex}.
To do so, we can iteratively solve the following saddle-point subproblems in place of the ones in \eqref{eqn:saddle}:
\begin{align}\label{eqn:saddle_original}
&\min_{\Bx\in X,x_m\in \R^{n}}   \{ \psi(\Bx)+\psi_m(x_{m}) + \max_{y\in \mathbb{R}^{n}} \lla \tsum_{i=1}^{m} A_ix_i - b , y\rra \}
\nn \\
=&\min_{\Bx\in X}  \{\psi(\Bx) + \max_{y\in \mathbb{R}^{n}} [\lla A\Bx- b , y\rra+ \min_{x_m\in\R^{n}}\left\{\psi_m(x_m)+\la A_m x_m,y\ra\right\}]\}\nn\\
=&\min_{\Bx\in X}  \{ \psi(\Bx) + \max_{y\in \mathbb{R}^{n}} [\lla A\Bx-b , y\rra-\tilde{h}(y) ]\},
\end{align}
where $A:=[A_1, \ldots, A_{m-1}]$ and $\tilde{h}(y) := \max_{x_m\in\R^{n}}\left\{-\psi_m(x_m)-\la A_m x_m,y\ra\right\}$.
Instead of keeping $\tilde{h}(y)$ in the projection subproblem~\eqref{c:algo2}
as we did for $h(y)$, we need to linearize it at each iteration by computing its gradients
$\nabla \tilde h(y^{t-1}) = - A_m^T \bar x_m(y^{t-1})$, where $\bar x_m(y^{t-1}) = \argmax_{x_m\in\R^{n}}\left\{-\psi_m(x_m)-\la A_m x_m,y^{t-1}\ra\right\}$.
Note that the latter optimization problem can be solved by using an efficient first-order method due to the smoothness and strong concavity of its objective function. 
As a result, we will be able to obtain a similar rate of convergence to RapDual without computing $A_m^{-1}$.
However, the statement and analysis
of the algorithm will be much more complicated than RapDual in its current form.
\end{remark}

\subsection{Convergence analysis for RapDual}\label{sec_cSGD_analysis}
In this section, we first show the convergence of Algorithm~\ref{calg:rand} for solving the convex multi-block subproblem \eqref{convex} with
\begin{enumerate}[label=(\Alph*)]
\item $\psi_i(x)-\psi_i(y)-\la\nabla \psi_i(y),x-y\ra \geq \tfrac{\mu}{2}\|x-y\|^2, \ \ \forall x, y\in X_i, \quad i=1,\ldots,m-1,$
\item $\tfrac{\mu}{2}\|x-y\|^2\leq \psi_m(x)-\psi_m(y)-\la\nabla \psi_m(y),x-y\ra \leq \tfrac{\hat L}{2}\|x-y\|^2, \ \ \forall x, y\in \R^n.$
\end{enumerate}

Some simple relations about the iterations generated by the Algorithm \ref{calg:rand} are characterized in the following lemma, and the proof follows directly
from the definition of $\hat \Bx$ in \eqref{c:xfull}, thus has been omitted.  
\begin{lemma}\label{clemma:full}
Let $\hat{\eBx}^0 = \eBx^0$ and $\hat{\eBx}^t$ for $t = 1,\ldots, s$ be defined as follows:
\begin{align}\label{c:xfull}
\hat{\eBx}^t = \argmin_{\eBx\in X}~  \psi(\eBx) + \la \bfA^{\top} y^t,\eBx\ra + \tfrac{\eta_t}{2}\|\eBx-\eBx^{t-1}\|^2,
\end{align}
where $\eBx^t$  and $y^t$ are obtained from \eqref{c:algo2}-\eqref{c:algo3}, then
we have
\begin{align}
\Eb_{i_t}\left\{\|\eBx-\hat{\eBx}^t\|^2\right\} &= \Eb_{i_t}\left\{(m-1)\|\eBx-\eBx^t\|^2-(m-2)\|\eBx-\eBx^{t-1}\|^2\right\},\label{c:full2}\\
\Eb_{i_t}\left\{\|\hat{\eBx}^t-\eBx^{t-1}\|^2\right\} &= \Eb_{i_t}\left\{(m-1)\|x_{i_t}^t-x_{i_t}^{t-1}\|^2\right\}\label{c:full3}.
\end{align}
\end{lemma}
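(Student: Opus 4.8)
The plan is to exploit the block-separable structure of the full proximal update $\hat{\eBx}^t$ defined in \eqref{c:xfull} and relate it to the single-block update actually carried out in \eqref{c:algo3}. First I would note that, since $X = X_1\times\cdots\times X_{m-1}$ and $\psi(\eBx) = \sum_{i=1}^{m-1}\psi_i(x_i)$, all three terms in \eqref{c:xfull} separate across blocks: $\la \bfA^{\top} y^t,\eBx\ra = \sum_{i=1}^{m-1}\la \bfA_i^{\top} y^t, x_i\ra$ and $\tfrac{\eta_t}{2}\|\eBx-\eBx^{t-1}\|^2 = \sum_{i=1}^{m-1}\tfrac{\eta_t}{2}\|x_i - x_i^{t-1}\|^2$. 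Consequently the minimization in \eqref{c:xfull} decouples, and its $i$-th component $\hat x_i^t$ solves precisely the same subproblem as the one defining $x_i^t$ in \eqref{c:algo3}. Comparing the two, I obtain the key identity that $x_i^t = \hat x_i^t$ whenever $i = i_t$, while $x_i^t = x_i^{t-1}$ for all $i\neq i_t$; in other words, $\hat{\eBx}^t$ is exactly the iterate one would obtain by updating \emph{every} block simultaneously.

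The second preliminary observation, which I would record explicitly to avoid confusion, is that $y^t$ (hence $\hat{\eBx}^t$) and $\eBx^{t-1}$ are determined by the history through iteration $t-1$: inspecting \eqref{c:algo1}--\eqref{c:algo2}, the quantities $\tilde{\eBx}^t$, $g^t$, and $y^t$ do not depend on the random index $i_t$. Therefore both $\hat{\eBx}^t$ and $\eBx^{t-1}$ act as constants under the conditional expectation $\Eb_{i_t}$, and in particular $\Eb_{i_t}\|\eBx-\hat{\eBx}^t\|^2 = \|\eBx-\hat{\eBx}^t\|^2$. This is the only point that requires any care.

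For \eqref{c:full2} I would then compute $\Eb_{i_t}\|\eBx-\eBx^t\|^2$ directly. Conditioning on $i_t = j$ and using the block identity, $\|\eBx-\eBx^t\|^2 = \|x_j - \hat x_j^t\|^2 + \sum_{i\neq j}\|x_i - x_i^{t-1}\|^2$; averaging over the uniform choice $j\in[m-1]$ and noting that each block index appears in the inner sum $\sum_{i\neq j}$ for exactly $m-2$ of the $m-1$ values of $j$ gives $\Eb_{i_t}\|\eBx-\eBx^t\|^2 = \tfrac{1}{m-1}\|\eBx-\hat{\eBx}^t\|^2 + \tfrac{m-2}{m-1}\|\eBx-\eBx^{t-1}\|^2$. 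Rearranging and invoking the previous observation yields \eqref{c:full2}. For \eqref{c:full3} the computation is even shorter: since $x_{i_t}^t = \hat x_{i_t}^t$, averaging gives $\Eb_{i_t}(m-1)\|x_{i_t}^t - x_{i_t}^{t-1}\|^2 = \sum_{j=1}^{m-1}\|\hat x_j^t - x_j^{t-1}\|^2 = \|\hat{\eBx}^t - \eBx^{t-1}\|^2$, which is \eqref{c:full3}. Since everything reduces to separability together with an elementary counting argument, there is no real obstacle here; the only thing to watch is distinguishing the quantities that are fixed given the history ($y^t$, $\hat{\eBx}^t$, $\eBx^{t-1}$) from those that are genuinely random in $i_t$ ($\eBx^t$ and $x_{i_t}^t$).
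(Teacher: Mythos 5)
Your proof is correct and is essentially the paper's own argument: the paper omits the proof entirely, stating only that it ``follows directly from the definition of $\hat{\eBx}^t$ in \eqref{c:xfull},'' and what you have written is precisely that direct verification---block separability of \eqref{c:xfull} over $X=X_1\times\cdots\times X_{m-1}$ identifying $\hat x_{i_t}^t$ with $x_{i_t}^t$, the observation that $y^t$ (hence $\hat{\eBx}^t$) and $\eBx^{t-1}$ are independent of $i_t$, and the uniform-averaging count over $j\in[m-1]$. No gap; your care in flagging which quantities are deterministic under $\Eb_{i_t}$ is exactly the one point worth making explicit.
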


The following lemma \ref{lemma 5}  builds some  connections between the input and output of  Algorithm \ref{calg:rand} in terms of both primal and dual variables, 
and the proof can be found in Appendix \ref{pf_lemma5}.
\begin{lemma}\label{lemma 5}
Let the iterates $\eBx^t$ and $y^t$ for $t = 1, \ldots, s$ be generated by Algorithm \ref{calg:rand}
and $(\eBx^*, y^*)$ be a saddle point of \eqref{eqn:saddle}.
Assume that the parameters in Algorithm \ref{calg:rand} satisfy for all $t = 1, \ldots, s-1$
\begin{align}
\a_{t+1} = &~ (m-1) \tilde{\a}_{t+1},\label{c:ss1}\\
\g_t = &~\g_{t+1}\tilde{\a}_{t+1},\label{c:ss2}\\
\g_{t+1}\left((m-1)\eta_{t+1} + (m-2)\mu\right) \le &~(m-1)\g_t(\eta_t + \mu),\label{c:ss3}\\
\g_{t+1}\tau_{t+1} \le &~ \g_t(\tau_t+1),\label{c:ss4}\\
2(m-1)\tilde{\a}_{t+1} \bar{A}^2 \le &~ \bar\mu\eta_t\tau_{t+1},\label{c:ss5}
\end{align}
where $\bar{A} = \max_{i\in [m-1]} \|\bfA_{i}\|$.
Then we have
\begin{align}\label{eq: gap-det}
\Eb_s&\tsy{\left\{\tfrac{\g_1((m-1)\eta_1+(m-2)\mu)}{2}\|\eBx^*-\eBx^0\|^2 - \tfrac{(m-1)\g_s(\eta_s+\mu)}{2}\|\eBx^*-\eBx^s\|^2\right\}}\nn\\
&~+ \Eb_s\tsy{\left\{\g_1\tau_1V_{  h}(y^*,y^0) - \tfrac{\g_s(\tau_s+1)\bar\mu}{2}V_{  h}(y^*,y^s)\right\}\geq 0}.
\end{align}
\end{lemma}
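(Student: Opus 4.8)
The plan is to run the standard accelerated primal--dual analysis for the saddle-point reformulation in \eqref{eqn:saddle}, but carried out in conditional expectation so as to accommodate the single-block randomized update in \eqref{c:algo3}. First I would record the two prox-inequalities produced by the subproblem updates. For the dual step \eqref{c:algo2}, since $h$ is $\bar\mu$-strongly convex and the update is a Bregman prox-step with distance generating function $h$ itself, the three-point identity for $V_h$ yields, for every $y$,
\[
h(y^t)-h(y)+\la-\bfA\tilde{\eBx}^t+\bfb,\,y^t-y\ra+(\tau_t+1)V_h(y,y^t)+\tau_t V_h(y^t,y^{t-1})=\tau_t V_h(y,y^{t-1}).
\]
For the primal step I would work with the \emph{full} update $\hat{\eBx}^t$ of \eqref{c:xfull}: because $\psi$ is $\mu$-strongly convex, the Euclidean prox-inequality gives, for every $\eBx\in X$,
\[
\psi(\hat{\eBx}^t)+\la\bfA^\top y^t,\hat{\eBx}^t\ra+\tfrac{\eta_t+\mu}{2}\|\eBx-\hat{\eBx}^t\|^2+\tfrac{\eta_t}{2}\|\hat{\eBx}^t-\eBx^{t-1}\|^2\le \psi(\eBx)+\la\bfA^\top y^t,\eBx\ra+\tfrac{\eta_t}{2}\|\eBx-\eBx^{t-1}\|^2.
\]

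Next I would take the conditional expectation $\Eb_{i_t}$ and invoke Lemma~\ref{clemma:full} to translate the deterministic full-update quantities into the actually committed iterate $\eBx^t$: relation \eqref{c:full2} converts $\|\eBx-\hat{\eBx}^t\|^2$ into $(m-1)\|\eBx-\eBx^t\|^2-(m-2)\|\eBx-\eBx^{t-1}\|^2$, while \eqref{c:full3} rewrites $\|\hat{\eBx}^t-\eBx^{t-1}\|^2$ as $(m-1)\|x_{i_t}^t-x_{i_t}^{t-1}\|^2$; the separability of $\psi$ across blocks likewise lets me replace $\psi(\hat{\eBx}^t)$ by $(m-1)\psi(\eBx^t)-(m-2)\psi(\eBx^{t-1})$ in expectation. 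This is where the factor $(m-1)$ pervading conditions \eqref{c:ss1}--\eqref{c:ss5} and the coefficients of \eqref{eq: gap-det} is generated.

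I would then multiply the two resulting inequalities by $\g_t$, evaluate them at the saddle point $(\eBx^*,y^*)$, add them, and sum over $t=1,\dots,s$. The squared-distance terms $\|\eBx^*-\eBx^{t-1}\|^2,\|\eBx^*-\eBx^t\|^2$ telescope by \eqref{c:ss3} and the Bregman terms $V_h(y^*,y^{t-1}),V_h(y^*,y^t)$ telescope by \eqref{c:ss4}, leaving exactly the four boundary terms displayed in \eqref{eq: gap-det}. The genuine coupling between the two inequalities is $\la\bfA^\top y^t,\eBx^t-\eBx^*\ra-\la\bfA\tilde{\eBx}^t-\bfb,\,y^t-y^*\ra$; substituting $\tilde{\eBx}^t=\eBx^{t-1}+\a_t(\eBx^{t-1}-\eBx^{t-2})$ and using \eqref{c:ss1}--\eqref{c:ss2} (so that the weights obey $\g_{t+1}\a_{t+1}=(m-1)\g_t$) makes the aligned part $\la\bfA(\eBx^{t-1}-\eBx^{t-2}),\,y^{t-1}-y^*\ra$ telescope, while the rest reassembles into the Lagrangian gap $\g_s\big[\ell(\eBx^s,y^*)-\ell(\eBx^*,y^s)\big]\ge 0$, which is nonnegative by the saddle-point property and can simply be dropped.

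The main obstacle, and the step I would carry out most carefully, is controlling the residual misaligned cross term $\la\bfA(\eBx^{t-1}-\eBx^{t-2}),\,y^t-y^{t-1}\ra$ left over by the extrapolation. I would split it by Young's inequality into a primal piece, absorbed by the step-size budget $\tfrac{\eta_t}{2}\|x_{i_t}^{t-1}-x_{i_t}^{t-2}\|^2$ that survives the primal telescoping, and a dual piece, absorbed by $\tfrac{\bar\mu}{2}\|y^t-y^{t-1}\|^2\le V_h(y^t,y^{t-1})$. Balancing these two budgets against the coupling magnitude $\bar A=\max_{i}\|\bfA_i\|$ and the randomization factor $(m-1)$ is precisely what forces \eqref{c:ss5}, namely $2(m-1)\tilde{\a}_{t+1}\bar A^2\le\bar\mu\,\eta_t\tau_{t+1}$; checking that this inequality renders every surviving error term nonpositive, so that only the four boundary terms of \eqref{eq: gap-det} remain, is the crux. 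The detailed bookkeeping of the conditional expectations chained across successive iterations I would defer to the appendix.
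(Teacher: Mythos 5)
Your proposal is correct and follows essentially the same route as the paper's proof in Appendix B: prox-inequalities for the full primal update $\hat{\eBx}^t$ of \eqref{c:xfull} and the dual step \eqref{c:algo2}, conversion to the committed iterates via Lemma \ref{clemma:full} under conditional expectation, telescoping through \eqref{c:ss3}--\eqref{c:ss4}, and absorption of the misaligned extrapolation cross term $\la \bfA(\eBx^{t-1}-\eBx^{t-2}),\, y^t-y^{t-1}\ra$ by Young's inequality under \eqref{c:ss5}. The only cosmetic difference is that the paper invokes the saddle-point gap inequality at the outset so that the $\psi(\hat{\eBx}^t)$ terms cancel directly against the prox-inequality (rather than converting function values by separability and dropping the nonnegative gap terms at the end, as you do), and it handles the $t=s$ boundary coupling by sacrificing half of the $V_h(y^*,y^s)$ budget, which is where the halved coefficient in \eqref{eq: gap-det} originates.
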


Now we present the main convergence result of Algorithm \ref{calg:rand} in Theorem \ref{theo3}, which eliminates the dependence on dual variables 
and relates directly the successive searching points of RapDual.
\begin{theorem}\label{theo3}
Let $(x^*, y^*)$ be a saddle point of \eqref{eqn:saddle}, and suppose that the parameters $\{\a_t\}$, $\{\tau_t\}$, $\{\eta_t\}$ and $\{\g_t\}$ are set as in \eqref{cparameters1} and \eqref{cparameters2}, and $\tilde\a_t = \a$.
Then, for any $s \ge 1$, we have
\begin{align*}
  \Eb_s\left\{\|\eBx^s-\eBx^*\|^2+\|x_m^s-x_m^*\|^2\right\}\leq 
\a^s\M(\|\eBx^0-\eBx^*\|^2 +\|x_m^0-x_m^*\|^2),
\end{align*}
where $x_m^*=\argmin_{x_m\in{\R^{n}}} \{\psi_m(x_m)+\la x_m,y^* \ra\}$ and $\M=2\hat L/\mu$.
\end{theorem}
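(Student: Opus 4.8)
The plan is to deduce Theorem~\ref{theo3} from the abstract descent inequality \eqref{eq: gap-det} established in Lemma~\ref{lemma 5}, by first verifying that the specific parameter choices in \eqref{cparameters1}--\eqref{cparameters2} (together with $\tilde\a_t=\a$) satisfy the structural conditions \eqref{c:ss1}--\eqref{c:ss5}, and then converting the weighted telescoping estimate in $\|\eBx^*-\eBx^s\|^2$ and $V_h(y^*,y^s)$ into the claimed contraction in the primal variables $\eBx^s$ and $x_m^s$. First I would check the five parameter conditions: \eqref{c:ss1} and \eqref{c:ss2} should follow immediately from $\a_t=(m-1)\a$, $\g_t=\a^{-t}$, and $\tilde\a_t=\a$ (so $\g_{t+1}\tilde\a_{t+1}=\a^{-(t+1)}\cdot\a=\a^{-t}=\g_t$); the monotonicity conditions \eqref{c:ss3} and \eqref{c:ss4} reduce to checking that $\a(\eta+\mu)$ and $\a(\tau+1)$-type quantities dominate in the right direction under the constant-in-$t$ choices; and the crucial coupling condition \eqref{c:ss5}, namely $2(m-1)\a\bar A^2\le\bar\mu\,\eta\,\tau$, is where the precise definition of $\a$ in terms of $c=\bar A^2/(\mu\bar\mu)$ and the choices $\tau=\a/(1-\a)$, $\eta=(\a-\tfrac{m-2}{m-1})\mu/(1-\a)$ are engineered to make the inequality tight. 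Since all parameters are constant in $t$, the superscripts $s$ and $1$ on $\eta,\tau$ in \eqref{eq: gap-det} refer to the same numbers, which simplifies the bookkeeping considerably.

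Next I would unwind \eqref{eq: gap-det}. With constant parameters and $\g_t=\a^{-t}$, the left-hand side takes the form $\tfrac{\a^{-1}((m-1)\eta+(m-2)\mu)}{2}\|\eBx^*-\eBx^0\|^2-\tfrac{(m-1)\a^{-s}(\eta+\mu)}{2}\|\eBx^*-\eBx^s\|^2+\a^{-1}\tau V_h(y^*,y^0)-\tfrac{\a^{-s}(\tau+1)\bar\mu}{2}V_h(y^*,y^s)\ge0$. Rearranging and multiplying through by $\a^{s}$ yields an inequality in which $\|\eBx^*-\eBx^s\|^2$ and $V_h(y^*,y^s)$ on the left are bounded by $\a^{s-1}$ times the initial quantities; collecting the contraction factor gives $\a^{s}$ after absorbing the $\a^{-1}$ into constants. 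The remaining task is to translate the dual term $V_h(y^*,y^s)$ back into a primal statement about $x_m$. Here I would invoke the conjugacy relations developed just before Algorithm~\ref{calg:rand}: since $h(y)=\psi_m^*(-y)$ with $\psi_m$ being $\mu$-strongly convex and $\hat L$-smooth, the Bregman distance $V_h(y^*,y^s)$ controls $\|x_m^s-x_m^*\|^2$ (with $x_m^s=-\nabla h(y^s)=\nabla\psi_m^*(\cdots)$ and $x_m^*$ defined as the minimizer for $y^*$); strong convexity of $\psi_m$ gives $\bar\mu=1/\hat L$-strong convexity of $h$, so $V_h(y^*,y^s)\ge\tfrac{1}{2\hat L}\|\cdot\|^2$ in the dual, while smoothness gives the matching upper bound that converts the gradient difference into $\|x_m^s-x_m^*\|^2$.

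The main obstacle I expect is precisely this primal-dual conversion of the $V_h$ term, together with combining the two initial terms $\|\eBx^*-\eBx^0\|^2$ and $V_h(y^*,y^0)$ into the single clean bound $\a^s\M(\|\eBx^0-\eBx^*\|^2+\|x_m^0-x_m^*\|^2)$ with $\M=2\hat L/\mu$. Getting the constant $\M$ right requires tracking how the strong-convexity modulus $\mu$ (appearing in the $\eBx$ coefficients) and the dual modulus $\bar\mu=1/\hat L$ (appearing in the $y$ coefficients) interact: the lower coefficient $(m-1)(\eta+\mu)/2$ on $\|\eBx^*-\eBx^s\|^2$ must be compared against the upper coefficient $((m-1)\eta+(m-2)\mu)/2$ on $\|\eBx^*-\eBx^0\|^2$, and similarly the dual $V_h$ lower coefficient against its upper coefficient, so that both initial terms can be majorized by a common multiple. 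I would handle this by lower-bounding the output coefficients and upper-bounding the input coefficients, then using the smoothness bound $V_h(y^*,y^0)\le\tfrac{\hat L}{2}\|\cdot\|^2$ (since $\nabla\psi_m^*$ is $1/\mu$-Lipschitz, equivalently $h$ has $1/\mu$-Lipschitz gradient) to replace $V_h(y^*,y^0)$ by $\|x_m^0-x_m^*\|^2$ up to the factor that produces $\M=2\hat L/\mu$. The remaining algebra, including verifying that $\a^{s-1}$ absorbed into the coefficient ratios yields exactly $\a^s\M$, is routine bookkeeping once the coefficient comparisons above are in place.
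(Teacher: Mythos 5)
Your proposal follows the same route as the paper's own proof: verify that \eqref{cparameters1}--\eqref{cparameters2} (with $\tilde\a_t=\a$) satisfy \eqref{c:ss1}--\eqref{c:ss5}, specialize \eqref{eq: gap-det} to constant parameters and $\g_t=\a^{-t}$, rearrange and multiply by $\a^s$ to obtain the intermediate contraction \eqref{c:p1} in $\|\Bx^s-\Bx^*\|^2$ and $V_h(y^s,y^*)$, and finally translate the dual Bregman terms into primal distances through $h(y)=\psi_m^*(-y)$, whose gradient satisfies $\nabla h(y)=-\argmin_{x_m\in\R^n}\{\psi_m(x_m)+\la x_m,y\ra\}$, so that $\nabla h(y^s)-\nabla h(y^*)=-(x_m^s-x_m^*)$. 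Up to that point your plan is sound. The gap is in the translation step: you have the two conjugate-duality inequalities crossed, and with the inequalities as you state them the claimed constant $\M=2\hat L/\mu$ does not come out.

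Specifically, the output term requires $V_h(y^s,y^*)\ge\tfrac{\mu}{2}\|\nabla h(y^s)-\nabla h(y^*)\|^2=\tfrac{\mu}{2}\|x_m^s-x_m^*\|^2$, which is the standard lower bound on the Bregman divergence of a \emph{smooth} convex function ($\nabla h$ is $1/\mu$-Lipschitz because $\psi_m$ is $\mu$-strongly convex); this is exactly the paper's \eqref{d:dist-u}. You instead invoke the $1/\hat L$-strong convexity of $h$ to get $V_h(y^*,y^s)\ge\tfrac{1}{2\hat L}\|y^*-y^s\|^2$ and then pass to $\|x_m^s-x_m^*\|$ via Lipschitz continuity of $\nabla h$; that chain only yields $V_h\ge\tfrac{\mu^2}{2\hat L}\|x_m^s-x_m^*\|^2$, losing a factor of $\mu/\hat L$. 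Symmetrically, the input bound $V_h(y^0,y^*)\le\tfrac{\hat L}{2}\|\nabla h(y^0)-\nabla h(y^*)\|^2=\tfrac{\hat L}{2}\|x_m^0-x_m^*\|^2$ (the paper's \eqref{d:dist-uy}) is a consequence of the $1/\hat L$-\emph{strong convexity} of $h$, not of the $1/\mu$-Lipschitz continuity of $\nabla h$ as you assert: smoothness only gives $V_h(y^0,y^*)\le\tfrac{1}{2\mu}\|y^0-y^*\|^2$, and converting $\|y^0-y^*\|$ into $\hat L\|x_m^0-x_m^*\|$ costs another factor of $\hat L/\mu$. Carried out literally, your conversions still produce a linear contraction, but with a constant of order $\hat L^3/\mu^3$ rather than $2\hat L/\mu$, so the theorem as stated (and the downstream use of the specific value of $\M$ in Lemma \ref{clemma_final} and Theorem \ref{Thm_cmain}) is not established. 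You also reverse the structural attributions: the $\mu$-strong convexity of $\psi_m$ is what makes $h$ smooth, while the $\hat L$-smoothness of $\psi_m$ is what makes $h$ strongly convex. The repair is minimal: replace your two conversions by the Bregman--gradient inequalities in the correct directions, i.e., \eqref{d:dist-u} and \eqref{d:dist-uy}; the rest of your argument then coincides with the paper's proof.
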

\begin{proof}
It is easy to check that \eqref{cparameters1} and \eqref{cparameters2} satisfy conditions \eqref{c:ss1}, \eqref{c:ss2}, \eqref{c:ss3} \eqref{c:ss4}, and \eqref{c:ss5} when $\mu,\bar{\mu} > 0$.
Then we have 
\[
\tsy{\Eb_s\left\{\tfrac{(m-1)\g_s(\eta_s+\mu)}{2}\|\Bx^s-\Bx^*\|^2+\tfrac{\g_s(\tau_s+1)\bar\mu}{2}V_{  h}(y^s,y^*)\right\} 
  \leq 
\tfrac{\g_1((m-1)\eta_1+(m-2)\mu)}{2}\|\Bx^0-\Bx^*\|^2 +\g_1\tau_1V_{  h}(y^0,y^*)}.
\]
Therefore, by plugging in those values in  \eqref{cparameters1} and \eqref{cparameters2}, we have
\begin{equation}\label{c:p1}
\Eb_s\left[\mu\|\Bx^s-\Bx^*\|^2 +V_{  h}(y^s,y^*)\right]
\le  \a^s
\left[\mu\|\Bx^0-\Bx^*\|^2 +2V_{  h}(y^{0},y^*)
\right],
\end{equation}
Since $h(y)$ has $1/\mu$-Lipschitz continuous gradients and is  $1/L$-strongly convex, we obtain
\begin{align}
&V_{  h}(y^s,y^*)\geq \tfrac{\mu}{2}\|\nabla  h(y^s)-\nabla  h(y^*)\|^2=\tfrac{\mu}{2}\|-x_m^s+x_m^*\|^2\label{d:dist-u},\\
&V_{  h}(y^0,y^*)\leq \tfrac{\hat L}{2}\|\nabla  h(y^0)-\nabla  h(y^*)\|^2=\tfrac{\hat L}{2}\|-x_m^0+x_m^*\|^2\label{d:dist-uy}.
\end{align}
Combining \eqref{c:p1}, \eqref{d:dist-u} and \eqref{d:dist-uy}, we have
\[\Eb_s\left\{\|\Bx^s-\Bx^*\|^2+\|x_m^s-x_m^*\|^2\right\}\leq 
\a^s\M
(\|\Bx^0-\Bx^*\|^2 +\|x_m^0-x_m^*\|^2).\]
\end{proof}   

The above theorem shows that subproblem \eqref{convex} can be solved efficiently by Algorithm \ref{calg:rand} with a linear rate of convergence.
In fact, we need not solve it too accurately. With a fixed and relatively small number of iterations $s$ Algorithm \ref{calg:rand} can still converge, as shown by the following lemma.

\begin{lemma}\label{clemma_final}
Let the inner iteration number $s\geq \left\lceil-\log \M/\log \a\right\rceil$ with $\M=4+2L/\mu$ be given.
Also the iterates $(\eBx^{\l},x_m^{\l})$ for $\l = 1, \ldots, k$ be generated by Algorithm \ref{calg_non} and $\hat \l$ be randomly selected from $[k]$. Then 
\begin{align*}
\Eb\left(\|\Bx^{\l}_*-\bBx^{\l-1}\|^2+\|x_{m^*}^{\l}-\bx_m^{\l-1}\|^2\right)
& \le
\tfrac{1}{k\mu(1-M\a^s)}\left\{f(\bBx^{0})+f_m(\bx_{m}^{0})-[f(\Bx^{*})+f_m(x_m^{*})]\right\},\\
\Eb\left(\|\Bx^{\l}_*-\bBx^{\l}\|^2+\|x_{m^*}^{\l}-\bx_m^{\l}\|^2\right)
& \le
\tfrac{\M\a^s}{k\mu(1-M\a^s)}\left\{f(\bBx^{0})+f_m(\bx_{m}^{0})-[f(\Bx^{*})+f_m(x_m^{*})]\right\},
\end{align*}
where  $(\eBx^*,x_m^*)$ and $(\eBx^{\l}_*, x_{m^*}^{\l})$ are the optimal solutions to \eqref{c:problem_non} and the $\l$-th subproblem \eqref{csubprob}, respectively.
\end{lemma}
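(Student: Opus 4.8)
The plan is to follow the template of the RapGrad analogue (Lemma~\ref{lemma_final}), except that the clean one-step contraction supplied by Theorem~\ref{theo3} eliminates the auxiliary-sequence induction that was needed there. For the $\l$-th call to Algorithm~\ref{calg:rand}, write $a_\l := \|\Bx_*^\l-\bBx^{\l-1}\|^2+\|x_{m^*}^\l-\bx_m^{\l-1}\|^2$ for the ``input distance'' and $b_\l := \|\Bx_*^\l-\bBx^\l\|^2+\|x_{m^*}^\l-\bx_m^\l\|^2$ for the ``output distance'', where $(\Bx_*^\l,x_{m^*}^\l)$ is the exact optimum of subproblem~\eqref{csubprob}. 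Since the initialization sets $\Bx^0=\bBx^{\l-1},\,x_m^0=\bx_m^{\l-1}$ and the algorithm returns $\Bx^s=\bBx^\l,\,x_m^s=\bx_m^\l$, applying Theorem~\ref{theo3} conditionally on the history and then taking total expectation yields the contraction $\Eb[b_\l]\le \M\a^s\,\Eb[a_\l]$, with $\M=2\hat L/\mu=4+2L/\mu$.

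Next I would extract a telescoping descent inequality from subproblem optimality. Because every subproblem shares the same linear constraint $\bfA\Bx+x_m=\bfb$, the exact solution $(\Bx_*^{\l-1},x_{m^*}^{\l-1})$ of the previous subproblem is feasible for the $\l$-th one, and the starting point $(\bBx^0,\bx_m^0)$ is feasible by construction, so I set $\Bx_*^0:=\bBx^0,\,x_{m^*}^0:=\bx_m^0$. Testing optimality of $(\Bx_*^\l,x_{m^*}^\l)$ against $(\Bx_*^{\l-1},x_{m^*}^{\l-1})$ and unfolding the definitions of $\psi^\l,\psi_m^\l$ gives
\[
f(\Bx_*^\l)+f_m(x_{m^*}^\l)+\mu\,a_\l \;\le\; f(\Bx_*^{\l-1})+f_m(x_{m^*}^{\l-1})+\mu\,b_{\l-1},
\]
the crucial point being that the proximal centers are $(\bBx^{\l-1},\bx_m^{\l-1})$, so the trailing proximal term on the right is exactly $\mu\,b_{\l-1}$. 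Summing over $\l=1,\dots,k$, the function values telescope and are bounded below by the global optimum $f(\Bx^*)+f_m(x_m^*)$, producing $\mu\sl\Eb[a_\l]\le \mathcal{D}^0+\mu\sl\Eb[b_{\l-1}]$.

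The final step combines the two ingredients. Since $b_0=0$ (as $\Bx_*^0=\bBx^0,\,x_{m^*}^0=\bx_m^0$), the contraction gives $\sl\Eb[b_{\l-1}]\le\sl\Eb[b_\l]\le\M\a^s\sl\Eb[a_\l]$. Substituting and rearranging yields $\mu(1-\M\a^s)\sl\Eb[a_\l]\le\mathcal{D}^0$; the hypothesis $s\ge\lceil-\log\M/\log\a\rceil$ forces $\M\a^s<1$, so $\sl\Eb[a_\l]\le\mathcal{D}^0/[\mu(1-\M\a^s)]$, and one more application of the contraction bounds $\sl\Eb[b_\l]\le \M\a^s\mathcal{D}^0/[\mu(1-\M\a^s)]$. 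Dividing both by $k$ to account for the uniform random index $\hat\l\in[k]$ delivers precisely the two stated inequalities.

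The step I expect to require the most care is the telescoping descent inequality. One must identify the feasible comparison point as the \emph{previous exact subproblem solution} $(\Bx_*^{\l-1},x_{m^*}^{\l-1})$ rather than the RapDual iterate $(\bBx^{\l-1},\bx_m^{\l-1})$, which need not satisfy the linear constraint, and then recognize that the proximal distance of this comparison point to the current center is exactly the prior output distance $b_{\l-1}$. This coupling between consecutive subproblems is what makes the objective values telescope cleanly against a feasible, monotone sequence; once it is in place, everything else reduces to the single contraction $\Eb[b_\l]\le\M\a^s\Eb[a_\l]$ and elementary summation, so no secondary induction of the kind used for RapGrad is needed.
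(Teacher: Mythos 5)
Your proposal is correct and follows essentially the same route as the paper's proof: the one-step contraction from Theorem~\ref{theo3} (the paper's \eqref{c_non1}), the telescoping optimality inequality obtained by comparing $(\eBx_*^{\l},x_{m^*}^{\l})$ against the feasible previous optimum $(\eBx_*^{\l-1},x_{m^*}^{\l-1})$ with the convention $(\eBx_*^{0},x_{m^*}^{0})=(\bBx^{0},\bx_m^{0})$ (the paper's \eqref{c_non2}), and the final summation using $b_0=0$ followed by division by $k$ for the random index $\hat\l$. The ``crucial point'' you flag --- that the proximal term at the comparison point equals the previous output distance $b_{\l-1}$ --- is exactly the coupling the paper exploits in \eqref{c_non2}--\eqref{c_non3}, so there is nothing further to reconcile.
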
	
\begin{proof}
According to Theorem \ref{theo3}, we have
\beq\label{c_non1}
\Eb\left(\|\bBx^{\l}-\Bx_*^{\l}\|^2+\|\bx_m^{\l}-x_{m^*}^{\l}\|^2\right)\leq 
  \a^s\M(\|\bBx^{\l-1}-\Bx^{\l}_*\|^2+\|\bx_m^{\l-1}-x_{m^*}^{\l}\|^2).
\eeq
Let us denote $(\Bx^{0}_*,x_{m^*}^{0})$ = $(\bBx^0, \bx_{m}^{0})$ and by selection, it is feasible to subproblem \eqref{csubprob} when $\l=1$.
Since $(\Bx^{\l}_*,x_{m^*}^{\l})$ is optimal and $(\Bx_*^{\l-1}, x_{m^*}^{\l-1})$ is feasible to the $\l$-th subproblem, we have
\[\psi^{\l}(\Bx_*^{\l})+\psi_m^{\l}(x_{m^*}^{\l})\leq \psi^{\l}(\Bx_*^{\l-1})+\psi_m^{\l}(x_{m^*}^{\l-1}).\]
Plugging in the definition of $\psi^{\l}$ and $\psi_m^{\l}$ in the above inequality, and summing up from $\l=1$  to $k$, we have 
\begin{align}\label{c_non2}
\sl [f(\Bx^{\l}_*)+f_m(x_{m^*}^{\l})+&\mu(\|\Bx^{\l}_*-\bBx^{\l-1}\|^2+\|x_{m^*}^{\l}-\bx_m^{\l-1}\|^2)]
 \le \nn\\
 &\sl [f(\Bx^{\l-1}_*)+f_m(x_{m^*}^{\l-1})+\mu (\|\Bx_{*}^{\l-1}-\bBx^{\l-1}\|^2+\|x_{m^*}^{\l-1}-\bx_m^{\l-1}\|^2)].
\end{align}
Combining \eqref{c_non1} and \eqref{c_non2} and noticing that $(\Bx^{0}_*,x_{m^*}^{0})$ = $(\bBx^0, \bx_{m}^{0})$, we have
\begin{align}\label{c_non3}
\mu\sl\Eb(\|\Bx^{\l}_*-\bBx^{\l-1}\|^2+\|x_{m^*}^{\l}-\bx_m^{\l-1}\|^2)&\leq \sl\{f(\Bx^{\l-1}_*)+f_m(x_{m^*}^{\l-1})-[f(\Bx^{\l}_*)+f_m(x_{m^*}^{\l})]\}\nn\\
&~~~+\mu\sl\Eb(\|\Bx^{\l}_*-\bBx^{\l}\|^2+\|x_{m^*}^{\l}-\bx_m^{\l}\|^2) \nn\\
&\leq f(\bBx^{0})+f_m(\bx_{m}^{0})-[f(\Bx^{*})+f_m(x_m^{*})]\nn \\
&~~~+\mu\a^s \M\sl\Eb(\|\Bx^{\l}_*-\bBx^{\l-1}\|^2+\|x_{m^*}^{\l}-\bx_m^{\l-1}\|^2).
\end{align}
In view of \eqref{c_non3} and \eqref{c_non1}, we have
\begin{align*}
\sl\Eb\left(\|\Bx^{\l}_*-\bBx^{\l-1}\|^2+\|x_{m^*}^{\l}-\bx_m^{\l-1}\|^2\right)
& \le
\tfrac{1}{\mu(1-M\a^s)}\left\{f(\bBx^{0})+f_m(\bx_{m}^{0})-[f(\Bx^{*})+f_m(x_m^{*})]\right\}\\
\sl\Eb\left(\|\Bx^{\l}_*-\bBx^{\l}\|^2+\|x_{m^*}^{\l}-\bx_m^{\l}\|^2\right)
& \le
\tfrac{\M\a^s}{\mu(1-M\a^s)}\left\{f(\bBx^{0})+f_m(\bx_{m}^{0})-[f(\Bx^{*})+f_m(x_m^{*})]\right\},
\end{align*}
which, in view of the fact that $\hat\l$ is chosen randomly in $[k]$, implies our results.
\end{proof}

\vgap

Now we are ready to prove the results in Theorem \ref{Thm_cmain} with all the results proved above.

\noindent {\bf Proof of Theorem  \ref{Thm_cmain}.}
By the optimality condition of the $\hat\l$-th subproblem \eqref{csubprob}, there exists some $\lbd^*$ such that 
\begin{align}             
&\nabla \psi^{\hat\l}(\Bx_*^{\hat\l})+\bfA^{\top}\lbd^*\in -N_{X}(\Bx_*^{\hat\l}),\nn \\
& \nabla \psi_m^{\hat\l}(x_{m^*}^{\hat\l})+\lbd^*=0, \nn \\ 
&\bfA\Bx_*^{\hat\l}+x_{m^*}^{\hat\l}=\bfb .\label{feasibility} 
\end{align}
Plugging in the definition of $\psi^{\hat\l}$ and $\psi_m^{\hat\l}$, we have 
\begin{align}
&\nabla f^{\hat\l}(\Bx_*^{\hat\l})+2\mu(\Bx_*^{\hat\l}-\bBx^{\hat{\l}-1})+\bfA^{\top}\lbd^*\in -N_{X}(\Bx_*^{\hat\l}),\label{copt_x}\\
&\nabla f_m^{\hat\l}(x_{m^*}^{\hat\l})+2\mu(x_{m^*}^{\hat\l}-\bx_m^{\hat{\l}-1})+\lbd^*=0.\label{copt_xm}
\end{align}
Now we are ready to evaluate the quality of the solution $(\bBx^{\hat\l},\bx_m^{\hat\l})$. In view of \eqref{copt_x} and Lemma 
\ref{clemma_final}, we have
\begin{align*}
\Eb\left[d(\nabla f^{\hat\l}(\Bx_*^{\hat\l})+\bfA^{\top}\lbd^*, -N_{X}(\Bx_*^{\hat\l}))\right]^2&\leq \Eb\|2\mu(\Bx_*^{\hat\l}-\bBx^{\hat{\l}-1})\|^2\\
& \leq \tfrac{4\mu}{k(1-\M\a^s)}\left\{f(\bBx^{0})+f_m(\bx_m^{0})-[f(\Bx^{*})+f_m(x_m^{*})]\right\}\\
& \leq \tfrac{8\mu}{k}\left\{f(\bBx^{0})+f_m(\bx_m^{0})-[f(\Bx^{*})+f_m(x_m^{*})]\right\}.
\end{align*}
Similarly, due to  \eqref{copt_xm} and Lemma 
\ref{clemma_final}, we have 
\begin{align*}
\Eb\|\nabla f_m^{\hat\l}(x_m^{\hat\l})+\lbd^*\|^2&~=\Eb\|\nabla f_m^{\hat\l}(x_m^{\hat\l})-\nabla f_m^{\hat\l}(x_{m^*}^{\hat\l})-2\mu(x_{m^*}^{\hat\l}-x_m^{\hat{\l}-1})\|^2\\
&~\leq 2\Eb\{\|\nabla f_m^{\hat\l}(x_m^{\hat\l})-\nabla f_m^{\hat\l}(x_{m^*}^{\hat\l})\|^2+4\mu^2\|x_{m^*}^{\hat\l}-x_m^{\hat{\l}-1}\|^2\}\\
&~\leq \Eb\left\{18\mu^2\|x_{m^*}^{\hat\l}-\bx_m^{\hat{\l}}\|^2+8\mu^2\|x_{m^*}^{\hat\l}-\bx_m^{\hat{\l}-1}\|^2\right\}\\
&~\leq \mu^2(8+18\M\a^s)\Eb\{\|\Bx^{\hat\l}-\bBx^{\hat{\l}-1}\|^2+\|x_{m^*}^{\hat\l}-\bx_m^{\hat{\l}-1}\|^2\}\\
&~\leq \tfrac{2\mu\left(4+9\M\a^s\right)}{k(1-\M\a^s)}   \Eb\left\{f(\bBx^{0})+f_m(\bx_{m}^{0})-[f(\Bx^{*})+f_m(x_m^{*})]\right\}\\
&~\leq \tfrac{34\mu}{k}\left\{f(\bBx^{0})+f_m(\bx_{m}^{0})-[f(\Bx^{*})+f_m(x_m^{*})]\right\}.
\end{align*}
By Lemma \ref{clemma_final} we have 
\begin{align*}
\Eb\|\Bx^{\hat\l}-\Bx_*^{\hat\l}\|^2	&\leq \tfrac{\M\a^s}{k\mu(1-\M\a^s)}\left\{f(\bBx^{0})+f_m(x_{m}^{0})-[f(\Bx^{*})+f_m(x_m^{*})]\right\}\\
&\leq \tfrac{2\M\a^s}{k\mu}\left\{f(\bBx^{0})+f_m(\bx_{m}^{0})-[f(\Bx^{*})+f_m(x_m^{*})]\right\}\\
&\leq \tfrac{2\mu}{kL^2}\left\{f(\bBx^{0})+f_m(\bx_{m}^{0})-[f(\Bx^{*})+f_m(x_m^{*})]\right\}.
\end{align*}
Combining \eqref{feasibility} and Lemma \ref{clemma_final}, we have 
\begin{align*}
\Eb\|\bfA\bBx^{\hat\l}+\bx_m^{\hat\l}-\bfb \|^2&=\Eb\|\bfA(\bBx^{\hat\l}-\Bx_*^{\hat\l})+\bx_m^{\hat\l}-x_{m^*}^{\hat\l}\|^2\\
&\leq 2\Eb\{\|\bfA\|^2\|\bBx^{\hat\l}-\Bx_*^{\hat\l}\|^2+\|\bx_m^{\hat\l}-x_{m^*}^{\hat\l}\|^2\}\\
&\leq 2 (\|\bfA\|^2+1) \Eb
\{\|\bBx^{\hat\l}-\Bx_*^{\hat\l}\|^2+\|\bx_m^{\hat\l}-x_{m^*}^{\hat\l}\|^2\}\\
&\leq \tfrac{2(\|\bfA\|^2+1)\M\a^s}{k\mu(1-\M\a^s)} \left\{f(\bBx^{0})+f_m(\bx_{m}^{0})-[f(\Bx^{*})+f_m(x_m^{*})]\right\}\\
& \leq\tfrac{2(\|\bfA\|^2+1)\mu}{kL^2}\left\{f(\bBx^{0})+f_m(\bx_{m}^{0})-[f(\Bx^{*})+f_m(x_m^{*})]\right\}.
\end{align*}
\endproof

\section{Numerical experiments} \label{sec-num}   
In this section, we report some preliminary numerical results for both RapGrad and RapDual and demonstrate their potential advantages 
in Subsection \ref{numerical 1} and \ref{numerical 2}, respectively.

\subsection{Nonconvex finite-sum optimization}\label{numerical 1}
We consider the least square problem with the smoothly clipped absolute deviation (SCAD) penalty as a testing problem. SCAD has been proved in \cite{fan2001variable} to be efficient in variable selection. While the original SCAD $p_{\lbd,\g}$ defined below does not have smooth gradient at $x=0$, we can bypass this potential problem by using a small positive 
number $\e$ to obtain a smooth approximation $p_{\lbd,\g,\e}$:
\[
p_{\lbd,\g}(x) = \left\{
\begin{aligned}
&\lbd|x|  && ~\text{if} ~|x|\leq \lbd,\\
&\tfrac{2\g\lbd|x|-x^2-\lbd^2}{2(\g-1)}  && ~\text{if}~ \lbd<|x|<\g \lbd,\\
&\tfrac{\lbd^2(\g+1)}{2}  && ~\text{if} ~|x|\geq\g\lbd,
\end{aligned}
\right.
\quad
p_{\lbd,\g,\e}(x) = \left\{
\begin{aligned}
&\lbd(x^2+\e)^{\frac12}  && ~\text{if} ~(x^2+\e)^{\frac12}\leq \lbd,\\
&\tfrac{2\g\lbd(x^2+\e)^{\frac12}-(x^2+\e)-\lbd^2}{2(\g-1)}  && ~\text{if}~ \lbd<(x^2+\e)^{\frac12}<\g \lbd,\\
&\tfrac{\lbd^2(\g+1)}{2}   && ~\text{if} ~(x^2+\e)^{\frac12}\geq\g\lbd,
\end{aligned}
\right.
\]
where $\g>2$, $\lbd >0$, and $\e>0$ are given. Using $p_{\lbd,\g,\e}$, our problem of interest is given by 
\[
\min_{x\in \R^n} \tfrac{1}{2m}\|Ax-b\|^2 + \tfrac{\rho}{2}\tsum_{i=1}^np_{\lbd,\g,\e}(x_i),\] 
which can be viewed as  a special case of problem \eqref{problem_non}  with $f_i(x) = \tfrac{1}{2}(a_i^{\top}x-b_i)^2 + \tfrac{\rho}{2}\sum_{i=1}^np_{\lbd,\g,\e}(x_i)$. 
Here $a_i$ denotes the $i$-th row of $A$.
It is easy to see that assumptions \eqref{assumptions_non1} and \eqref{assumptions_non2} are satisfied with $\mu=\rho/[2(\g-1)]$ 
and $L = \rho\lbd \e^{-1/2} / 2+\max_{1\leq i\leq m}\|a_i\|^2$.  Thus the condition number $L/\mu$ usually dominates $m$. 

We test Algorithm \ref{alg_non} (RapGrad), both randomized and deterministic batch versions, on some randomly generated data sets with dimension $m=1000$, $n=100$. 
Note all entries of matrix $A$ and $20$ uniformly chosen  components of $\hat x$  are i.i.d from N(0,1). The remaining variables of $\hat x$ are set to $0$ and $b$ is computed by $b=A \hat x$.
The parameters used in $p_{\lbd,\g,\e}$ are   $\e = \num{e-3}$, $\lbd = 2$, $\g = 4$ and the penalty $\rho$ is set to $0.01$. Notice that the $x$-axis represents the number of gradient evaluations divided by $m$, which is counted as number of passes to the dataset, i.e., each iteration of randomized  gradient computation is $1/m$ pass and the full-gradient computation of counts as $1$ pass.
As is shown by Figure \ref{fig 1}, randomized version can reduce both objective value and norm of gradient faster than its batch counterpart in terms of number of gradient evaluations.  
We also observe that there are some zig-zag pattern for the batch algorithm in Figure~\ref{fig 1}.b), which might have been related some numerical stability issue.
\begin{figure}[H].  
\centering
\begin{minipage}[t]{0.4\linewidth}
\centering
\includegraphics[width=\textwidth]{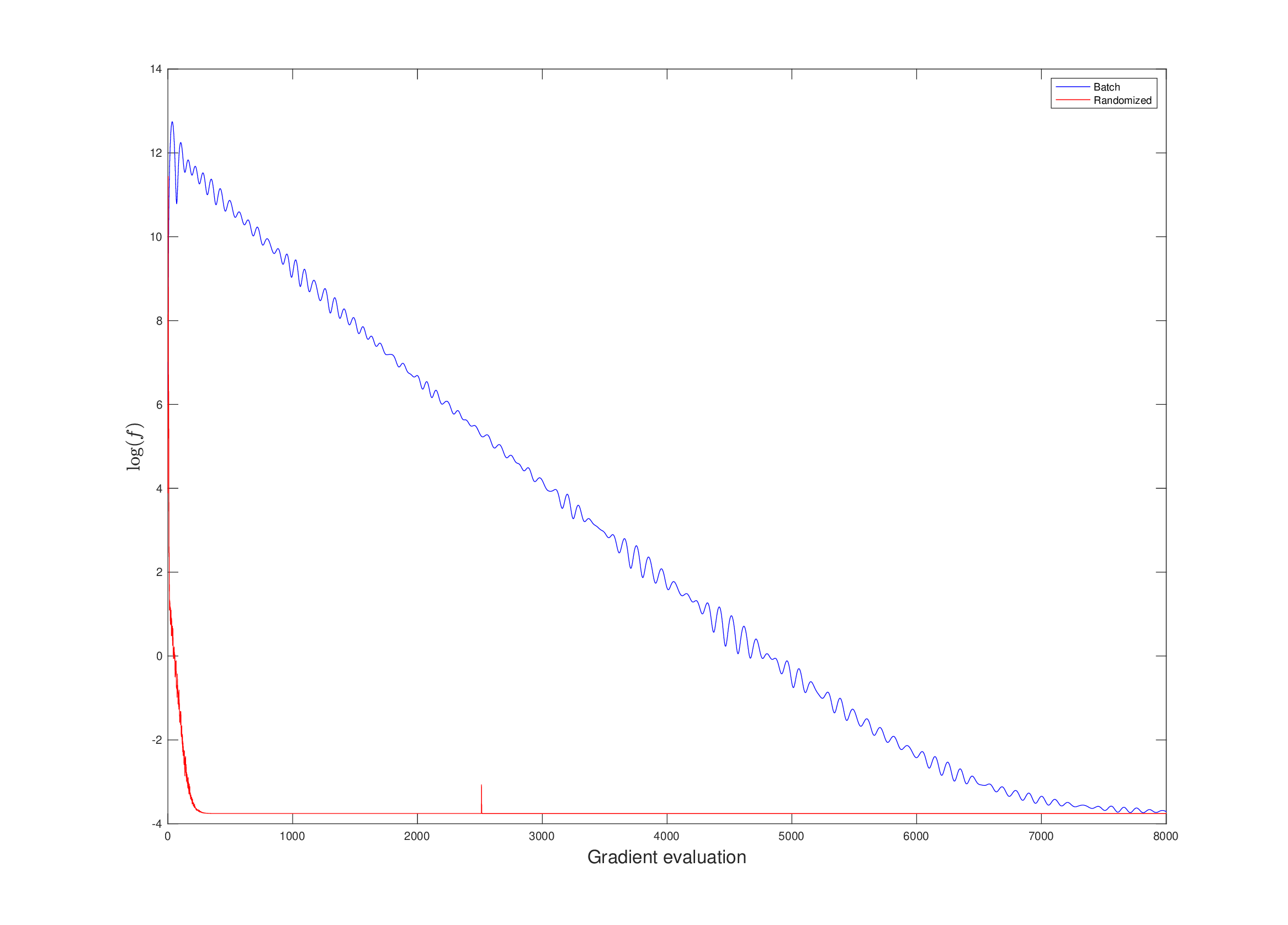}
\end{minipage}
\begin{minipage}[t]{0.4\linewidth}
\centering
\includegraphics[width=\textwidth]{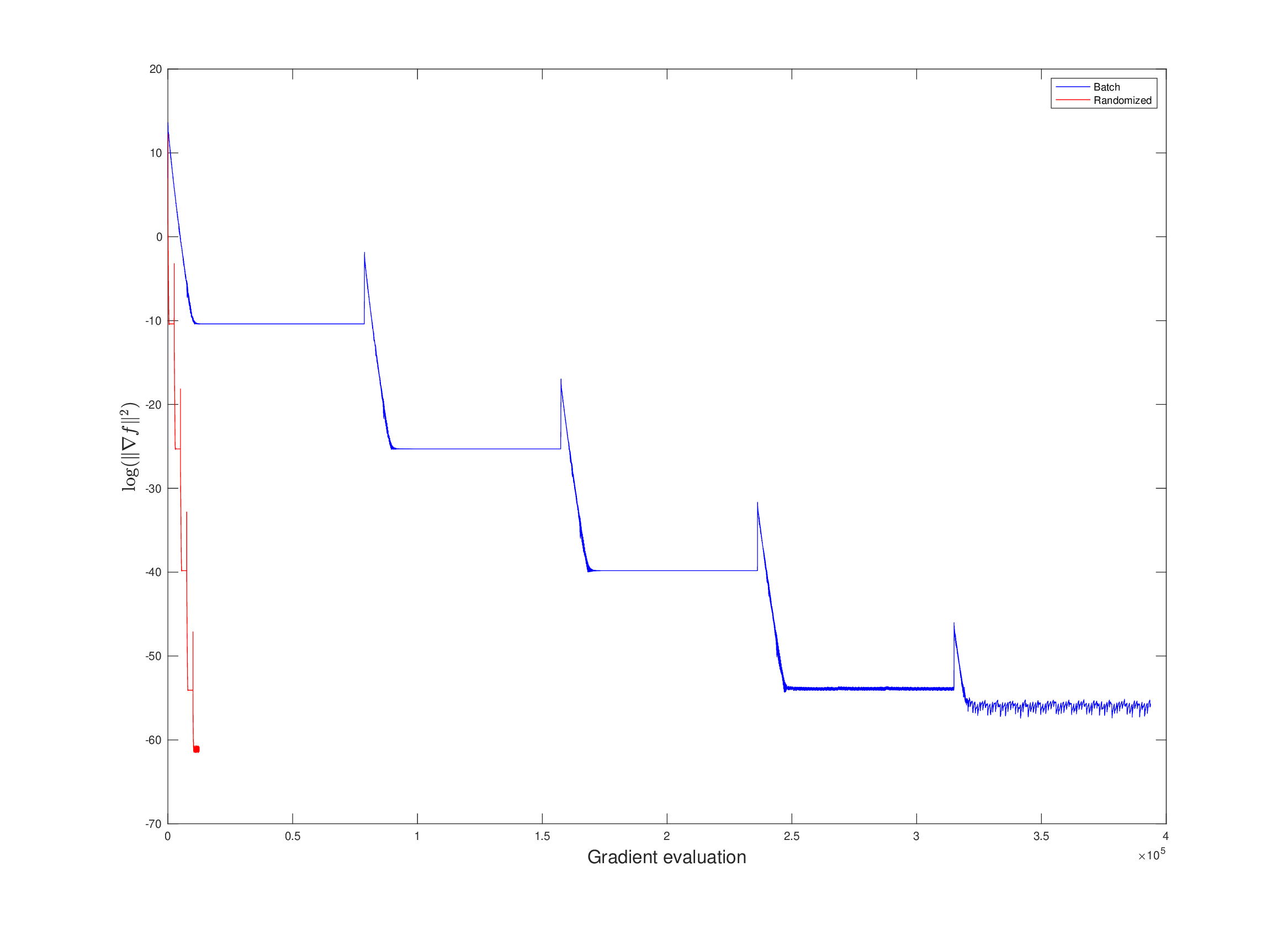}
\end{minipage}
\caption{Batch and randomized versions of  RapGrad on  SCAD-penalized least squares. Left Figure: Comparison on objective value $f$. Right Figure: Comparison on square of gradient norm $\|\n f\|^2$.}\label{fig 1}
\end{figure} 

We also compare our RapGrad with the full SVRG in non-convex setting (Algorithm 2 in \cite{allen2016variance}) and Accelerated Gradient method (AG) in \cite{ghadimi2016accelerated}.  Notice that, both RapGrad and SVRG are randomized algorithms, while AG is a deterministic batch method. For the sake of fairness in comparison, all the parameters in the three algorithms mentioned above are set to their theoretical values without any tuning in our experiments.
Figure \ref{fig_compare} shows our Algorithm \ref{alg_non} not only reduces the function value as well as gradient norm faster than both SVRG and AG.

\begin{figure}[H]
\centering
\begin{minipage}[t]{0.4\linewidth}
\centering
\includegraphics[width=\textwidth]{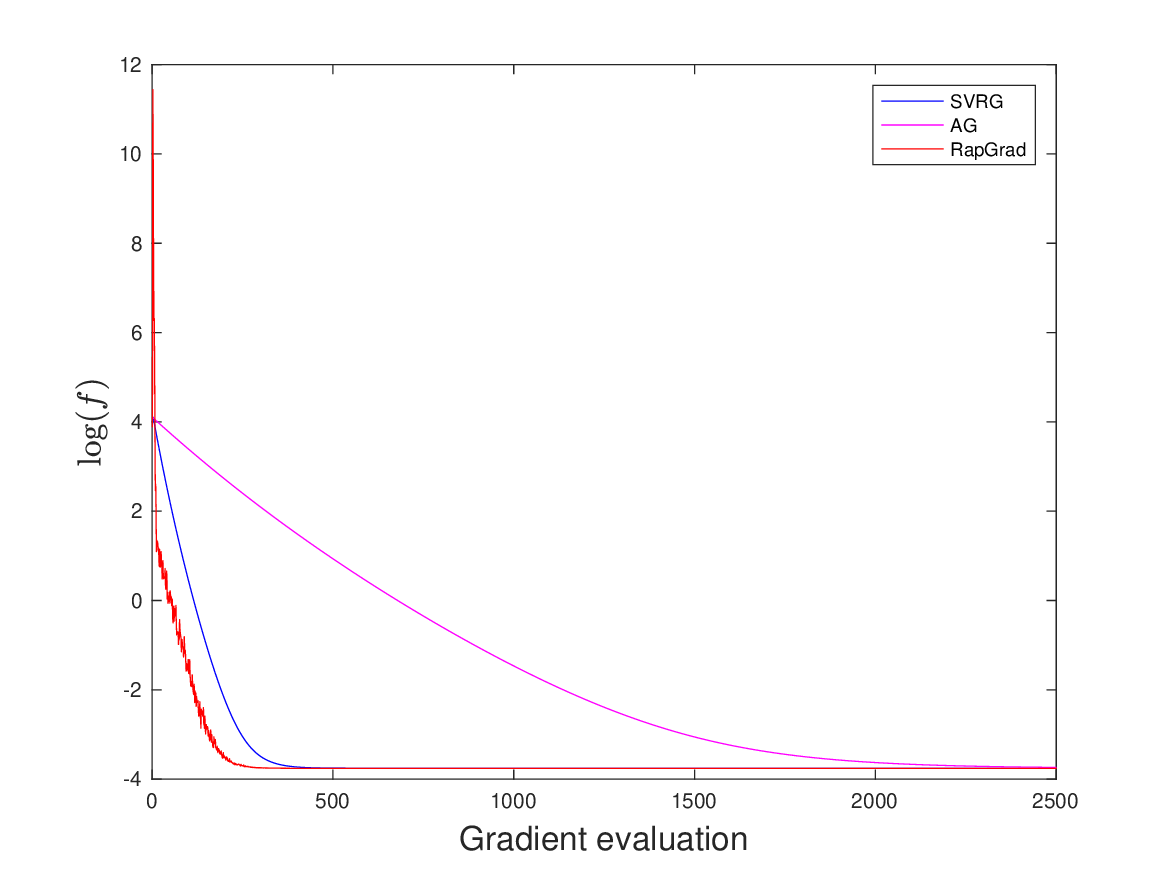}
\end{minipage}
\begin{minipage}[t]{0.4\linewidth}
\centering
\includegraphics[width=\textwidth]{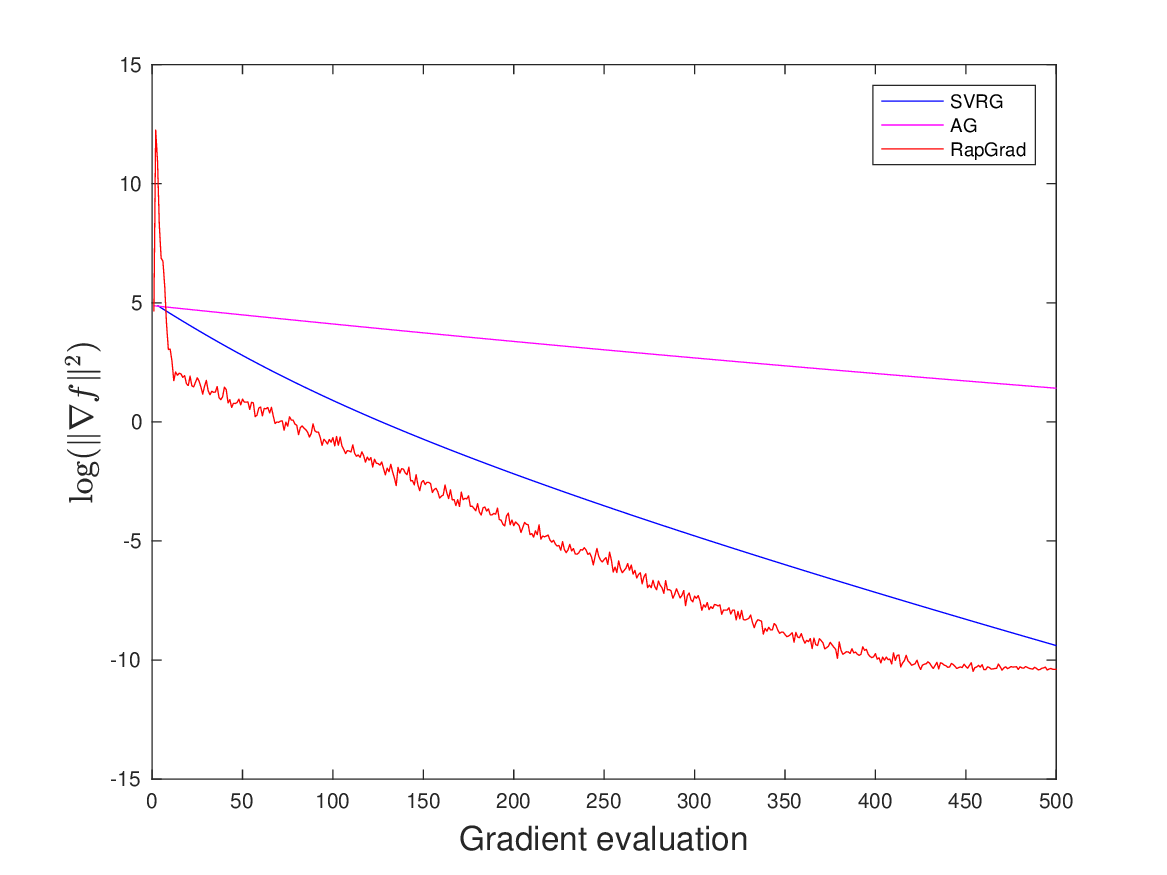}
\end{minipage}
\caption{Comparison on function value $f$ and square of gradient norm $\|\n f\|^2$ for Algorithm \ref{alg_non}, SVRG and AG. Left Figure: Comparison on function value $f$. Right Figure:  Comparison on square of gradient norm $\|\n f\|^2$.}
\label{fig_compare}
\end{figure}.

In fact, our estimate on $s=\left\lceil-\log (6M/5)/\log \a\right\rceil$ seems to be too pessimistic, 
and the subproblems are solved to unnecessarily high accuracy. As a result,  some spikes show in Figure \ref{fig 1}, 
which correspond to the occasions when an inner loop for solving the subproblem completes and a new search point is obtained to 
update the subproblem. Early termination of the inner loops may help to remove those spikes. Reducing the number of 
inner iterations may not guarantee above mentioned convergence rate theoretically, but may improve the practical performance
of RapGrad for this problems in our experiments. From Figure \ref{fig 2} and Figure \ref{fig 3}, we can conclude that, by using smaller $s$, 
our randomized algorithm is able to reduce $f$ and $\|\n f\|^2$ much faster, whereas the batch version converges faster 
in terms of the gradient norm.
  
\begin{figure}[H]
\centering
\begin{minipage}[t]{0.4\linewidth}
\centering
\includegraphics[width=\textwidth]{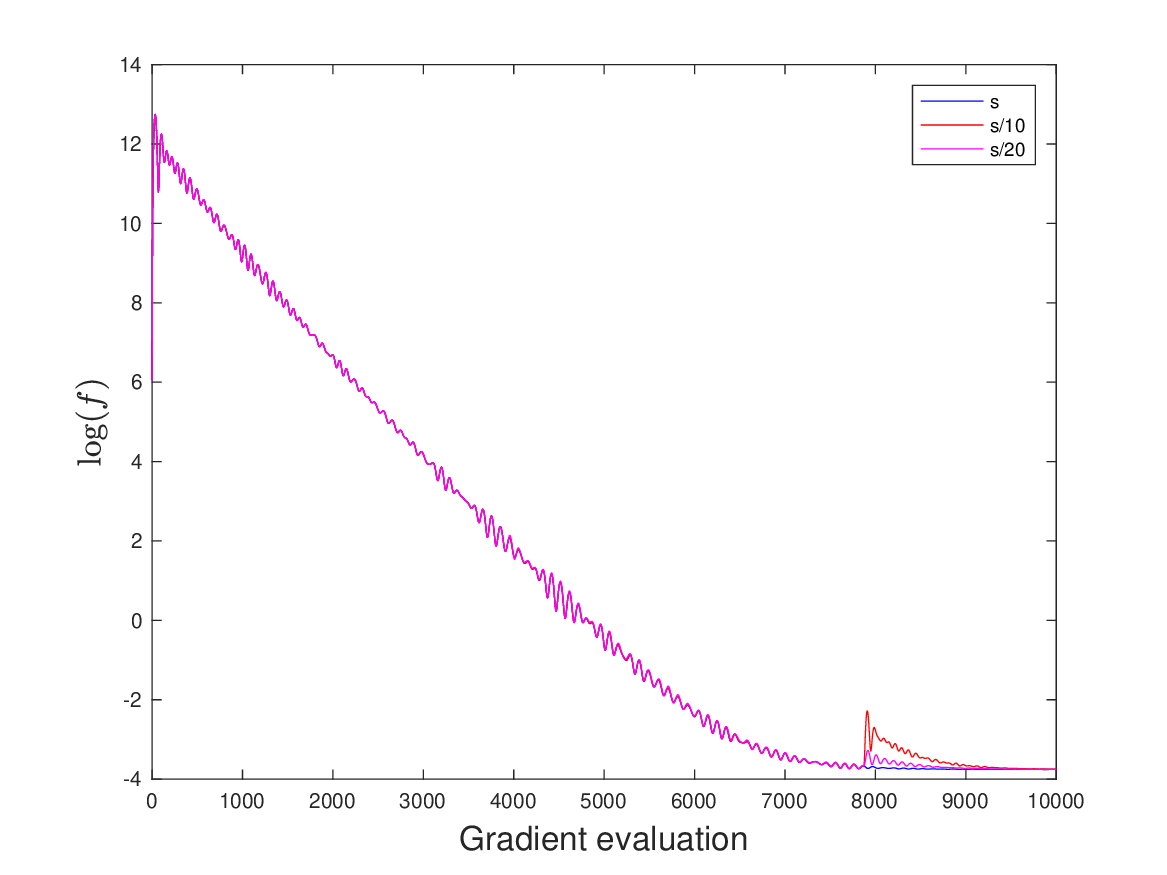}
\end{minipage}
\begin{minipage}[t]{0.4\linewidth}
\centering
\includegraphics[width=\textwidth]{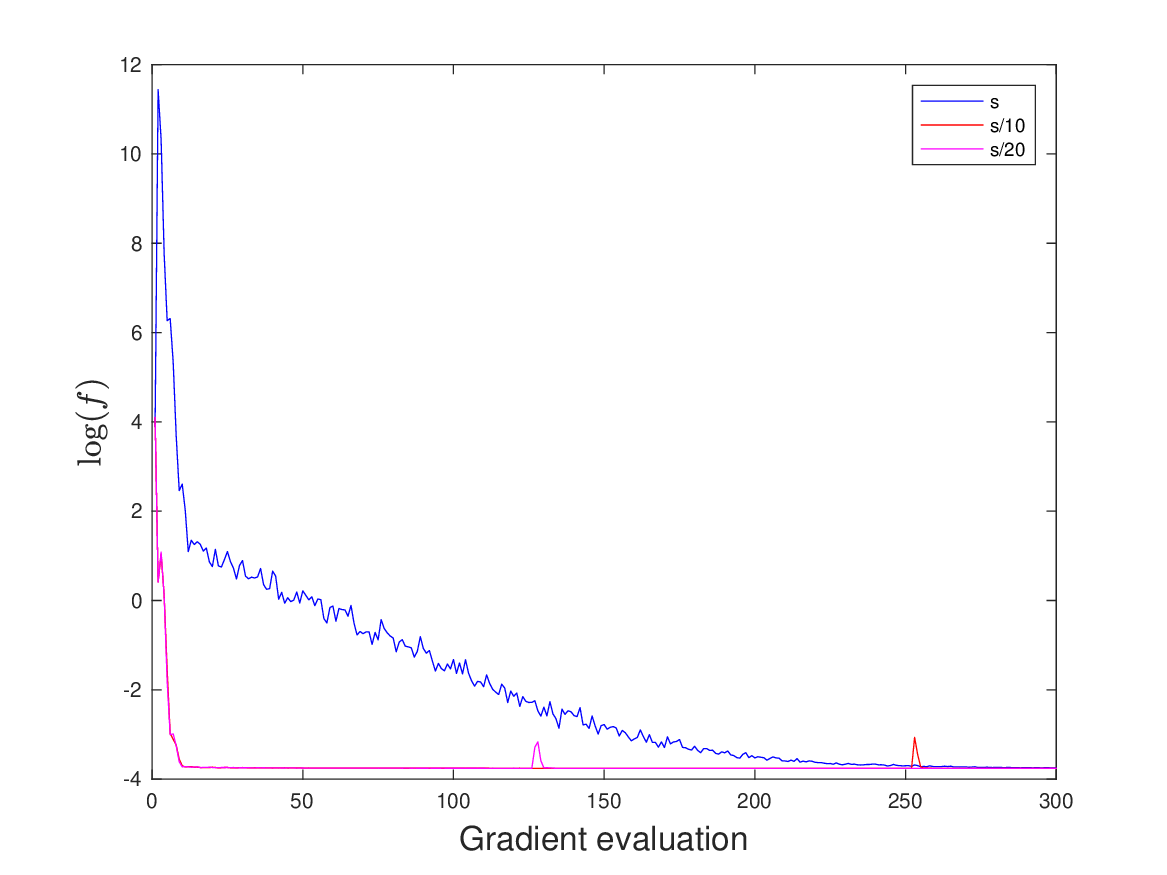}
\end{minipage}
\caption{Comparison on objective value for batch and randomized versions of RapGrad, when numbers of inner iterations are $s$, $s/10$, and $s/20$, respectively. Left Figure: Batch version. Right Figure: Randomized version.}
\label{fig 2}
\end{figure}

\begin{figure}[H]
\centering
\begin{minipage}[t]{0.4\linewidth}
\centering
\includegraphics[width=\textwidth]{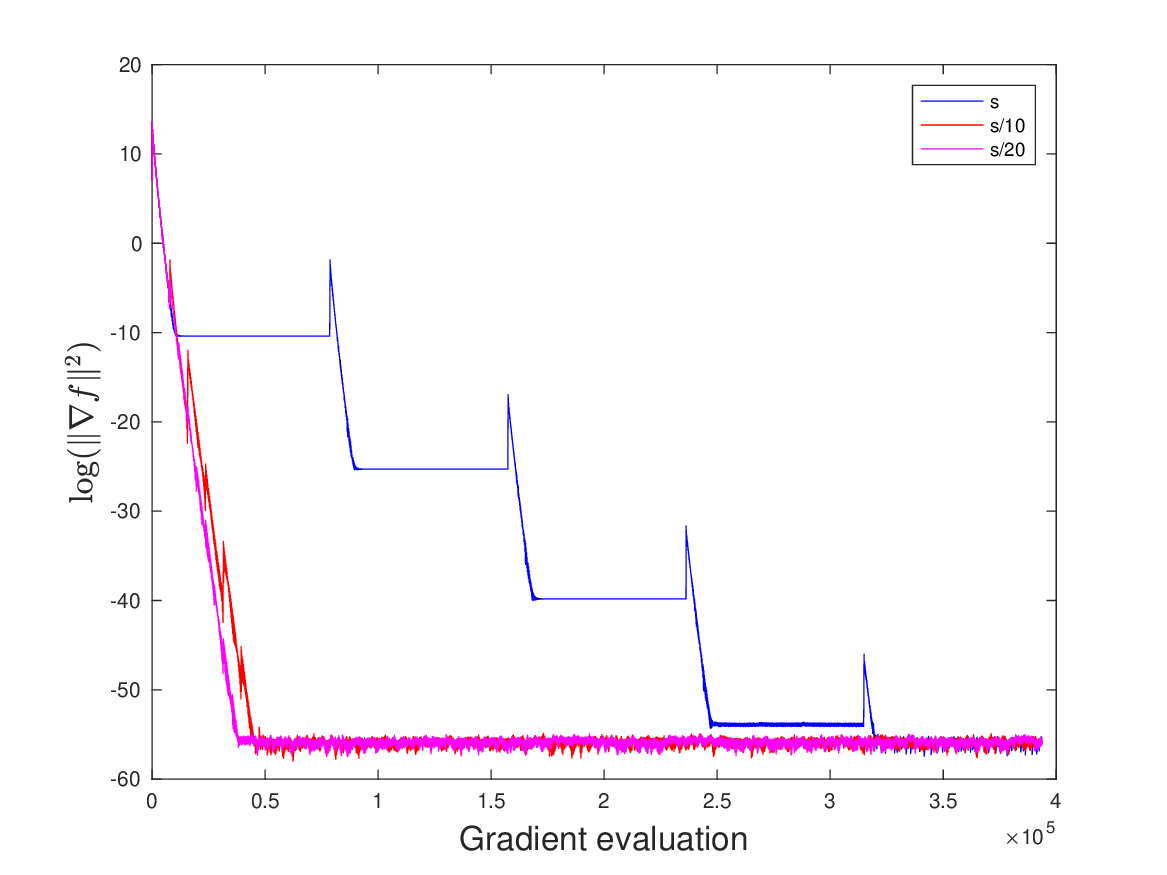}
\end{minipage}
\begin{minipage}[t]{0.4\linewidth}
\centering
\includegraphics[width=\textwidth]{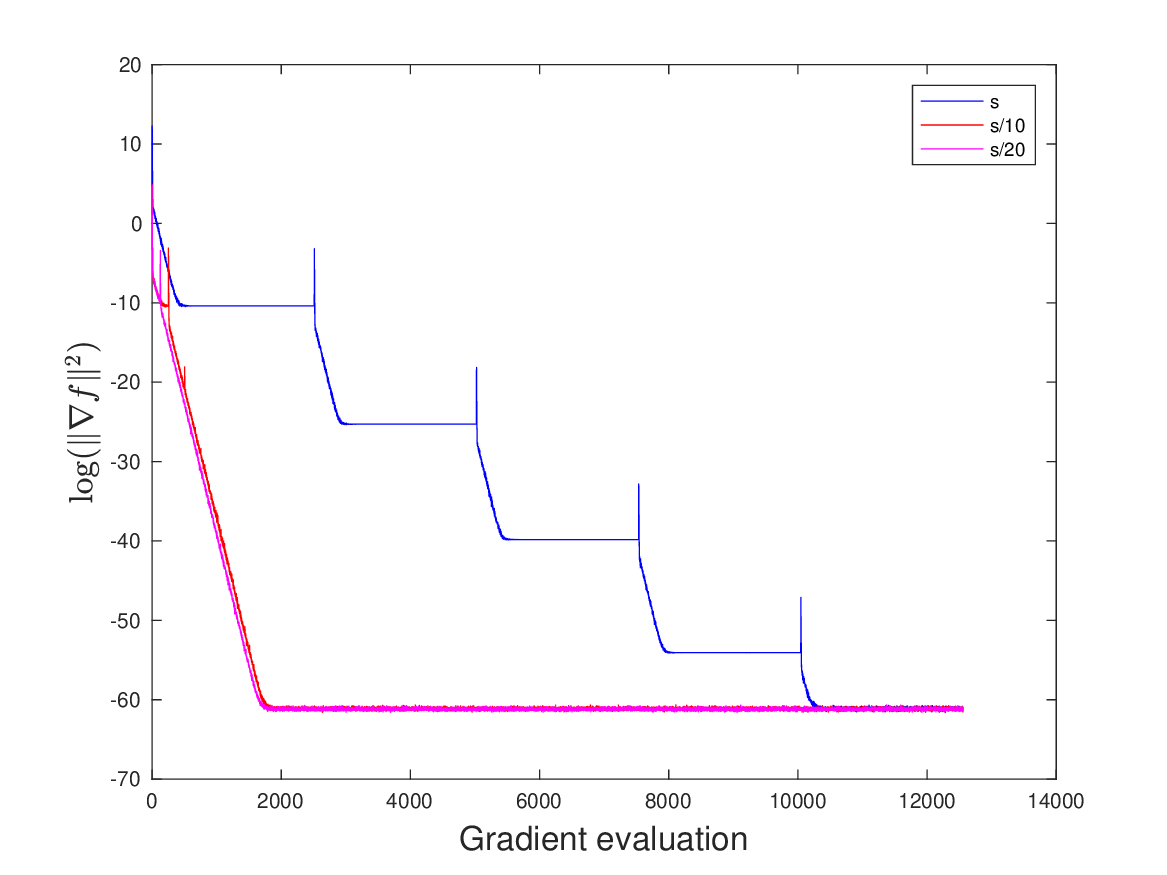}
\end{minipage}
\caption{Comparison on square of gradient norm $\|\n f\|^2$ for batch and randomized versions of RapGrad when numbers of inner iterations are $s$, $s/10$, and $s/20$, respectively. Left Figure: Batch version. Right Figure: Randomized version.}
\label{fig 3}
\end{figure}
Inspired by the above experiments, we have an efficient way to tune RapGrad to yield better performance. We first run RapGrad with several different numbers of inner iterations $s^{\prime}$, for instance $s^{\prime}=s$, $s^{\prime}=s/10$, $s^{\prime}=s/100$ , for a fixed number, say $\num{100}$, of passes through the dataset, then we use the best $s^{\prime}$  corresponding to the smallest norm of gradient as the actual $s$ for the tuned RapGrad. In the following table, we compare the RapGrad without tuning, tuned RapGrad, SVRG as well as AG on testing problems of different sizes, with stoping criteria $\|\n f\|^2< \num{e-10}$ and maximal pass $\num{3e4}$. The table shows that this simple tuning technique is able to bring huge performance improvement. An interesting observation is that RapGrad without tuning is more likely to outperform SVRG when $n$ is large relative to $m$.
\begin{table}[H]  
\centering
\begin{tabular}{|c|c|c|c|c|c|}
\hline
&RapGrad&RapGrad\_tuned&SVRG& AG\\
 \hline
 $m=1000$, $n=100$& $2850$&  $502$&$1143$&$\num{3e4}$\\ 
\hline
 $m=1000$, $n=300$& $4894$&  $874$&$5493$&$\num{3e4}$\\ 
  \hline
   $m=1000$, $n=500$& $11299$&  $1165$&$19029$&$\num{3e4}$\\ 
  \hline
 $m=800$, $n=100$& $3113$&  $559$&$1245$&$\num{3e4}$\\ 
  \hline
   $m=800$, $n=300$& $5467$&  $970$&$7743$&$\num{3e4}$\\ 
\hline
 $m=800$, $n=500$& $12673$&  $1290$&$\num{3e4}$&$\num{3e4}$\\ 
\hline
 $m=600$, $n=100$& $3735$&  $667$&$1752$&$\num{3e4}$\\
\hline
 $m=600$, $n=300$& $10978$&  $1137$&$13638$&$\num{3e4}$\\ 
\hline
 $m=600$, $n=500$& $14965$&  $490$&$\num{3e4}$&$\num{3e4}$\\ 
  \hline

\end{tabular}
\caption{Comparison on numbers of passes to the dataset with stopping criteria $\|\n f\|^2< \num{e-10}$ and maximal pass $\num{3e4}$.}\label{table1}
\end{table}
 
\subsection{Nonconvex multi-block optimization}\label{numerical 2}
We consider the following compressed sensing problem  to test the performance of RapDual: 
\[
\begin{aligned}
\min_{x_i\in X_i} &~ \tsum_{i=1}^m \mathcal{P}_{\lbd,\g,\e}(x_i)\\
\text{s.t. } &~\smn A_ix_i= b,
\end{aligned}
\]	
where each $x_i = (x_i^1,\ldots,x_i^{d_i})$ is a vector of dimension $d_i$, and $\mathcal{P}_{\lbd,\g,\e}(x_i) =\sum_{j=1}^{d_i}p_{\lbd,\g,\e}(x_i^j)$. Instead of using the $\l_1$ norm as the objective function, we replace it with the smoothed SCAD function, which is also capable of finding sparse solutions.
Since the smoothed SCAD function is separable in each component, we can easily identify an invertible $n\times n$ submatrix from $[A_1,A_2,\ldots,A_m]$. W.L.O.G, we assume the last block $A_m$ is invertible. If we multiply both sides of the linear equation $\sum_{i=1}^m A_ix_i = b$ by $A_m^{-1}$, we can reformulate 
the above problem into \eqref{c:problem_non}, which is ready to be solved by RapDual. 

The numerical experiments is performed on some randomly generated data sets of size $m=1001$, $n=100$. 
The first $1000$ coefficient matrices  $A_i$ , $1\leq i\leq 1000$, are of size $100\times 1$ and the last block $A_{1001}$ is an $100\times 100$ identity matrix.  $A_i$, $1\leq i\leq 1000$ are sparse matrices with sparsity level $0.1$, and all nonzero elements of $A_i$ and $200$ uniformly chosen  components of $\hat x$   are i.i.d from $N(0,1)$. The remaining variables of $\hat x$ are set to $0$ and $b = \sum_{i=1}^m A_i\hat x_i$. 
The parameters used in $p_{\lbd,\g,\e}$ 
are exactly the same as the first problem, i.e., $\e = \num{e-3}$, $\lbd = 2$, $\g = 4$.  From  Figure \ref{fig 4}, we can conclude that the randomized version 
converges faster than its batch counterpart, in terms of the number of primal block updates required to reduce the objective value and 
infeasibility. We also compare our randomized algorithm  with Algorithm 4 in \cite{hong2016convergence}, with $\rho = L^2, L^2/10, L^2/20$. Note  $\rho = L^2$ only guarantees asymptotic  convergence.
The results in  Figure \ref{fig 7} show that our algorithm can reduce the objective value faster than ADMM. As for the feasibility, both algorithms yield solutions that have quite tiny constraint violation.
It is worth noting that the converge of ADMM can very much depend on the update order ~\cite{ChenHeYeYuan13-1,SunLuoYe15-1} and also that
RapDual requires the computation of $A_m^{-1}$ in its current form.
Similar to RapGrad, if we reduce the number of inner iterations $s$ per subproblem  by a factor of $10$ or $20$, we obtain results in Figure \ref{fig 5} and Figure \ref{fig 6}. 
As we can see, the total number of primal block updates needed to yield a good solution, in terms of both objective value and feasibility, can be much smaller when inner loops are terminated early. 

\begin{figure}[H]
\centering
\begin{minipage}[t]{0.4\linewidth}
\centering
\includegraphics[width=\textwidth]{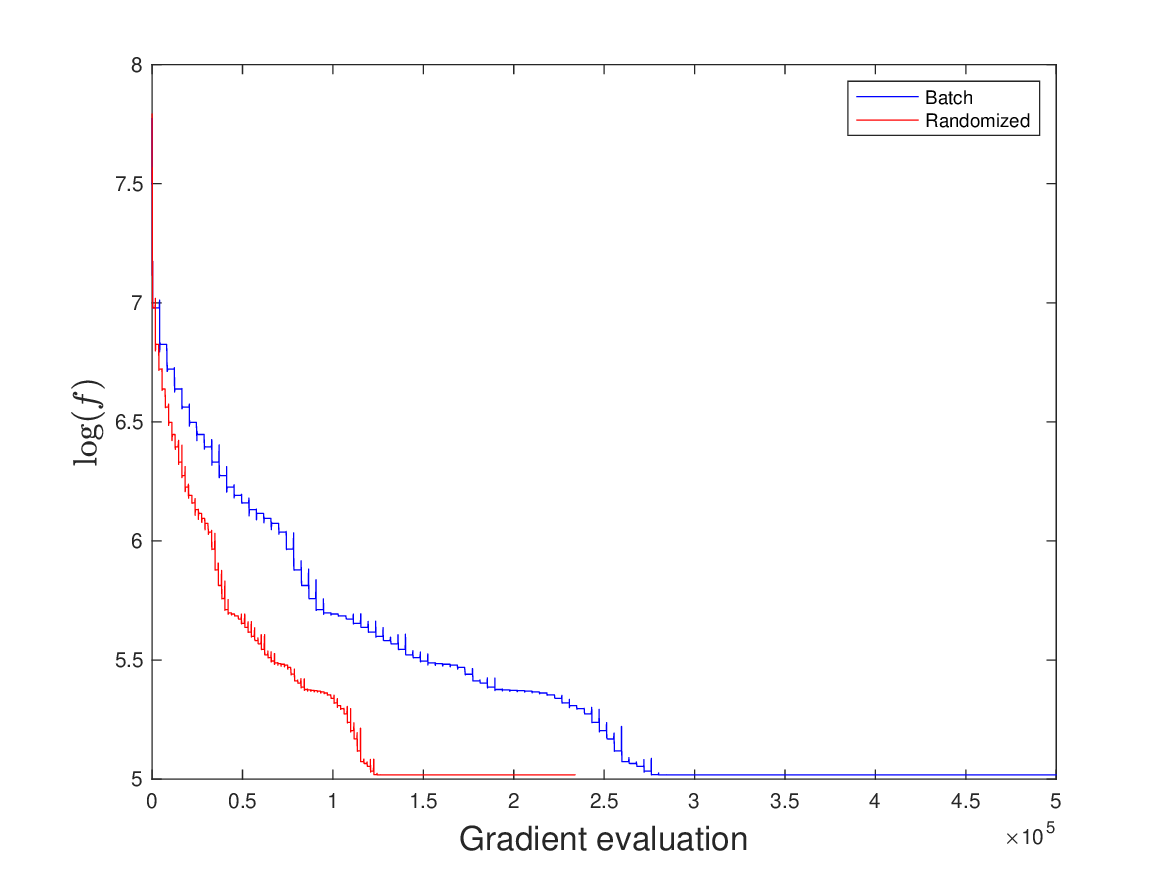}
\end{minipage}
\begin{minipage}[t]{0.4\linewidth}
\centering
\includegraphics[width=\textwidth]{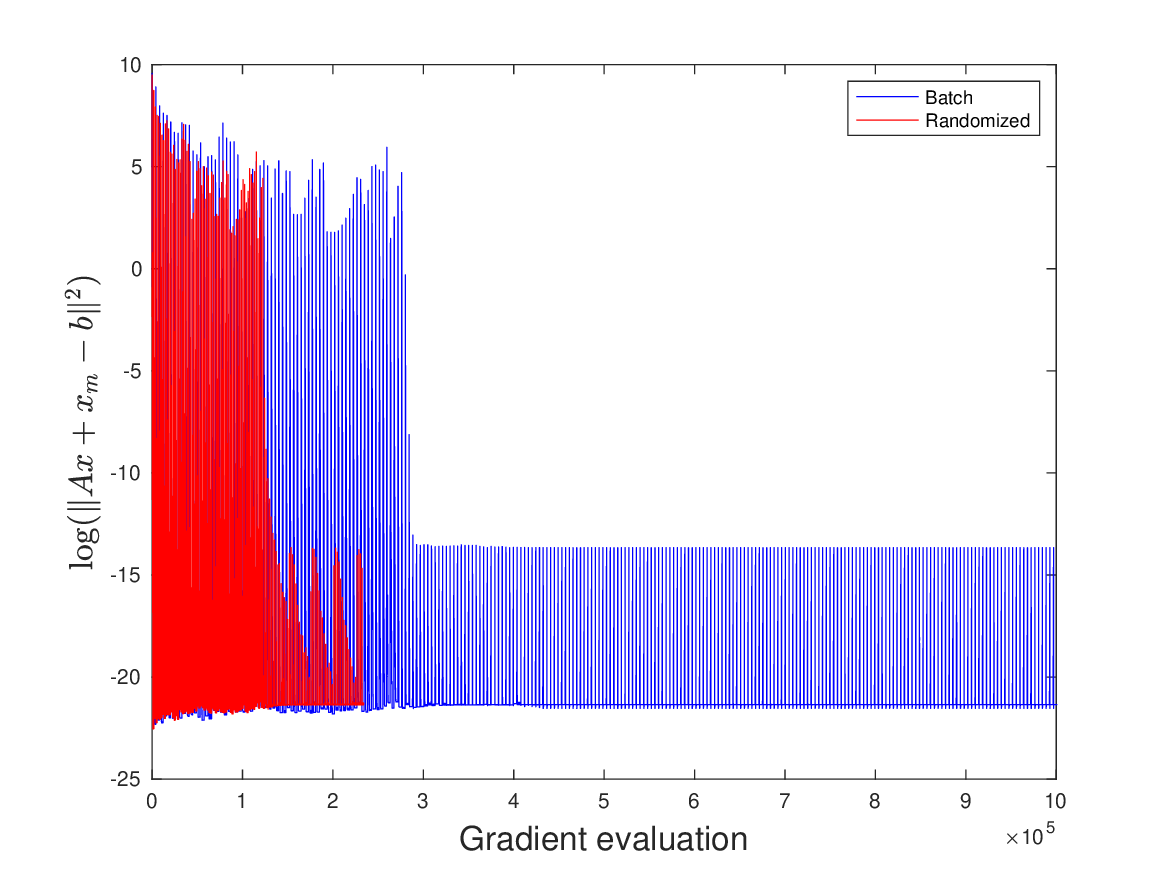}
\end{minipage}
\caption{Batch and randomized versions of Algorithm \ref{calg_non} on  compressed sensing problem with smoothed SCAD objective. Left Figure: Comparison on objective value $f$. Right Figure: Comparison on feasibility $\|\bfA\Bx+x_m-\bfb \|^2$.}\label{fig 4}
\end{figure}

\begin{figure}[H]   
\centering
\begin{minipage}[t]{0.4\linewidth}
\centering 
\includegraphics[width=\textwidth]{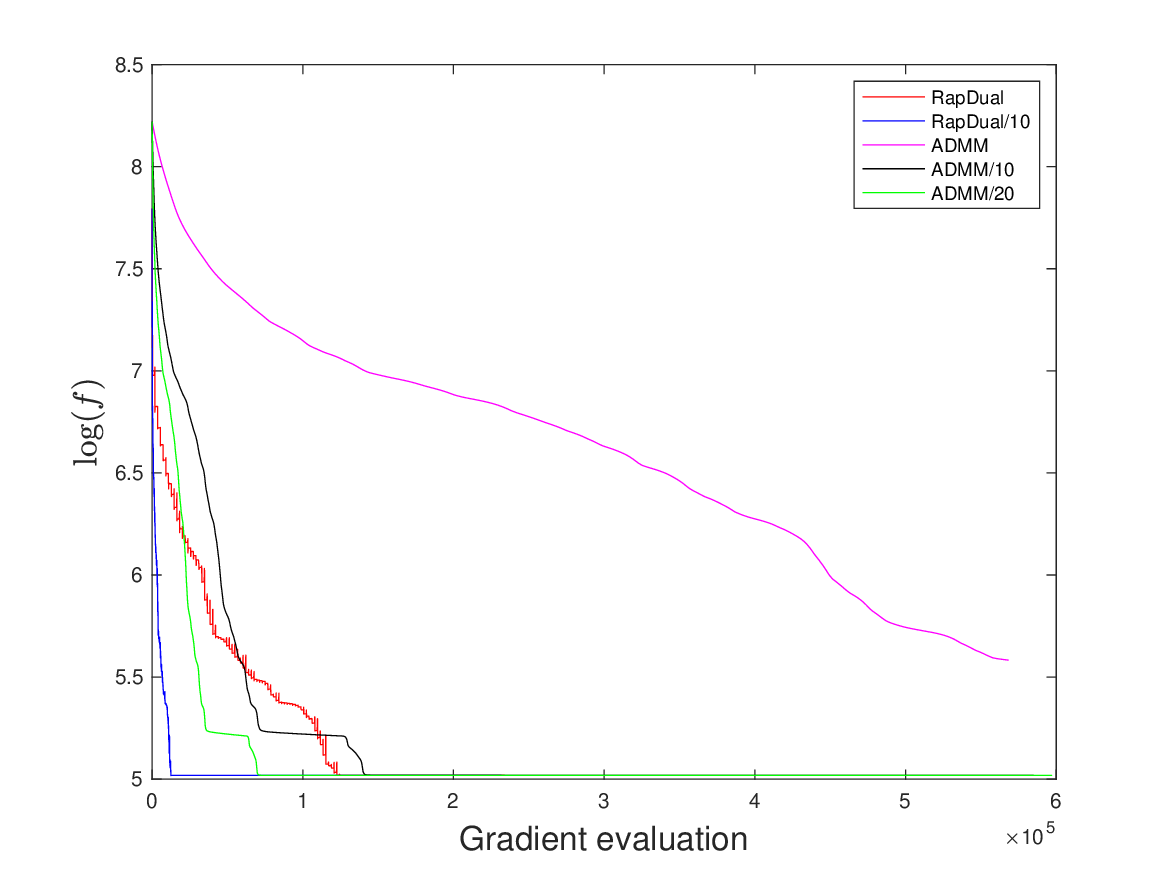}   
\end{minipage}
\begin{minipage}[t]{0.4\linewidth}
\centering
\includegraphics[width=\textwidth]{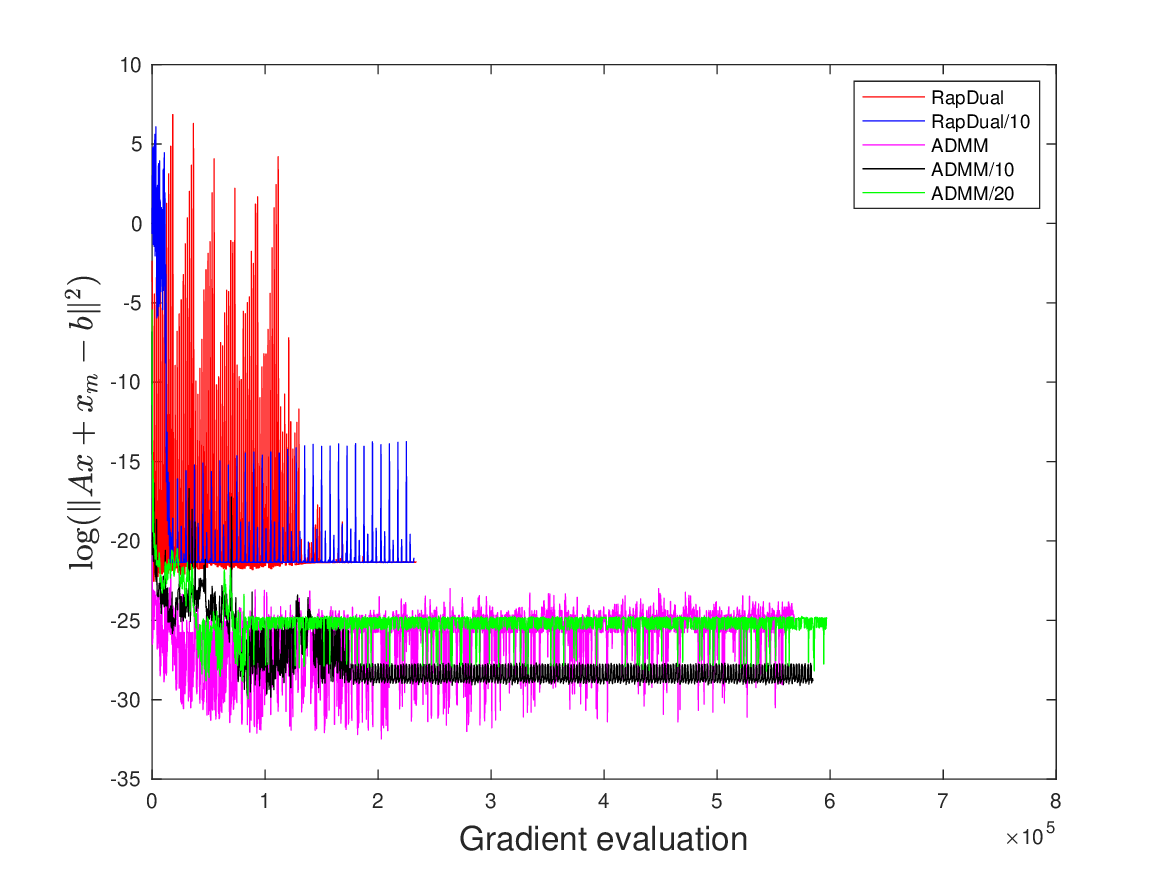}
\end{minipage}
\caption{Comparison on objective value $f$ and feasibility $\|\bfA\Bx+x_m-\bfb \|^2$ of   RapDual and ADMM  in \cite{hong2016convergence}. Left Figure: Comparison on objective value $f$ . Right Figure: Comparison on feasibility $\|\bfA\Bx+x_m-\bfb \|^2$ .}
\label{fig 7}
\end{figure}

\begin{figure}[H]
\centering  
\begin{minipage}[t]{0.4\linewidth}
\centering
\includegraphics[width=\textwidth]{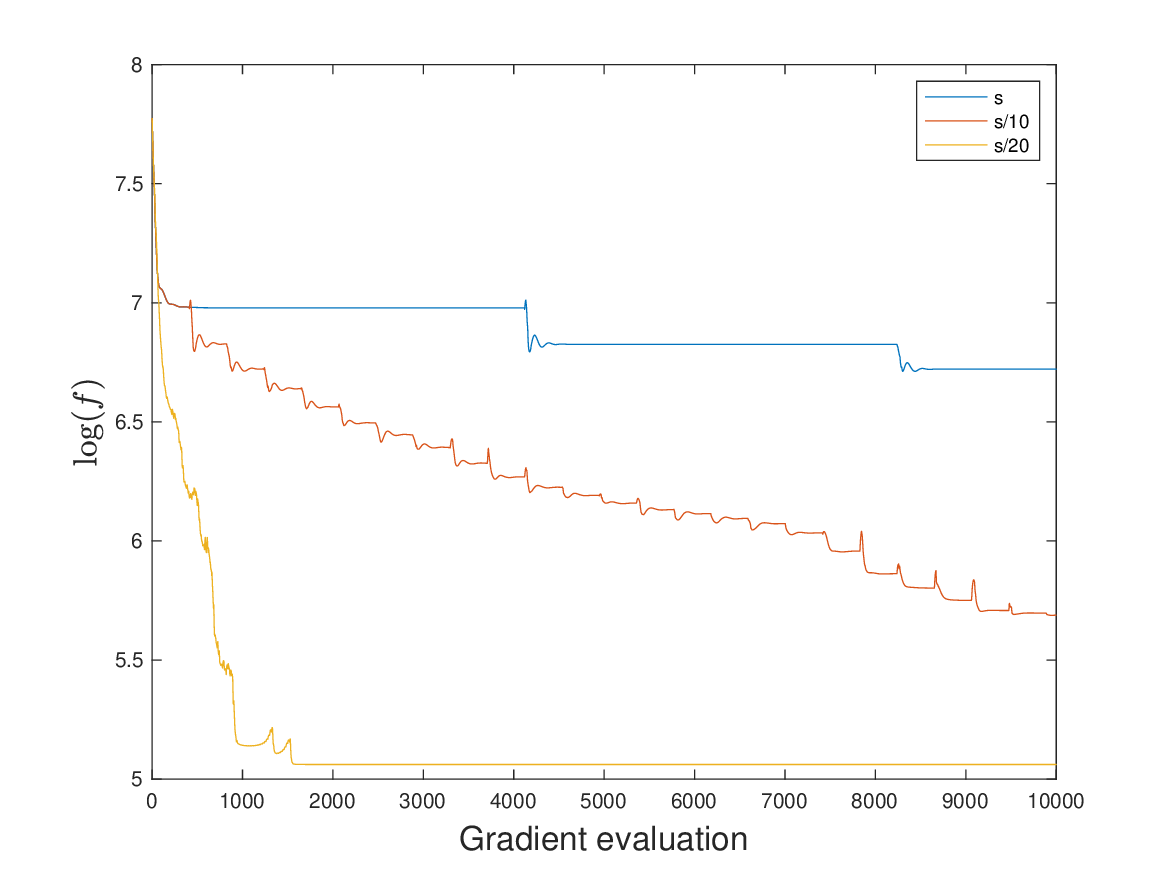}
\end{minipage}
\begin{minipage}[t]{0.4\linewidth}
\centering
\includegraphics[width=\textwidth]{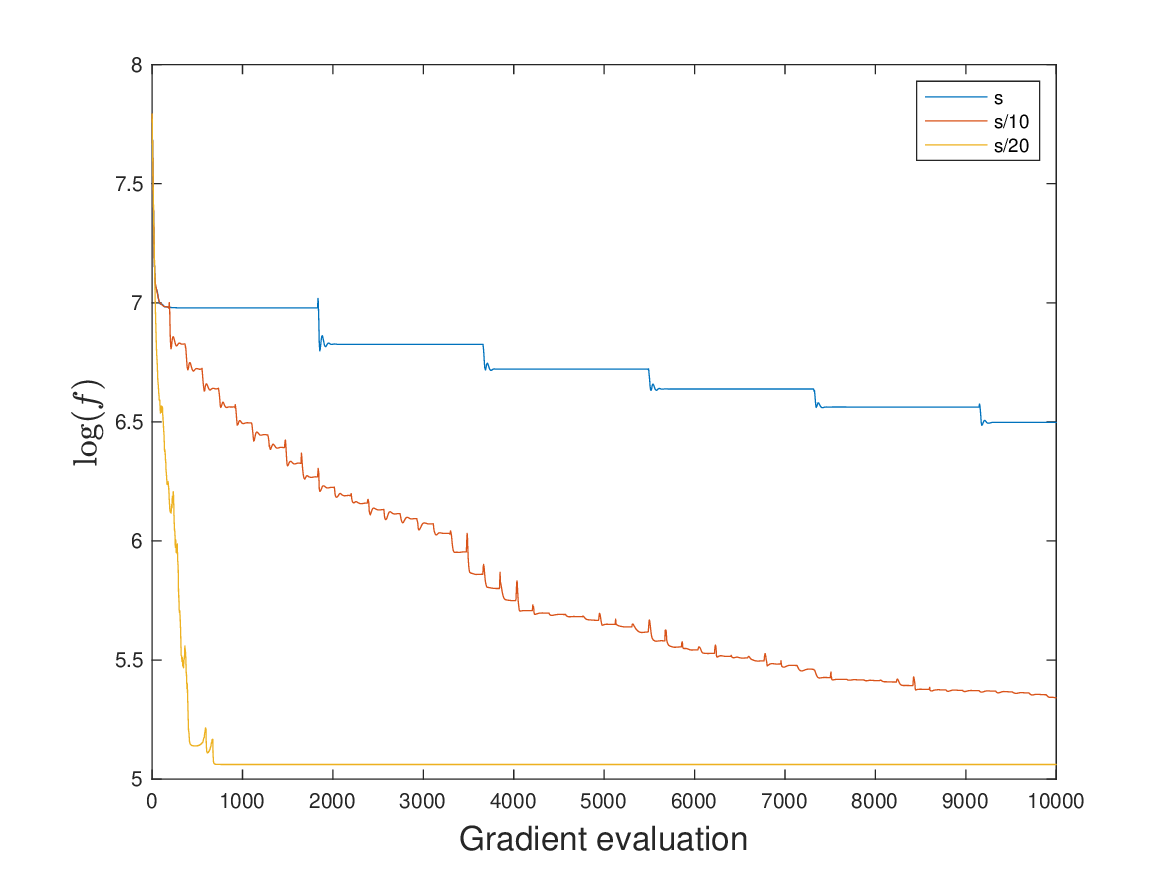}
\end{minipage}
\caption{Comparison on objective value $f$ of batch and randomized versions of RapDual when numbers of inner iterations are $s$, $s/10$, and $s/20$, respectively. Left Figure: Batch version. Right Figure: Randomized version.}
\label{fig 5} 
\end{figure}

\begin{figure}[H]   
\centering
\begin{minipage}[t]{0.4\linewidth}
\centering 
\includegraphics[width=\textwidth]{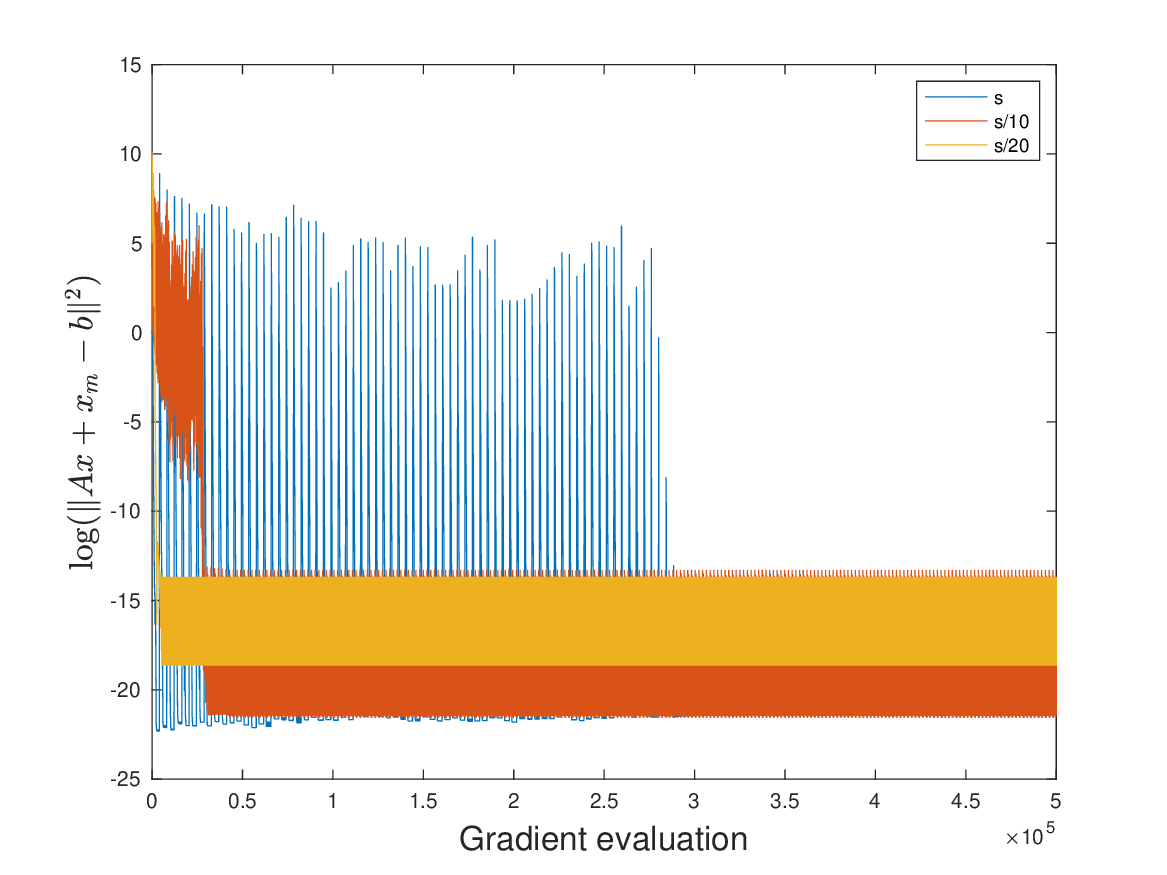}   
\end{minipage}
\begin{minipage}[t]{0.4\linewidth}
\centering
\includegraphics[width=\textwidth]{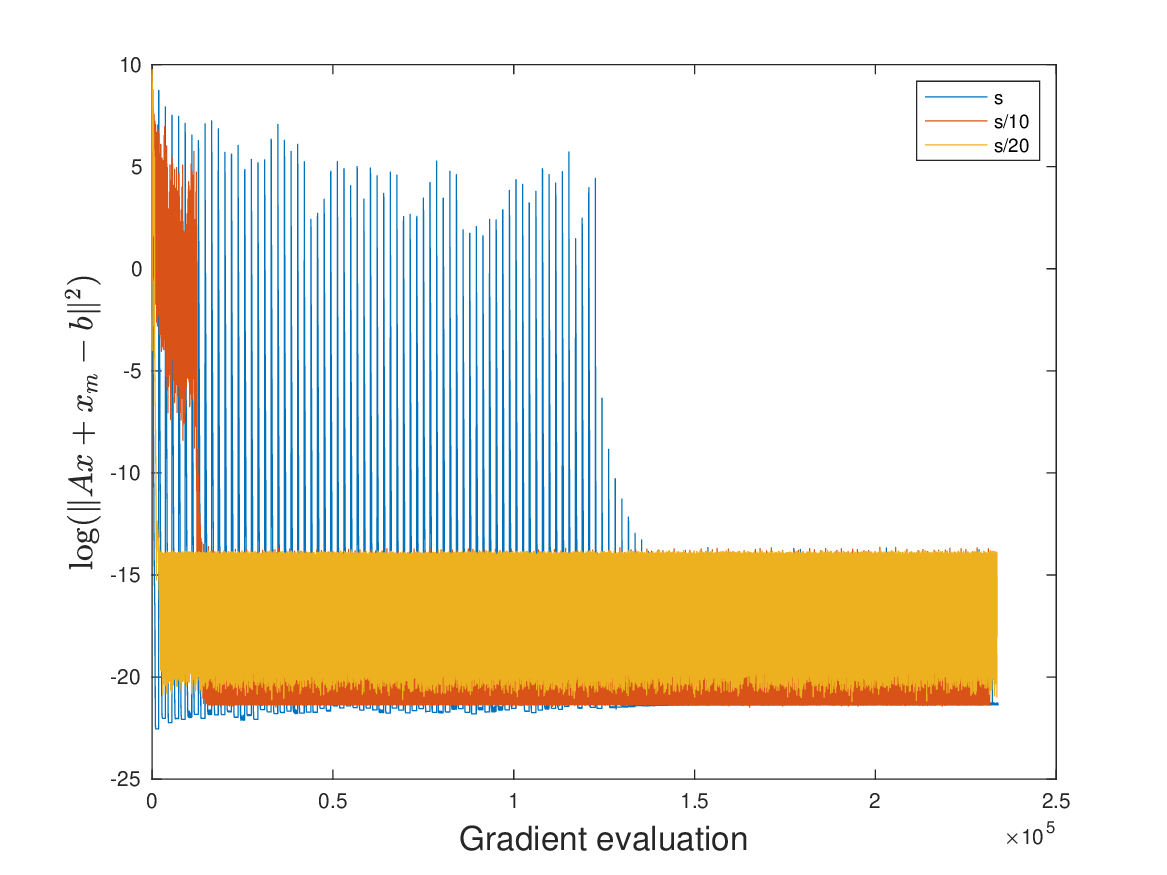}
\end{minipage}
\caption{Comparison on feasibility $\|\bfA\Bx+x_m-\bfb \|^2$ of batch and randomized versions of RapDual when numbers of inner iterations are $s$, $s/10$, and $s/20$, respectively. Left Figure: Batch version. Right Figure: Randomized version.}
\label{fig 6}
\end{figure}

\section{Concluding remarks}\label{conclusion}
In this paper, we propose a new randomized accelerated proximal-gradient (RapGrad) method for solving nonconvex finite-sum problems \eqref{problem_non} and a new randomized primal-dual gradient (RapGrad) method for  nonconvex multi-block problems \eqref{c:problem_non}, respectively. We demonstrate that for problem \eqref{problem_non} with large condition number, our RapGrad has much better convergence rate, in terms of dependence on the large number $m$, than the state-of-art nonconvex SVRG or SAGA, as well as accelerated gradient method
for nonconvex optimization. Moreover, we show that our RapDual method incorporated with randomization techniques can significantly save the number of primal block updates up to a factor of $\sqrt{m}$ than the deterministic batch methods for solving  problems \eqref{c:problem_non}. The potential advantages of RapGrad and RapDual  are also demonstrated through our preliminary numerical experiments.

We observe that the main ideas of this RapDual, i.e., using proximal point method to tranform the nonconvex multi-block problem into a series of convex subproblems, and
using randomized dual method to them, can be applied for solving much more general multi-block optimization problems for which there does not exist an invertible block. In this more general case,
the saddle-point subproblems will only be strongly convex in the primal space, but not in the dual space. 
Therefore, the complexity of solving the subproblem will only be sublinear, and as a consequence, the overall complexity will be much worse
than $\O(1/\epsilon)$. It will be interesting to study how much benefit one can obtain by using randomized algorithms in this more general case.
We leave this as an interesting topic for future research.
It is also worth noting that the proposed RapGrad and RapDual implicitly assume that the parameter $\mu$ is known, or a (tight) upper bound of it can be obtained.
While the values of $\mu$ for the problems considered 
in our numerical experiments can be tightly estimated, it will be interesting to see if we can adaptively estimate the value of $\mu$ in these algorithms
applied to solve more general problems.

\bibliographystyle{siam} 
\bibliography{Nonconvex.bib}

\newpage
\appendix
\section{Proof of Lemma \ref{lemma_dist}}\label{pf_lemma2}
\begin{proof} 
By convexity of $\psi$ and optimality of $x^*$, we have 
\begin{align}\label{def_Q}
~Q_t:&=~\varphi(x^t)+\psi(x^*)+\la \n \psi(x^*), x^t-x^*\ra-\left[\varphi(x^*)+\sm (\psi_i(\hat{\ux}^t_i)+\la \n \psi_i(\hat{\ux}^t_i), x^*-\hat{\ux}^t_i\ra)\right]\nn \\
&\geq ~\varphi(x^t)+\psi(x^*)+\la \n \psi(x^*), x^t-x^*\ra-\left[\varphi(x^*)+\psi(x^*)\right]\nn \\
&=~\varphi(x^t)-\varphi(x^*)+\la \n \psi(x^*), x^t-x^*\ra\geq \la \n \varphi(x^*)+\n \psi(x^*),x^t-x^*\ra\geq 0.
\end{align}
For notation convenience, let $\Psi(x,z) :=\psi(x)-\psi(z)-\la \n\psi(z), x-z\ra$,
\begin{align} 
Q_t &= \varphi(x^t)-\varphi(x^*)+\la \sm \tilde y_i^t, x^t-x^*\ra+\d^t_1+\d^t_2,\label{Q_equal}\\
\d^t_1&:= \psi(x^*)-\la \n\psi(x^*),x^*\ra-\sm [\psi_i(\hat{\ux}^t_i)-\la \n \psi_i(\hat{\ux}^t_i), \hat{\ux}^t_i\ra +\la \n \psi_i(\hat{\ux}^t_i)-\n \psi_i(x^*), \tilde x^t\ra]\nn\\
&~=\sm[\tau_t \Psi(\ux^{t-1}_i,x^*) -(1+\tau_t)\Psi(\hat{\ux}^t_i,x^*) -
\tau_t\Psi({\ux}^{t-1}_i,\hat{\ux}^t_i)],\label{delta1}\\
\d^t_2&:=\sm \left[ \la \n \psi_i(\hat{\ux}^t_i)-\n \psi_i(x^*),\tilde x^t\ra  -\la \tilde y_i^t-\n \psi_i(x^*), x^t\ra  +\la \tilde y_i^t-\n \psi_i(\hat{\ux}^t_i), x^* \ra  \right].\nn
\end{align}
In view of \eqref{eqn:full2}, we have
\begin{align}\label{delta2}
\Eb_{i_t}\d^t_2&=\sm\Eb_{i_t}\left[  \la \n \psi_i(\hat{\ux}^t_i)-\n \psi_i(x^*),\tilde x^t\ra  -\la \tilde y_i^t-\n \psi_i(x^*), x^t\ra  +\la \tilde y_i^t-\n \psi_i(\hat{\ux}^t_i),x^* \ra  \right]\nn\\
&=\sm \Eb_{i_t}\la \tilde y_i^t-\n \psi_i(x^*), \tilde x^t- x^t\ra  .
\end{align}
Multiplying each $Q_t$ by a non-negative $\g_t$ and summing them up, we obtain
\begin{align}\label{relation_Q}
\Eb_s\left[\sk \g_tQ_t\right] \leq &\Eb_s\left\{\sk \g_t[\eta_t V_\varphi(x^*,x^{t-1})-(1+\eta_t)V_\varphi(x^*,x^t)-\eta_t V_\varphi(x^t,x^{t-1})]\right\}\nn\\
& ~~+\Eb_s \left\{\sk\smn\left[\g_t(1+\tau_t-\tfrac{1}{m})  \Psi({\ux}^{t-1}_i,x^*)-\g_t (1+\tau_t)\Psi({\ux}^{t}_i,x^*)\right]\right\}\nn\\
& ~~+\Eb_s\left\{\sk\smn  \g_t \left[ \tfrac{1}{m}\la \tilde y^t_i-\n \psi_i(x^*),x^t-x^t\ra -  \tau_t \Psi({\ux}^{t-1}_i,{\ux}^{t}_i))\right]\right\}\nn\\
&\leq \Eb_s\left[\g_1\eta_1 V_\varphi(x^*,x^{0})-(1+\eta_s)V_\varphi(x^*,x^s)\right]\nn\\
& ~~+\Eb_s\left\{\smn[\g_1(1+\tau_1-\tfrac{1}{m})\Psi({\ux}^{0}_i,x^*)-\g_s (1+\tau_s)\Psi({\ux}^{s}_i,x^*)]\right\}-\Eb_s[\sk \g_t\d_t],
\end{align}
where 
\begin{align}\label{def_Delta}
\d_t:&=\eta_t V_\varphi(x^t,x^{t-1})-\smn  \left[\tfrac{1}{m}\la \tilde y^t_i-\n \psi_i(x^*),\tilde x^t-x^t\ra-\tau_t\Psi({\ux}^{t-1}_i,{\ux}^{t}_i)\right]\nn\\
&=\eta_t V_\varphi(x^t,x^{t-1})-\sm  \la \tilde y^t_i-\n \psi_i(x^*),\tilde x^t-x^t\ra+\tau_t\Psi({\ux}^{t-1}_{i_t},{\ux}^{t}_{i_t}),
\end{align}
the first inequality follows from \eqref{Q_equal}, \eqref{delta1}, \eqref{opt_x}, \eqref{delta2} and Lemma \ref{lemma_full}, and the second inequality is implied by \eqref{eqn:ss2} and \eqref{eqn:ss3}.

By the definition of $\tilde x$ in \eqref{def_tx}, we have
\begin{align}\label{Delta_r1}
&\sm\la \tilde y^t_i-\n \psi_i(x^*), \tilde x^t-x^t\ra \nn\\
&=\sm[\la \tilde y^t_i-\n \psi_i(x^*), x^{t-1}-x^t\ra-\a_t\la \tilde y^t_i-\n \psi_i(x^*), x^{t-2}-x^{t-1}\ra]\nn\\
&=\sm[\la \tilde y^t_i-\n \psi_i(x^*), x^{t-1}-x^t\ra-\a_t\la \tilde y^{t-1}_i-\n \psi_i(x^*), x^{t-2}-x^{t-1}\ra-\a_t\la \tilde y^{t}_i-\tilde y^{t-1}_i,  x^{t-2}-x^{t-1}\ra]\nn\\
&=\sm[\la\tilde y^t_i-\n \psi_i(x^*),  x^{t-1}-x^t\ra-\a_t\la \tilde y^{t-1}_i-\n \psi_i(x^*), x^{t-2}-x^{t-1}\ra]-\a_t\la\n \psi_{i_t}(\ux_{i_t}^t)
- \n \psi_{i_t}(\ux_{i_t}^{t-1}),  x^{t-2}-x^{t-1}\ra\nn\\
&-\tsy{(1-\tfrac{1}{m})\a_t\la \n \psi_{i_{t-1}}(\ux_{i_{t-1}}^{t-2})-\n \psi_{i_{t-1}}(\ux_{i_{t-1}}^{t-1}), x^{t-2}-x^{t-1}\ra}.
\end{align} 
 From the relation \eqref{eqn:ss5} and the fact $x^{-1}=x^0$, we have
 \begin{align}\label{Delta_r2}
&\sk\g_t\sm[\la \tilde y^t_i-\n \psi_i(x^*), x^{t-1}-x^t\ra-\a_t\la \tilde y^{t-1}_i-\n \psi_i(x^*), x^{t-2}-x^{t-1}\ra]\nn\\
&=\g_s\sm\la \tilde y^s_i-\n \psi_i(x^*), x^{s-1}-x^s\ra\nn\\
&=\g_s\sm\la \n \psi_{i}(\ux_{i}^s) -\n \psi_i(x^*), x^{s-1}-x^s\ra+\g_s\smn(1-\tfrac{1}{m})\la \n \psi_{i}(\ux_{i}^s) -\n \psi_{i}(\ux_{i}^{s-1}), x^{s-1}-x^s \ra\nn\\
&=\g_s\sm\la\n \psi_{i}(\ux_{i}^s) -\n \psi_i(x^*),  x^{s-1}-x^s\ra+\g_s\left(1-\tfrac{1}{m}\right)\la  \n \psi_{i_s}(\ux_{i_s}^s)- \n \psi_{i_s}(\ux_{i_s}^{s-1}), x^{s-1}-x^s\ra.
\end{align}
Now we are ready to bound the last term in \eqref{relation_Q} as follows:
\begin{align}
\sum_{t=1}^s \g_t \d_t \myeqa &~ \sk\g_t[\eta_t V_\varphi(x^t,x^{t-1})-\sm  \la \tilde y^t_i-\n \psi_i(x^*),\tilde x^t-x^t\ra+\tau_t\Psi({\ux}^{t-1}_{i_t},{\ux}^{t}_{i_t})]\nn\\
\myeqb &~\sk\g_t\Big[\eta_t V_\varphi(x^t,x^{t-1})+\a_t\la \n \psi_{i_t}(\ux_{i_t}^t)- \n \psi_{i_t}(\ux_{i_t}^{t-1}),x^{t-2}-x^{t-1}\ra\nn\\
&\quad \quad \quad \quad+\tsy{(1-\tfrac{1}{m})}\a_t\la \n \psi_{i_{t-1}}(\ux_{i_{t-1}}^{t-2})-\n \psi_{i_{t-1}}(\ux_{i_{t-1}}^{t-1})\ra,x^{t-2}-x^{t-1}+\tau_t\Psi({\ux}^{t-1}_{i_t},{\ux}^{t}_{i_t})\Big]\nn\\ 
&+\g_s\sm\la \n \psi_{i}(\ux_{i}^{s})-\n \psi_i(x^*), x^{s-1}-x^s\ra-\g_s\tsy{(1-\tfrac{1}{m})}\la \n \psi_{i_s}(\ux_{i_s}^s)- \n \psi_{i_s}(\ux_{i_s}^{s-1}),x^{s-1}-x^s\ra\nn\\
\mygeqc &~\sk\g_t\Big[\tfrac{\eta_tr}{2} \|x^t-x^{t-1}\|^2+\a_t\la \n \psi_{i_t}(\ux_{i_t}^t)- \n \psi_{i_t}(\ux_{i_t}^{t-1}), x^{t-2}-x^{t-1}\ra\nn\\
&~+\tsy{(1-\tfrac{1}{m})}\a_t\la \n \psi_{i_{t-1}}(\ux_{i_{t-1}}^{t-2})-\n \psi_{i_{t-1}}(\ux_{i_{t-1}}^{t-1}), x^{t-2}-x^{t-1}\ra+\tfrac{\tau_t}{2\hat L}\|\n \psi_{i_t}({\ux}^{t}_{i_t})-\n \psi_{i_t}({\ux}^{t-1}_{i_t})\|^2\Big]\nn\\
&~-\tfrac{\g_s}{m}\smn\la x^{s-1}-x^s, \n \psi_{i}(\ux_{i}^{s})-\n \psi_i(x^*)\ra-\g_s\left(1-\tfrac{1}{m}\right)\la x^{s-1}-x^s, \n \psi_{i_s}(\ux_{i_s}^s)- \n \psi_{i_s}(\ux_{i_s}^{s-1})\ra,\nn\\\label{break}
\end{align}
where (a) follows from the definition $\d_t$ in \eqref{def_Delta}, (b) follows relations \eqref{Delta_r1} and \eqref{Delta_r2} and 
(c) follows from the fact that 
$V_\varphi(x^t,x^{t-1})\geq \tfrac{r}{2}\|x^{t}-x^{t-1}\|^2$,  $
\Psi({\ux}^{t-1}_{i_t},{\ux}^{t}_{i_t})\geq \tfrac{1}{2\hat L}\|\n \psi_{i_t}({\ux}^{t-1}_{i_t})-\n \psi_{i_t}({\ux}^{t}_{i_t})\|^2$.

By properly regrouping the terms on the right hand side of \eqref{break}, we have
\begin{align*}
\tsum_{t=1}^s \g_t \d_t \geq &~\g_s\tsy{\left[\tfrac{\eta_s r}{4}\|x^s-x^{s-1}\|^2-\sm\la \n \psi_{i}(\ux_{i}^s)-\n \psi_i(x^*), x^{s-1}-x^s\ra\right]}\\
&~+\g_s\tsy{\big[\tfrac{\eta_s r}{4}\|x^s-x^{s-1}\|^2-(1-\tfrac{1}{m})\la \n \psi_{i_s}(\ux_{i_s}^{s})-\n \psi_{i_s}(\ux_{i_s}^{s-1}), x^{s-1}-x^s\ra}+\tsy{\tfrac{\tau_s}{4\hat L}\|\n\psi_{i_s}({\ux}^{s}_{i_s})-\n\psi_{i_s}({\ux}^{s-1}_{i_s})\|^2\big]}\\
&~+\skt \g_t\tsy{\left[\a_t \la \n \psi_{i_t}(\ux_{i_{t}}^{t})-\n \psi_{i_t}(\ux_{i_{t}}^{t-1}), x^{t-2}-x^{t-1},\ra+  \tfrac{\tau_t}{4\hat L}\|\n \psi_{i_t}({\ux}^{t}_{i_t})-\n \psi_{i_t}({\ux}^{t-1}_{i_t})\|^2\right]}\\
&~+\skt \tsy{\big[\g_t(1-\tfrac{1}{m})\a_t\la \n \psi_{i_{t-1}}(\ux_{i_{t-1}}^{t-1})-\n \psi_{i_{t-1}}(\ux_{i_{t-1}}^{t-2})}, x^{t-2}-x^{t-1}\ra\\
&\quad\quad\quad\quad+\tsy{\tfrac{\tau_{t-1}\g_{t-1}}{4\hat L}\|\n \psi_{i_{t-1}}({\ux}^{t-1}_{i_{t-1}})-\n \psi_{i_{t-1}}({\ux}^{t-2}_{i_{t-1}})\|^2\big]}+\skt \tfrac{\g_{t-1}\eta_{t-1} r}{2}\|x^{t-1}-x^{t-2}\|^2 \\
\mygeqa &~\g_s\tsy{\left[\tfrac{\eta_s r}{4}\|x^s-x^{s-1}\|^2-\sm\la \n \psi_{i}(\ux_{i}^s)-\n \psi_i(x^*),  x^{s-1}-x^s\ra\right]}\\
&~+\g_s\tsy{\left(\tfrac{\eta_sr}{4}-\tfrac{(m-1)^2\hat L}{m^2\tau_s}\right)\|x^s-x^{s-1}\|^2}+\skt\tsy{\left(\tfrac{\g_{t-1}\eta_{t-1}r}{2}-\tfrac{\g_t\a_t^2\hat L}{\tau_t}-\tfrac{(m-1)^2\g_t^2\a_t^2\hat L}{m^2\g_{t-1}\tau_{t-1}}\right)\|x^s-x^{s-1}\|^2}\\
\mygeqb &~\g_s\tsy{\left[\tfrac{\eta_s r}{4}\|x^s-x^{s-1}\|^2-\sm\la \n \psi_{i}(\ux_{i}^s)-\n \psi_i(x^*), x^{s-1}-x^s\ra\right]},
\end{align*}
where (a) follows from the simple relation that $b\la u, v\ra-a\|v\|^2/2 \le b^2\|u\|^2/(2a)$, $\forall a >0$ and  (b) follows from \eqref{eqn:ss1}, \eqref{eqn:ss4} and \eqref{eqn:ss5}. By using the above inequality, \eqref{def_Q} and \eqref{relation_Q}, we obtain
\begin{align}\label{final_Q}
	0\leq & ~\Eb_s\left[\g_1\eta_1 V_\varphi(x^*,x^{0})-\g_s(1+\eta_s)V_\varphi(x^*,x^s)\right]+\g_s\Eb_s \left[\sm\la \n \psi_{i}(\ux_{i}^s)-\n \psi_i(x^*), x^{s-1}-x^s\ra \right]\nn\\
&-\tsy{\tfrac{\g_s\eta_s r}{4}\Eb_s\|x^s-x^{s-1}\|^2}+\Eb_s\left\{\smn[\g_1(1+\tau_1-\tfrac{1}{m})\Psi({\ux}^{0}_i,x^*)-\g_s (1+\tau_s)\Psi({\ux}^{s}_i,x^*)]\right\}\nn\\
\myleqa &~\Eb_s[\g_1\eta_1 V_\varphi(x^*,x^{0})-\g_s(1+\eta_s)V_\varphi(x^*,x^s)]-\tsy{\tfrac{\g_s\eta_s r}{4}\Eb_s\|x^s-x^{s-1}\|^2}\nn\\
&+\Eb_s\left\{\smn[\g_1(1+\tau_1-\tfrac{1}{m})\Psi({\ux}^{0}_i,x^*)-\tfrac{\g_s (1+\tau_s)}{2}\Psi({\ux}^{s}_i,x^*)]\right\}\nn\\
&-\g_s\sm \Eb_s \left[\tfrac{m(1+\tau_s)}{4\hat L}\|\n \psi_i({\ux}^{s}_i)-\n \psi_i(x^*) \|^2-\la \n \psi_{i}(\ux_{i}^s)-\n \psi_i(x^*), x^{s-1}-x^s\ra\right]\nn\\
\myleqb &~\Eb_s[\g_1\eta_1 V_\varphi(x^*,x^{0})-\g_s(1+\eta_s)V_\varphi(x^*,x^s)]-\g_s\left[\tfrac{\eta_s r}{4}-\tfrac{\hat L}{m(1+\tau_s)}\right]\Eb_s\|x^s-x^{s-1}\|^2\nn\\
&+\smn\Eb_s[\g_1(1+\tau_1-\tfrac{1}{m})\Psi({\ux}^{0}_i,x^*)-\tfrac{\g_s (1+\tau_s)}{2}\Psi({\ux}^{s}_i,x^*)]\nn\\
\myleqc &~\Eb_s\left[\g_1\eta_1 V_\varphi(x^*,x^{0})-\g_s(1+\eta_s)V_\varphi(x^*,x^s)\right]\nn\\
&+\tfrac{\g_1[(1+\tau_1)-\tfrac1m]\hat L}{2}\smn\Eb_s \|{\ux}^{0}_i-x^*\|^2  -\tfrac{\mu\g_s (1+\tau_s)}{4}\smn\Eb_s \|{\ux}^{s}_i-x^*\|^2,
\end{align}
where (a) follows from $
\Psi({\ux}^{0}_i,x^*)\geq \tfrac{1}{2\hat L}\|\n \psi_{i_t}({\ux}^{0}_i)-\n \psi_{i_t}(x^*)\|^2$;
(b) follows from the simple relation that $b\la u, v\ra-a\|v\|^2/2 \le b^2\|u\|^2/(2a)$, $\forall a >0$ and  
(c) follows from \eqref{eqn:ss6}, strong convexity of $\psi_i$ and Lipschitz continuity of $\n\psi_i$. This completes the proof.
\end{proof}

\section{Proof of Lemma \ref{lemma 5}}\label{pf_lemma5}
\begin{proof}
For any $t\geq 1$, since $(\Bx^*, y^*)$ is a saddle point of \eqref{eqn:saddle}, we have 
\begin{align*}
\psi(\hat{\Bx}^t) - \psi(\Bx^*) + \la \bfA\hat{\Bx}^t-\bfb ,y^*\ra - \la \bfA\Bx^*-\bfb , y^t\ra+  h(y^*)-  h(y^t)\geq 0.
\end{align*}
For nonnegative $\g_t$, we further obtain
\begin{align}\label{eqn:Q1}
\Eb_s\left\{\sk \g_t \left[\psi(\hat{\Bx}^t) - \psi(\Bx^*) + \la \bfA\hat{\Bx}^t-\bfb ,y^*\ra - \la \bfA\Bx^*-\bfb , y^t\ra+  h(y^*)-  h(y^t)\right]\right\}\geq 0.
\end{align}
According to optimality conditions of \eqref{c:xfull} and \eqref{c:algo2} respectively, and strongly convexity of $\psi$ and  $  h$ we obtain
\begin{align*}
\psi(\hat{\Bx}^t) - \psi(\Bx^*) + \tfrac{\mu}{2}\|\Bx^*-\hat{\Bx}^t\|^2 + \la \bfA^{\top}y^t, \hat{\Bx}^t-\Bx^*\ra
&\le \tfrac{\eta_t}{2}\left[\|\Bx^*-\Bx^{t-1}\|^2 - \|\Bx^*-\hat{\Bx}^t\|^2 - \|\hat{\Bx}^t-\Bx^{t-1}\|^2\right],\\
  h(y^t)-  h(y^*)+\la -\bfA\tilde{\Bx}^t+\bfb ,y^t-y^*\ra &\le \tau_t V_{  h}(y^*,y^{t-1}) - (\tau_t+1) V_{  h}(y^*,y^{t})- \tau_t V_{  h}(y^t,y^{t-1}).
\end{align*}
Combining the above two inequalities with relation \eqref{eqn:Q1}, we have
\begin{align*}
&~\Eb_s\tsy{\left\{\sum_{t=1}^k\left[ \tfrac{\g_t\eta_t}{2}\|\Bx^*-\Bx^{t-1}\|^2 - \tfrac{\g_t(\eta_t+\mu)}{2}\|\Bx^*-\hat{\Bx}^t\|^2 - \tfrac{\g_t\eta_t}{2}\|\hat{\Bx}^t-\Bx^{t-1}\|^2\right]\right\}}\\
&~+ \Eb_s\tsy{\left\{\sk \g_t\left[\tau_t V_{  h}(y^*,y^{t-1})- (\tau_t+1) V_{  h}(y^*,y^{t})- {\tau_t}  V_{  h}(y^t,y^{t-1})\right]\right\}}\\
&~+ \Eb_s\left[\sk \g_t\la \bfA(\hat{\Bx}^t-\tilde{\Bx}^t),y^*-y^t\ra\right]\geq 0.
\end{align*}
Observe that for $t \ge 1$, 
\begin{align*}
\Eb_{i_t} \left\{\la \bfA(\hat{\Bx}^t-\tilde{\Bx}^t),y^*\ra \right\}
= \Eb_{i_t} \left\{\la \bfA((m-1)\Bx^t - (m-2)\Bx^{t-1}-\tilde{\Bx}^t),y^*\ra \right\}.
\end{align*}
Applying this and the results \eqref{c:full2}, \eqref{c:full3} in Lemma \ref{clemma:full}, we further have
\begin{align}\label{eqn:Q2}
0\leq &~\Eb_s\tsy{\left\{\sk \left[\tfrac{\g_t((m-1)\eta_t+(m-2)\mu)}{2}\|\Bx^*-\Bx^{t-1}\|^2 - \tfrac{(m-1)\g_t(\eta_t+\mu)}{2}\|\Bx^*-\Bx^t\|^2\right]\right\}}\nn\\
&~+ \tsy{\Eb_s\left\{\sk \left[ \g_t\tau_t   V_{  h}(y^*,y^{t-1})- {\g_t(\tau_t+1)}   V_{  h}(y^*,y^{t})\right]\right\}
+ \Eb_s\left\{\sk \g_t \d_t\right\}}\nn\\
\le &~
\tsy{\Eb_s\left[\tfrac{\g_1((m-1)\eta_1+(m-2)\mu)}{2}\|\Bx^*-\Bx^0\|^2 - \tfrac{(m-1)\g_s(\eta_s+\mu)}{2}\|\Bx^*-\Bx^s\|^2\right]}\nn\\
&~+ \tsy{\Eb_s[\g_1\tau_1  V_{  h}(y^*,y^{0}) - \g_s(\tau_s+1)  V_{  h}(y^*,y^s)]
+ \Eb_s[\sk \g_t \d_t]},
\end{align}
where
\[\tsy{\d_t = -\tfrac{(m-1)\eta_t}{2}\|x_{i_t}^t-x_{i_t}^{t-1}\|^2 - \tau_t  V_{  h}(y^{t},y^{t-1})+ \la \bfA ((m-1)\Bx^t - (m-2)\Bx^{t-1}-\tilde{\Bx}^t),y^*-y^t\ra}.\]
and the second inequality follows from \eqref{c:ss3} and \eqref{c:ss4}.

By \eqref{c:ss1} and the definition of $\tilde{x}^t$ in \eqref{c:algo1} we have:
\begin{align}
\sk \g_t \d_t =& \tsy{\sk \left[-\tfrac{(m-1)\g_t\eta_t}{2}\|x_{i_t}^t-x_{i_t}^{t-1}\|^2 - \g_t\tau_t  V_{  h}(y^{t},y^{t-1})\right]}\nn\\
&~+\tsy{\sk \g_t (m-1) \la \bfA (\Bx^t - \Bx^{t-1}),y^*-y^t\ra
-\sk \g_t (m-1)\tilde{\a}_t \la \bfA (\Bx^{t-1} - \Bx^{t-2}),y^*-y^{t-1}\ra}\nn\\
&~-\tsy{\sk \g_t (m-1)\tilde{\a}_t \la \bfA (\Bx^{t-1} - \Bx^{t-2}),y^{t-1}-y^t\ra}\nn\\
=& \tsy{\sk \left[-\tfrac{(m-1)\g_t\eta_t}{2}\|x_{i_t}^t-x_{i_t}^{t-1}\|^2 - \g_t\tau_t  V_{  h}(y^{t},y^{t-1})\right]}+\g_s (m-1) \la \bfA (\Bx^s - \Bx^{s-1}),y^*-y^s\ra\nn\\
&~-\sk \g_t (m-1)\tilde{\a}_t \la \bfA (\Bx^{t-1} - \Bx^{t-2}),y^{t-1}-y^t\ra,\label{break2}
\end{align}
where the second equality follows from \eqref{c:ss2} and the fact that $x^0 = x^{-1}$.

Since $\la \bfA (\Bx^{t-1} - \Bx^{t-2}),y^{t-1}-y^t\ra
=\la \bfA _{t-1}(x_{i_{t-1}}^{t-1} - x_{i_{t-1}}^{t-2}),y^{t-1}-y^t\ra\leq   \|\bfA _{i_{t-1}}\|\|x_{i_{t-1}}^{t-1} - x_{i_{t-1}}^{t-2}\|\|y^t-y^{t-1}\|
$ and $V_{  h}(y^{t},y^{t-1})\geq \tfrac{\bar\mu}{2}\|y^t-y^{t-1}\|^2$, from \eqref{break2} we have 
\begin{align*}
\sk \g_t \d_t\leq 
& \tsy{\sk \left[-\tfrac{(m-1)\g_t\eta_t}{2}\|x_{i_t}^t-x_{i_t}^{t-1}\|^2 - \tfrac{g_t\tau_t\bar\mu}{2} \| y^{t},y^{t-1}\|^2\right]}+\g_s (m-1) \la \bfA (\Bx^s - \Bx^{s-1}),y^*-y^s\ra\nn\\
&~-\sk \g_t (m-1)\tilde{\a}_t \|\bfA _{i_{t-1}}\|\|x_{i_{t-1}}^{t-1} - x_{i_{t-1}}^{t-2}\|\|y^t-y^{t-1}\|\\
\myeqa&~\tsy{\g_s (m-1) \la \bfA (\Bx^s - \Bx^{s-1}),y^*-y^s\ra-\tfrac{(m-1)\g_s\eta_s}{2}\|x_{i_s}^s-x_{i_s}^{s-1}\|^2}\\
&~+\tsy{\skt \big[\g_t (m-1)\tilde{\a}_t \|\bfA _{i_{t-1}}\|\|x_{i_{t-1}}^{t-1} - x_{i_{t-1}}^{t-2}\|\|y^t-y^{t-1}\|}\\
&\quad\quad\quad\quad -\tsy{\tfrac{(m-1)\g_{t-1}\eta_{t-1}}{2}\|x_{i_{t-1}}^{t-1}-x_{i_{t-1}}^{t-2}\|^2 - \tfrac{\bar \mu\g_t\tau_t}{2} \|y^{t}-y^{t-1}\|^2\big]}\\
\myleqb &~\tsy{\g_s (m-1) \la \bfA (\Bx^s - \Bx^{s-1}),y^*-y^s\ra-\tfrac{(m-1)\g_s\eta_s}{2}\|x_{i_s}^s-x_{i_s}^{s-1}\|^2}\\
&~+\tsy{\skt \left(\tfrac{\g_t^2 (m-1)^2\tilde{\a}_t^2 \bar{A}^2}{2(m-1)\g_{t-1}\eta_{t-1}}- \tfrac{\bar\mu\g_t\tau_t}{2}\right)\|y^t-y^{t-1}\|^2}\\
\myeqc &~\tsy{\g_s (m-1) \la \bfA (\Bx^s - \Bx^{s-1}),y^*-y^s\ra-\tfrac{(m-1)\g_s\eta_s}{2}\|x_{i_s}^s-x_{i_s}^{s-1}\|^2},
\end{align*}
where (a) follows from regrouping the terms;
(b) follows from the definition $\bar{A} = \max_{i\in [m-1]} \|\bfA _{i}\|$ and the simple relation that $b\la u, v\ra-a\|v\|^2/2 \le b^2\|u\|^2/(2a)$, $\forall a >0$;
and (c) follows from \eqref{c:ss2} and \eqref{c:ss5}.

By combining the relation above with \eqref{eqn:Q2}, we obtain 
\begin{align}\label{c:gap3}
0\leq 
&~\tsy{\Eb_s\left[\tfrac{\g_1((m-1)\eta_1+(m-2)\mu)}{2}\|\Bx^*-\Bx^0\|^2 - \tfrac{(m-1)\g_s(\eta_s+\mu)}{2}\|\Bx^*-\Bx^s\|^2\right]}\nn\\
&~+ \tsy{\Eb_s\left[{\g_1\tau_1}V_{  h}(y^*,y^{0}) -{\g_s(\tau_s+1)}V_{  h}(y^*,y^s)\right]}\nn\\
&~+ \tsy{\Eb_s\left[\g_s (m-1) \la \bfA (\Bx^s - \Bx^{s-1}),y^*-y^s\ra-\tfrac{(m-1)\g_s\eta_s}{2}\|x_{i_s}^s-x_{i_s}^{s-1}\|^2\right]}.
\end{align} 
Notice the fact that 
\begin{align}
 &~\tsy{\Eb_s\left[\tfrac{\g_s(\tau_s+1)}{2}V_{  h}(y^*,y^s)-\g_s (m-1) \la \bfA (\Bx^s - \Bx^{s-1}),y^*-y^s\ra+\tfrac{(m-1)\g_s\eta_s}{2}\|x_{i_s}^s-x_{i_s}^{s-1}\|^2\right]}\nn\\
 &~ =\tsy{\Eb_s\left[\tfrac{\g_s(\tau_s+1)}{2}V_{  h}(y^*,y^s)-\g_s (m-1) \la \bfA (x_{i_s}^s - x_{i_s}^{s-1}),y^*-y^s\ra+\tfrac{(m-1)\g_s\eta_s}{2}\|x_{i_s}^s-x_{i_s}^{s-1}\|^2\right]}\nn\\
&~ \geq \tsy{\g_s\Eb_s\left[\tfrac{(\tau_s+1)\bar\mu}{4}\|y^*-y^s\|^2- (m-1)\bar{A}\|x_{i_s}^s - x_{i_s}^{s-1}\|\|y^*-y^s\|+\tfrac{(m-1)\eta_s}{2}\|x_{i_s}^s-x_{i_s}^{s-1}\|^2\right]}\nn\\
 &~ \geq \tsy{\g_s\Eb_s\left(\sqrt{\tfrac{(m-1)(\tau_s+1 )\bar\mu\eta_s}{2}}-(m-1)\bar{A}\right)\|x_{i_s}^s - x_{i_s}^{s-1}\|\|y^*-y^s\|\geq 0}.\label{c:gap4}
\end{align}
In view of \eqref{c:gap3} and \eqref{c:gap4}, we complete the proof.
\end{proof}

\end{document}